\newtheorem{thm}{Theorem}[section]
\newtheorem{coro}[thm]{Corollary}
\newtheorem{lema}[thm]{Lemma}
\newtheorem{propo}[thm]{Proposition}
\newtheorem{remark}[thm]{Remark}
\numberwithin{equation}{section}
\DeclareMathOperator{\dist}{dist}
\DeclareMathOperator{\domain}{Dom}
\renewcommand{\a}{\alpha}
\renewcommand{\b}{\beta}
\author[\'O. Ciaurri]{\'Oscar Ciaurri}
\address{Departamento de Matem\'aticas y Computaci\'on,
         Universidad de La Rioja \newline
         26004 Logro\~no, Spain}
\email{oscar.ciaurri@unirioja.es}
\author[A. Nowak]{Adam Nowak}
\address{Institute of Mathematics,
Polish Academy of Sciences \newline
\'Sniadeckich 8,
00-656 Warszawa, Poland}
\email{adam.nowak@impan.pl}
\author[L. Roncal]{Luz Roncal}
\address{BCAM - Basque Center for Applied Mathematics, \newline
Alameda de Mazarredo 14,
48009 Bilbao, Spain}
\email{lroncal@bcamath.org}
\thanks{The first and third-named authors were supported by the grant MTM2015-65888-C4-4-P
from Spanish Government. The third-named author was also supported by the Basque Government through the
BERC 2014--2017 program and by Spanish Ministry of Economy and Competitiveness MINECO:
BCAM Severo Ochoa excellence accreditation SEV-2013-0323.}
\keywords{Spherical Radon transform, spherical mean, radial function, Hankel transform,
	Legendre function, kernel estimate, mixed norm estimate, two-weight estimate, wave equation,
	Euler-Poisson-Darboux equation, axially symmetric solution, Strichartz estimate}
\subjclass[2010]{Primary: 44A12; Secondary: 42B37, 35L15, 35B07, 35L05,35Q05.}
\begin{document}
%%%%%%%%%%%%%%%%%%%%%%%%%%%%%%%%%%%%%%%%%%%%%%%%%%%%%%%%%%%%%%

%%%%%%%%%%%%%%%%%%%%%%   TITLE AND ABSTRACT   %%%%%%%%%%%%%%%%
\title[Mixed norm inequalities for spherical means]{Two-weight mixed norm estimates for\\
	a generalized spherical mean Radon transform \\ acting on radial functions}
%%%%%%%%%%%%%%%%%%%%%%%%%%%%%%%%%%%%%%%%%%%%%%%%%%%%%%%%%%%%%%

%%%%%%%%%%%%%%%%%%%%%%%%%%%%%%%%%%%%%%%%%%%%%%%%%%%%%%%%%%%%%%
\begin{abstract}
We investigate a generalized spherical means operator,
viz.\ generalized spherical mean Radon transform, acting on radial functions.
We establish an integral representation of this operator and find precise
estimates of the corresponding kernel.
As the main result, we prove two-weight mixed norm estimates for the integral operator, with
general power weights involved. This leads to weighted Strichartz type estimates for solutions
to certain Cauchy problems for classical Euler-Poisson-Darboux and wave equations with radial initial data.
\end{abstract}
%%%%%%%%%%%%%%%%%%%%%%%%%%%%%%%%%%%%%%%%%%%%%%%%%%%%%%%%%%%%%%

\maketitle

%%%%%%%%%%%%%%%%%%%%%%%%%%%%%%%%%%%%%%%%%%%%%%%%%%%%%%%%%%%%%%
\section{Introduction and preliminaries}
%%%%%%%%%%%%%%%%%%%%%%%%%%%%%%%%%%%%%%%%%%%%%%%%%%%%%%%%%%%%%%

The spherical mean Radon transform in $\mathbb{R}^n\times \mathbb{R}_+$, $n \ge 1$, of a suitable function
$f$ is given by
$$
Mf(x,t) = \int_{S^{n-1}} f(x-ty)\, d\sigma(y), \qquad (x,t) \in \mathbb{R}^n \times \mathbb{R}_+,
$$
where $d\sigma$ is the normalized (probabilistic) measure on the unit sphere $S^{n-1} \subset \mathbb{R}^n$.
This operator returns the mean value of $f$ on the sphere centered at $x$ and of radius $t$.

The spherical means $M$ are of great importance in analysis and have been widely studied, due to
interest on their own right, their connections with a number of classical initial-value PDE problems,
as well as applications in physical/practical problems. The latter pertains to thermoacoustic and
photoacoustic tomography, among others, where inverse problems for $M$ play a crucial role.
A restriction of the action of $M$ to functions radially symmetric in the spatial variable is still
of interest for the very same reasons as just mentioned, but in various aspects admits more explicit
analysis and therefore, in this situation, one is able to obtain sharper or even optimal results.
It is by no means possible to give here a reasonably complete account of the results on $M$ obtained so far.
Thus we limit ourselves to mention only a few of them that inspired this work.

The study of $L^p$ estimates for the maximal operator $M^*f = \sup_{t>0}|Mf(\cdot,t)|$ was initiated
by Stein \cite{Stein}. He proved that $M^*$ is bounded on $L^p(\mathbb{R}^n)$ if and only if
$p > n/(n-1)$, provided that $n \ge 3$. Later Bourgain \cite{Bourgain} showed that the same holds
for $n=2$. Duoandikoetxea and Vega \cite{DuoVega} investigated weighted inequalities for $M^*$ and its
dyadic variant. Restricting to radial functions, Leckband \cite{Leckband} proved an endpoint result for
$M^*$, namely that it is of restricted weak type $(p,p)$ for $p=n/(n-1)$, $n \ge 2$. Recently, still
in the radial case, Duoandikoetxea, Moyua and Oruetxebarria \cite{DuoMoyuaOrue} obtained weighted
estimates for $M^*$ that are sharp for power weights.

A generalization of $M$ arises naturally in connection with a Cauchy problem for the classical
Euler-Poisson-Darboux (EPD in short) equation, see e.g.\ \cite{We,Brest} and references therein.
One considers the transformation
$$
M^{\b}f(x,t) = \mathcal{F}^{-1}\big( m_{\b}(t|\cdot|) \mathcal{F}f\big)(x),
$$
where $\mathcal{F}$ is the Fourier transform in $\mathbb{R}^n$ and the function defining the multiplier is
given by
$$
m_{\b}(s) = 2^{\b+n/2-1} \Gamma(\b+n/2) \frac{J_{\b+n/2-1}(s)}{s^{\b+n/2-1}}, \qquad s > 0,
$$
with $J_{\nu}$ denoting the (oscillating) Bessel function of the first kind and order $\nu$.
Given $t>0$, the operator $f \mapsto M^{\b}f(\cdot,t)$ extends meromorphically to all complex
$\b$ with poles at $\b = -n/2, -n/2-1, -n/2-2, \ldots$. 
For $\b=0$ one recovers the spherical means, i.e.\ $M^0=M$.
When $\b>0$, there is an integral representation (cf.\ \cite[p.\,171]{SW})
$$
M^{\b}f(x,t) = \frac{\Gamma(\b+n/2)}{\pi^{n/2}\Gamma(\b)} \int_{|y|<1} \big( 1-|y|^2\big)^{\b-1} f(x-ty)\, dy,
$$
the integration being over the unit ball in $\mathbb{R}^n$. Moreover, one can represent $M^{\b}$ in terms of $M$
via an Erd\'elyi-Kober fractional integral, see \cite{Rubin} for details.

Essentially the same generalization $M^{\b}$ was considered by Stein \cite{Stein}, where he obtained $L^p$
norm estimates for the associated maximal operator. In the same paper, Stein brings to readers' attention an important
link between $M^{\b}$ and a Cauchy problem for the classical wave equation.
Noteworthy, any solution to a general initial-value problem for the wave equation can be expressed
in terms of $M^0=M$ and its time derivatives only, at least in odd dimensions $n \ge 3$,
see e.g.\ \cite{CoCoSt} and references given there.

The aim of this paper is to study $M^{\b}$ acting on radially symmetric functions.
Such a transformation can be viewed as a family of operators $\{\mathcal{M}_t^{\a,\b}: t >0\}$
($\a$ being a parameter depending on $n$, to be specified in a moment) acting on profile functions
defined on $\mathbb{R}_+$. Then $\mathcal{M}_t^{\a,\b}$ can be expressed
in terms of the (modified) Hankel transform $\mathcal{H}_{\a}$. The latter is defined for $\a>-1$ and
suitable functions $f$ on $\mathbb{R}_+$ by
$$
\mathcal{H}_{\a}f(x) = \int_{0}^{\infty} f(y) \frac{J_{\a}(xy)}{(xy)^{\a}}\, d\mu_{\a}(y), \qquad x > 0,
$$
where $d\mu_{\a}(y) = y^{2\a+1}dy$. It is well known that $\mathcal{H}_{\a}$ extends to an isometry on
$L^2(\mathbb{R}_+,d\mu_{\a})$ which satisfies $\mathcal{H}_{\a}^{-1} = \mathcal{H}_{\a}$.
For $\a = n/2-1$ the (modified) Hankel transform corresponds to the Fourier transform in $\mathbb{R}^n$
acting on radial functions. Thus (for suitable $f$)
\begin{equation} \label{Handef}
\mathcal{M}_t^{\a,\b}f = \mathcal{H}_{\a}\big( m_{\a,\b}(t\cdot) \mathcal{H}_{\a}f\big), \qquad t >0,
\end{equation}
where the Hankel multipliers are defined by means of the function
$$
m_{\a,\b}(s) = 2^{\a+\b}\Gamma(\a+\b+1)\frac{J_{\a+\b}(s)}{s^{\a+\b}}, \qquad s > 0,
$$
and $\a$ has the form $\a=n/2-1$, $n \ge 1$. However, from analytic point of view,
there is no reason for restricting to the discrete set of $\a$. Accordingly, in what follows we allow a
continuous range $\a > -1$, actually the largest possible so that the Hankel transform is defined on the whole
$L^2(\mathbb{R}_+,d\mu_{\a})$.
There is also a deeper motivation for considering general $\a$, and this is related to certain
PDE problems involving Bessel operators rather than the standard Laplacian.
We shall always require \eqref{Handef} to be well defined on $L^2(\mathbb{R}_+,d\mu_{\a})$. This
happens exactly when $\a+\b \ge -1/2$, that is when $m_{\a,\b}$ is bounded (this fact follows from
basic asymptotics for the Bessel function, see below). If this is the case, then $\mathcal{M}_t^{\a,\b}$,
$t > 0$, are (uniformly) bounded operators on the $L^2$ space.

Our principal objective is to prove two-weight mixed norm $L^p-L^q(L^r_t)$ estimates 
for $\mathcal{M}_t^{\a,\b}$ with
possibly large classes of power weights admitted and possibly wide ranges of the parameters involved.
This is motivated by the limiting case $r=\infty$ corresponding to the maximal operator
$f \mapsto \sup_{t>0}|\mathcal{M}_t^{\a,\b}f|$ and the related investigations in
\cite{Stein,Bourgain,DuoVega,Leckband,DuoMoyuaOrue}.
However, comparing to $1 \le r < \infty$ the case $r=\infty$ requires a different and in fact more subtle
approach, therefore it will be treated in a separate paper.

For technical reasons it is much more convenient to work with an integral operator
$M_t^{\a,\b}$ that agrees with $\mathcal{M}_t^{\a,\b}$ in $L^2(\mathbb{R}_+,d\mu_{\a})$.
Thus our strategy is to switch to $M_t^{\a,\b}$ and then find precise estimates of the associated integral
kernel $K_t^{\a,\b}(x,z)$ in order to enable a direct and explicit analysis of the operator.
The latter relies on estimating first the norm of the kernel in power-weighted $L^r(dt)$, $1\le r < \infty$
and then showing two-weight mixed norm estimates for the resulting integral operator independent of the `time' variable $t$.

As illustrative applications of the mixed norm inequalities obtained, we derive weighted Strichartz type
estimates for solutions of certain initial-value problems for the EPD and wave equations, as well
as similar differential problems based on the one-dimensional Bessel operator
$L_{\a}=\frac{d^2}{dx^2} + \frac{2\a+1}x\frac{d}{dx}$. Such results seem to be desirable from the PDE theory
perspective.

An interesting aspect of our research is the behavior of the kernel $K_t^{\a,\b}(x,z)$. Perhaps a bit
surprisingly, there are half-lines and segments in the $(\a,\b)$ plane, where a kind of phase
shift occurs. More precisely, the behavior of the kernel is essentially different when $(\a,\b)$
belongs to those singular sets, comparing to the behavior in neighborhoods of those sets.
This phenomenon makes statements of the kernel estimates somewhat complicated.
Actually, something similar happens also in case of certain asymptotics for the Legendre functions
through which we express the kernel. The literature seems to tacitly omit those `singular' asymptotics.
This led us to derive them by ourselves, by means of combining various known facts and some computations.
Another topic that seems not to be covered properly by (at least standard) literature are zeros
of Legendre functions. Here we also had to work a bit by ourselves to derive what was needed for purposes
of this paper.

\subsection{Integral representation of the radial spherical means $\mathcal{M}_t^{\a,\b}$}
It turns out that $\mathcal{M}_t^{\a,\b}$ can be represented as a standard integral operator provided
that $\a+\b > -1/2$. In case $\a+\b=-1/2$ there is a singular integral representation, which is much more
subtle and not treated in this paper.

Define the kernel
$$
K^{\a,\b}_t(x,z)=2^{\a+\b}\Gamma(\a+\b+1) \int_0^{\infty}\frac{J_{\a+\b}(ty)}{(ty)^{\a+\b}}
\frac{J_{\a}(xy)}{(xy)^{\a}}\frac{J_{\a}(zy)}{(zy)^{\a}}d\mu_{\a}(y).
$$
As we shall see, $K_t^{\a,\b}(x,z)$ is well defined for $\a > -1$, $\a+\b > -1/2$, $t>0$ and (in general)
$t \neq |x-z|,t\neq x+z$. The integral here converges absolutely when $\a+\b > 1/2$, otherwise the
convergence at $\infty$ is only conditional, in the Riemann sense.
Notice that the kernel is homogeneous in the sense that
\begin{equation} \label{homo}
K_t^{\a,\b}(x,z) = \frac{1}{s^{2\a+2}} K^{\a,\b}_{t/s}\Big(\frac{x}s,\frac{z}s\Big), \qquad s,t,x,z > 0.
\end{equation}
This property is of importance
from the point of view of analysis related to the operator we now define.

For each $t>0$ consider the integral operator
$$
M_t^{\a,\b}f(x)=\int_0^{\infty}K^{\a,\b}_t(x,z)f(z)\, d\mu_{\a}(z)
$$
on its natural domain $\domain M_t^{\a,\b}$ consisting of all those $f$ for which the above integral
converges absolutely for a.a.\ $x>0$. The following result gives a preliminary link between
$\mathcal{M}_t^{\a,\b}$ and $M_t^{\a,\b}$.
\begin{propo} \label{prop:coinc}
Let $\a > -1$ and $\a+\b > -1/2$. Then, for each $t > 0$,
$C_c^{\infty}(0,\infty) \subset \domain M_t^{\a,\b}$ and
$$
\mathcal{M}_t^{\a,\b}f = M_t^{\a,\b}f, \qquad f \in C_c^{\infty}(0,\infty).
$$
\end{propo}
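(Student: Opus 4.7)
My plan is to reduce the claim to a Fubini-type interchange and then identify the resulting inner integral as $K_t^{\a,\b}(x,z)$. First I would check that for $f \in C_c^\infty(0,\infty)$ the function $F := \mathcal{H}_\a f$ is smooth on $(0,\infty)$ and rapidly decreasing at infinity; this is essentially the Hankel analogue of ``Schwartz $\to$ Schwartz,'' and follows by repeated integration by parts in the defining integral combined with the standard asymptotics of $J_\a$ (using that $f$ is supported away from $0$ and $\infty$). Since $m_{\a,\b}$ is bounded on $(0,\infty)$ (an immediate consequence of $\a+\b \ge -1/2$ together with the behavior of $J_{\a+\b}$ at $0$ and $\infty$), the product $m_{\a,\b}(t\cdot)F$ belongs to $L^1(d\mu_\a)$, so that
$$
\mathcal{M}_t^{\a,\b}f(x) = \int_0^\infty \frac{J_\a(xy)}{(xy)^\a}\, m_{\a,\b}(ty)\, F(y)\, d\mu_\a(y)
$$
as an absolutely convergent integral.

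Next I would insert a Gaussian convergence factor $e^{-\e y^2}$ ($\e > 0$) and substitute $F(y) = \int_0^\infty f(z) \frac{J_\a(yz)}{(yz)^\a}\, d\mu_\a(z)$. Because $f$ is compactly supported in $(0,\infty)$ and the remaining factors are $O(1)$ in $y$ on bounded sets with Gaussian decay at infinity, the triple integral is absolutely convergent and Fubini gives
$$
\int_0^\infty e^{-\e y^2} \frac{J_\a(xy)}{(xy)^\a}\, m_{\a,\b}(ty)\, F(y)\, d\mu_\a(y) = \int_0^\infty f(z)\, K_t^{\a,\b,\e}(x,z)\, d\mu_\a(z),
$$
where $K_t^{\a,\b,\e}(x,z)$ is defined exactly like $K_t^{\a,\b}(x,z)$ but with an extra factor $e^{-\e y^2}$ in the integrand.

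Finally I would pass to the limit $\e \to 0^+$. The left-hand side converges to $\mathcal{M}_t^{\a,\b}f(x)$ by dominated convergence, with majorant $|m_{\a,\b}(t\cdot)F| \in L^1(d\mu_\a)$. For each fixed $x > 0$ and every $z > 0$ outside the finite exceptional set $\{|x-t|,\,x+t\}$, an Abelian argument applied to the conditionally convergent oscillatory integral gives $K_t^{\a,\b,\e}(x,z) \to K_t^{\a,\b}(x,z)$. To conclude that the right-hand side tends to $\int_0^\infty f(z) K_t^{\a,\b}(x,z)\, d\mu_\a(z)$, and simultaneously that $f \in \domain M_t^{\a,\b}$, I need an $\e$-uniform majorant for $K_t^{\a,\b,\e}(x,\cdot)$ that is integrable against $|f|$.

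The main obstacle is precisely this $\e$-uniform kernel bound: when $\a+\b$ is close to $-1/2$ the defining $y$-integral is only conditionally convergent, so a termwise majorant inside the integral is not available. I expect this bound to follow from (or to be derived in parallel with) the precise estimates for $K_t^{\a,\b}$ that the paper develops as its principal technical ingredient; alternatively one may replace $e^{-\e y^2}$ by the sharp cutoff $\chi_{(0,1/\e)}(y)$ and control the tail of the kernel directly via integration by parts against the oscillatory factors $J_\a(xy)$, $J_\a(zy)$, $J_{\a+\b}(ty)$.
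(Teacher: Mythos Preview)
Your regularization approach is different from the paper's, and the gap you flag at the end is real and not closed by the forward reference you suggest. The paper's kernel estimates (Theorems~\ref{thm:est_ker} and~\ref{thm:kerest2}) concern $K_t^{\a,\b}$ itself, not a Gaussian-damped variant $K_t^{\a,\b,\e}$; nothing in the paper gives an $\e$-uniform bound on the latter, and extracting one would require essentially the same oscillatory-integral analysis that the paper carries out anyway. So invoking those estimates here is circular in spirit, even if not in logic.

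The paper avoids regularization altogether. After writing the double integral as you do, it splits the $y$-integration at $y=1$; on the tail $\int_1^\infty$ each Bessel function is expanded via the large-argument asymptotic $J_\nu(w)=\sqrt{2/(\pi w)}\cos(w-c_\nu)+\mathcal{O}(w^{-3/2})$. Expanding the triple product yields eight terms, seven of which produce absolutely convergent double integrals (Fubini applies). In the one remaining term the product of three cosines is rewritten as a sum of four cosines of linear forms in $t,x,z$, and the task reduces to showing that
$$
H_N(z)=\int_1^N \frac{\cos\big(y(z-D)+C\big)}{y^{\a+\b+1/2}}\,dy
$$
has an $N$-uniform majorant integrable in $z$ over the support $[A,B]$ of $f$. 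A change of variable gives $|H_N(z)|\lesssim |z-D|^{\a+\b-1/2}$ when $\a+\b<1/2$, and $|H_N(z)|\lesssim 1+\log^+\frac{1}{|z-D|}$ when $\a+\b=1/2$; both are locally integrable, so dominated convergence finishes the interchange. This is exactly your ``sharp cutoff'' alternative, carried out in full; what the asymptotic decomposition buys is that the uniform bound is needed only for a completely explicit cosine integral rather than the full triple-Bessel integral.
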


\begin{proof}
We shall use the basic asymptotics for the Bessel function,
$$
J_{\a}(u) \simeq u^{\a}, \quad u \to 0^+, \qquad J_{\a}(u) = \mathcal{O}(u^{-1/2}), \quad u \to \infty,
$$
together with the following fact: if $g = \mathcal{H}_{\a}f$ for some $f \in C_c^{\infty}(0,\infty)$,
then $g$ is continuous, $g(u) = \mathcal{O}(1)$ as $u \to 0^+$ and, for any fixed $k$,
$g(u) = \mathcal{O}(u^{-k})$ as $u \to \infty$; see \cite[Section 2.1]{NoSt}.

Let $f \in C_c^{\infty}(0,\infty)$.
Since, in view of what was said above,
$m_{\a,\b}(t\cdot) \mathcal{H}_{\a}f \in L^1(d\mu_{\a})$, and also $f \in L^1(d\mu_{\a})$,
we can write
\begin{align} \nonumber
\frac{\mathcal{M}_t^{\a,\b}f(x)}{2^{\a+\b}\Gamma(\a+\b+1)} & = \int_0^{\infty} \frac{J_{\a}(xy)}{(xy)^{\a}}
	\frac{J_{\a+\b}(ty)}{(ty)^{\a+\b}} \mathcal{H}_{\a}f(y)\, d\mu_{\a}(y) \\
& = \int_0^{\infty}\int_0^{\infty} \frac{J_{\a+\b}(ty)}{(ty)^{\a+\b}}
\frac{J_{\a}(xy)}{(xy)^{\a}}\frac{J_{\a}(zy)}{(zy)^{\a}} \, f(z)\, d\mu_{\a}(z)\, d\mu_{\a}(y),
\qquad t,x > 0. \label{o1}
\end{align}
The proof will be finished once we show that changing the order of integration in \eqref{o1} is legitimate.
It is easily seen that this is indeed the case when $\a+\b > 1/2$, since then the last double integral
converges absolutely and one can use Fubini's theorem. However, in case $\a+\b \le 1/2$ the situation
is more delicate because the integral defining $K_t^{\a,\b}(x,z)$ converges in the Riemann sense, but
not absolutely.

To proceed, we assume that $t,x>0$ are fixed, and $f$ is also fixed and its support is contained in
an interval $[A,B]$ with $0 < A < B < \infty$. Splitting the outer integral in \eqref{o1} we reduce the
problem to switching the order integration in
$$
\int_1^{\infty}\int_A^B \frac{J_{\a+\b}(ty)}{(ty)^{\a+\b}}
\frac{J_{\a}(xy)}{(xy)^{\a}}\frac{J_{\a}(zy)}{(zy)^{\a}} \, f(z)\, d\mu_{\a}(z)\, d\mu_{\a}(y).
$$

Next, we expand each of the three Bessel functions according to the large argument asymptotics,
see \cite[Chapter VII, \S 7$\cdot$21(1)]{Wat},
$$
J_{\nu}(w) = \sqrt{\frac{2}{\pi w}} \cos\bigg( w - \frac{\pi}2\Big(\nu+\frac{1}2\Big)\bigg) +
	\mathcal{O}\big(w^{-3/2}\big),
$$
valid for positive $w$ separated from $0$. This leads to a splitting of the integrand into
8 terms. All the resulting double integrals converge absolutely, except for one,
written here up to some factors that can be neglected in further analysis:
$$
\int_1^{\infty}\int_A^B \frac{\cos(ty-c_1) \cos(xy-c_2) \cos(zy-c_2)}{y^{\a+\b+1/2}} f(z)\, dz\, dy.
$$

In the last expression we can write the outer integral as a limit of integrals over bounded intervals
and exchange the order of integration due to absolute integrability. This means that our task reduces
to checking that one can pass with the limit under the integral sign in
$$
\lim_{N \to \infty} \int_A^B G_N(z) f(z)\, dz,
$$
where
$$
G_N(z) = \int_1^{N} \frac{\cos(ty-c_1) \cos(xy-c_2) \cos(zy-c_2)}{y^{\a+\b+1/2}}\, dy.
$$
If we now show that the sequence $\{|G_N|\}$ is controlled by an integrable function over $[A,B]$, then
the desired conclusion will follow from the dominated convergence theorem.

To continue, we invoke the product-to-sum formula
$$
\cos \theta_1 \cos\theta_2 \cos\theta_3
= \frac{1}{8} \sum_{(e_1,e_2,e_3) \in \{-1,1\}^3} 
\cos(e_1\theta_1+e_2\theta_2+e_3\theta_3)
$$
getting
\begin{align*}
& 4 \cos(ty-c_1) \cos(xy-c_2) \cos(zy-c_2) \\
& \quad = \cos\big( y(t+x+z)-c_1-2c_2\big) + \cos\big(y(t+x-z)-c_1\big) \\
& \qquad + \cos\big(y(t-x+z)-c_1\big) + \cos\big(y(x+z-t)+c_1-2c_2\big).
\end{align*}
We now see that it is enough to verify the existence of an integrable over $[A,B]$ majorant of 
$\{|H_N(z)|\}$, where the new sequence is of the form
$$
H_N(z) = \int_1^N \frac{\cos\big( y(z-D)+C\big)}{y^{\lambda}} \, dy,
$$
with $0 < \lambda \le 1$ fixed and $C,D \in \mathbb{R}$ also fixed
(of course, it may happen that $D \in [A,B]$).

We treat here a simplified model situation, which contains the heart of the matter.
The general case requires then some elementary technical adjustments, which are left to the reader.
Let $A=C=D=0$. Then, changing the variable of integration, we get
$$
H_N(z) = \int_1^{N} \frac{\cos(yz)}{y^{\lambda}}\, dy 
	= z^{\lambda-1} \int_z^{Nz} \frac{\cos s}{s^{\lambda}}\, ds.
$$
If $\lambda < 1$, the last integral stays bounded when $N$ and $z$ vary,
since $s^{-\lambda}\cos s$ is integrable over $(0,\infty)$ (at $\infty$ in the Riemann sense only).
Thus we see that the required majorant is $H(z) = c z^{\lambda-1}$.
When $\lambda =1$ we split the last integral with respect to the point $1 \wedge (Nz)$ and then easily
see that in this case the majorant is $H(z) = c (1+\log^+\frac{1}z)$.
\end{proof}

Later, in Section \ref{sec:L2}, we will see that for each $t>0$, 
$L^2(\mathbb{R}_+,d\mu_{\a}) \subset \domain M_t^{\a,\b}$ and $M_t^{\a,\b}$
is bounded on $L^2(\mathbb{R}_+,d\mu_{\a})$. This together with Proposition \ref{prop:coinc} implies
that $\mathcal{M}_t^{\a,\b}$ and $M_t^{\a,\b}$ coincide as operators acting on $L^2(\mathbb{R}_+,d\mu_{\a})$.

%%%
\subsection{Structure of the paper and notation}
The rest of the paper is organized as follows. In Section \ref{sec:estI},
to feel flavor of the problem and gain a better intuition, we find sharp estimates of
the integral kernel of $M_t^{\a,\b}$ in the uncomplicated case when $\a > -1/2$ and $\b > 0$, see Theorem \ref{thm:est_ker}.
This is done by employing a relatively simple positive integral representation for the triple Bessel
function integral entering the kernel. Then, with the aid of sharp bounds for certain elementary integrals
(see Lemmas \ref{est_I} and \ref{est_J}), the result follows in a rather straightforward manner.
We also look at a few special cases of the parameters $\a,\b$ in which the kernel $K_t^{\a,\b}(x,z)$ is
totally computable. This reveals, in particular, that one has to be careful when it comes to values
of the kernel related to the singular surfaces $t=|x-z|$ and $t=x+z$, even if those values are finite.
In Section \ref{sec:estII} we estimate the kernel $K_t^{\a,\b}(x,z)$ in the general case when $\a > -1$
and $\b > -\a-1/2$. Here the strategy is to express the triple Bessel function integral via suitable
Legendre functions and then estimate the resulting expressions by means of Legendre functions asymptotics.
Since the latter seem to be incomplete, at least in a standard literature, we derive the missing cases
by ourselves, using known facts and formulas and explicit computations. Another important issue
we study in this section is presence or lack of zeros of the Legendre functions, since in the latter
case the estimates of the kernel we get are in fact sharp. Our main result on the behavior of
$K_t^{\a,\b}(x,z)$ is stated in Theorem \ref{thm:kerest2}.
In Section \ref{sec:L2} we prove that for each $t > 0$ the integral
operator $M_t^{\a,\b}$ is well defined and bounded on $L^2(\mathbb{R}_+,d\mu_{\a})$. Consequently, the $L^2$-coincidence
between $\mathcal{M}_t^{\a,\b}$ and $M_t^{\a,\b}$ is established, see Proposition \ref{cor:L2}.
In Section \ref{sec:test} we estimate the norm of the kernel $K^{\a,\b}_t(x,z)$ in power-weighted
$L^r(dt)$. The bounds we get are fairly precise in general, and sharp in many cases;
see Theorem \ref{thm:norm_est}. 
In Section \ref{sec:mixed} we state the main result of the paper, that is the two-weight mixed norm
estimate for $M_t^{\a,\b}$ which is contained in Theorem \ref{thm:main}.
This is preceded by a sharp analysis of an auxiliary integral operator emerging from the precise
absolute estimates for the kernel $K_t^{\a,\b}(x,z)$ obtained previously in Theorem \ref{thm:kerest2}.
Finally, Section \ref{sec:appl} is devoted to applications of the mixed norm estimates.
These pertain to weighted Strichartz type estimates for solutions to certain radial initial-value
problems for classical Euler-Poisson-Darboux and wave equations, as well as Bessel operator based
counterparts of these equations.

For readers' convenience we enclose a detailed table of contents and a list of figures.
\tableofcontents
\listoffigures
Throughout the paper we use a fairly standard notation. Thus $\mathbb{R}_+=(0,\infty)$.
The symbols ``$\vee$'' and ``$\wedge$'' mean the operations of taking maximum and minimum, respectively.
We write $X\lesssim Y$ to indicate that $X\leq CY$ with a positive constant $C$
independent of significant quantities. We shall write $X \simeq Y$ when simultaneously
$X \lesssim Y$ and $Y \lesssim X$.

For the sake of brevity, we shall omit $\mathbb{R}_+$ when denoting $L^p$ spaces related to the measure space
$(\mathbb{R}_+,d\mu_{\a})$.
Given a non-negative weight $w$, we denote by $L^p(w^pd\mu_{\a})$ the weighted $L^p$ space with respect
to the measure $\mu_{\a}$. This means that $f \in L^p(w^pd\mu_{\a})$ if and only if $wf \in L^p(d\mu_{\a})$.
By convention, $L^{\infty}(w^{\infty}d\mu_{\a})$ consists of all measurable functions $f$ such that
$wf$ is essentially bounded on $\mathbb{R}_+$ and the norm of $f$ in that space is $\|wf\|_{\infty}$.
We write $L^p_{\textrm{rad}}(\ldots)$ for the subspace of $L^p(\ldots)$ consisting of radial functions.
As usual, for $1 \le p \le \infty$, $p'$ denotes its conjugate exponent, $1/p+1/p'=1$.

%%%%%%%%%%%%%%%%%%%%%%%%%%%%%%%%%%%%%%%%%%%%%%%%%%%%%%%%%%%%%%
\section{Pointwise kernel estimates I: a special case} \label{sec:estI}
%%%%%%%%%%%%%%%%%%%%%%%%%%%%%%%%%%%%%%%%%%%%%%%%%%%%%%%%%%%%%%

In this section we prove, by elementary methods, sharp estimates of the kernel $K_t^{\a,\b}(x,z)$
in case $\a > -1/2$ and $\b > 0$.
For such parameters the kernel is given by means of a well-studied generalization of the
Weber-Schafheitlin integral. More precisely,
formula \cite[Chapter XII, \S13$\cdot$46(1)]{Wat} implies, for $t,x,z > 0$ such that 
$t \neq |x-z|$ and $t\neq x+z$,
\begin{equation} \label{ifK}
K_t^{\a,\b}(x,z)=c_{\a,\b}\,t^{-2(\a+\b)}\int_0^A \big(t^2-x^2-z^2+2xz\cos\theta\big)^{\b-1}
	\sin^{2\a}\theta\,d\theta.
\end{equation}
Here $c_{\a,\b} = 2\Gamma(\a+\b+1)/(\sqrt{\pi}\Gamma(\a+1/2)\Gamma(\b))$ and
$$
A=\begin{cases}
	0, & \quad t < |x-z|,\\
	\arccos \Big(\frac{x^2+z^2-t^2}{2xz}\Big), & \quad |x-z| < t < x+z,\\
	\pi, & \quad t > x+z.
	\end{cases}
$$
Notice that for $\a$ and $\b$ under consideration $K_t^{\a,\b}(x,z)$ is non-negative,
$K_t^{\a,\b}(x,z) = 0$ if $t < |x-z|$, and $K_t^{\a,\b}(x,z)>0$ when $t > |x-z|$. 

\subsection{Two simple technical results}
We need precise estimates of the following integrals:
\begin{align*}
I_{\a,\gamma}(B) & := \int_{-1}^1 (1-Bs)^{\gamma} (1-s^2)^{\a-1/2}\, ds, \qquad 0 \le B \le 1,\\
J_{\a,\b,\gamma}(D) & := \int_0^1 (D-s)^{\a-1/2} (1-s)^{\b-1} s^{\gamma} \, ds, \qquad D \ge 1.
\end{align*}

\begin{lema} \label{est_I}
Let $\a > -1/2$ and $\gamma \in \mathbb{R}$ be fixed. Then
\begin{equation*}
I_{\a,\gamma}(B)\simeq 
	\begin{cases}
		(1-B)^{\a+\gamma+1/2}, & \quad \a+\gamma + 1/2 < 0,\\
		1+\log\frac{1}{1-B}, & \quad \a+\gamma + 1/2 = 0,\\
		1, & \quad \a+\gamma+1/2 > 0,
	\end{cases}
\end{equation*}
uniformly in $0 \le B \le 1$.
\end{lema}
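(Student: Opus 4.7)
The plan is to localize the integral near the singularity at $s=1$, where $(1-Bs)^{\gamma}$ can become large when $B \to 1^-$, and reduce to a one-variable elementary split integral depending on the sign of $\delta:=\a+\gamma+1/2$.

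First, I would dispose of the easy range $0 \le B \le 1/2$. There $1-Bs \ge 1/2$ for all $s \in [-1,1]$, so $(1-Bs)^{\gamma}\simeq 1$ uniformly in $B$, and since $\a>-1/2$ the integrand $(1-s^2)^{\a-1/2}$ is integrable on $[-1,1]$. This gives $I_{\a,\gamma}(B)\simeq 1$, which is consistent with all three claimed cases because $1-B\simeq 1$ there. Next, I would split $\int_{-1}^1=\int_{-1}^0+\int_0^1$. For $s \in [-1,0]$ we have $1 \le 1-Bs \le 2$, hence $(1-Bs)^{\gamma}\simeq 1$, and the contribution is $\simeq \int_{-1}^0(1-s^2)^{\a-1/2}\,ds \simeq 1$.

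The heart of the matter is the piece $\int_0^1$ for $B \ge 1/2$. Here I would change variables $u=1-s$ and use $1-s^2=u(2-u)$ together with $(2-u)^{\a-1/2}\simeq 1$ on $[0,1]$, and also $1-Bs=(1-B)+Bu\simeq (1-B)+u=:\eta+u$ (since $B\simeq 1$). This reduces the problem to estimating
\begin{equation*}
\mathcal{I}(\eta):=\int_0^1(\eta+u)^{\gamma}u^{\a-1/2}\,du,\qquad \eta=1-B\in[0,1/2],
\end{equation*}
uniformly in $\eta$, and then showing $I_{\a,\gamma}(B)\simeq 1+\mathcal{I}(1-B)$.

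To handle $\mathcal{I}(\eta)$ I would split the integration at $u=\eta$. On $[0,\eta]$ one has $\eta+u\simeq \eta$, so this part contributes $\simeq \eta^{\gamma}\int_0^{\eta}u^{\a-1/2}\,du \simeq \eta^{\delta}$ (using $\a>-1/2$). On $[\eta,1]$ one has $\eta+u\simeq u$, so this part equals, up to constants,
\begin{equation*}
\int_{\eta}^1 u^{\delta-1}\,du\simeq
\begin{cases}
1, & \delta>0,\\
\log\frac{1}{\eta}, & \delta=0,\\
\eta^{\delta}, & \delta<0.
\end{cases}
\end{equation*}
Adding the two contributions and noting that for $\eta\in(0,1/2]$ one has $\eta^{\delta}\lesssim 1$ when $\delta>0$, $\eta^{\delta}=1$ when $\delta=0$, and $\eta^{\delta}\simeq \eta^{\delta}+\log(1/\eta)+1$ when $\delta<0$, produces exactly the three cases in the statement; combined with the $\simeq 1$ coming from $[-1,0]$ it also absorbs the trivial range $B\le 1/2$.

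I do not expect any serious obstacle; the computation is a textbook Laplace-type split. The only spot requiring care is uniformity as $B\to 1^-$ in the boundary case $\delta=0$ (making sure the $+1$ from $[-1,0]$ does not dominate the $\log\frac{1}{1-B}$ term, which is handled by the observation that the two are $\simeq 1+\log\frac{1}{1-B}$ together), and the verification that both sides of the claimed equivalence are simultaneously infinite at $B=1$ when $\delta\le 0$, so that the notation $\simeq$ is meaningful throughout $[0,1]$.
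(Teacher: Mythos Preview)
Your proposal is correct and follows essentially the same approach as the paper: localize near $s=1$, change variables to expose the scale $1-B$, and reduce to an elementary split integral. The only differences are organizational---the paper first separates $\gamma\ge 0$ from $\gamma<0$ and uses the substitution $s=1-\tfrac{1-B}{B}w$, whereas you substitute $u=1-s$ and then split at $u=\eta$; these are equivalent reparametrizations of the same computation.
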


The case $B=1$ in the statement of Lemma \ref{est_I} and in the proof below should be understood in
the usual limiting sense.

\begin{proof}[Proof of Lemma \ref{est_I}]
Assume, to begin with, that $\gamma \ge 0$. Then the essential contribution to $I_{\a,\gamma}(B)$
comes from integration between $-1$ and $0$. Therefore, taking into account that $1-Bs \simeq 1$
and $1-s \simeq 1$ when $-1 < s < 0$, we can write
$$
I_{\a,\gamma}(B) \simeq \int_{-1}^0 (1-Bs)^{\gamma} (1-s^2)^{\a-1/2}\, ds 
	\simeq \int_{-1}^0 (1+s)^{\a-1/2}\, ds \simeq 1.
$$
This agrees with the asserted estimate, since $\gamma \ge 0$ implies $\a + \gamma + 1/2 > 0$.

Assume next that $\gamma < 0$. Now the essential contribution to $I_{\a,\gamma}(B)$ comes from
integration between $0$ and $1$, and we have
\begin{equation} \label{c1}
I_{\a,\gamma}(B) \simeq \int_0^1 (1-Bs)^{\gamma} (1-s)^{\a-1/2}\, ds.
\end{equation}
When $\a+\gamma + 1/2 > 0$, we use the straightforward bounds
$$
\int_0^1 (1-s)^{\a-1/2}\, ds \lesssim I_{\a,\gamma}(B) \lesssim \int_0^1 (1-s)^{\a+\gamma -1/2}\, ds
$$
to conclude that $I_{\a,\gamma}(B) \simeq 1$. Thus it remains to treat the case $\a+\gamma + 1/2 \le 0$.
Here we may assume that $B > 1/2$, since otherwise $I_{\a,\gamma}(B) \simeq 1$, as needed.
Changing the variable of integration $s = 1-\frac{1-B}{B}w$ in \eqref{c1}, and remembering that now
$B \simeq 1$, we get
$$
I_{\a,\gamma}(B) \simeq (1-B)^{\a+\gamma+1/2} \int_0^{B/(1-B)} (1+w)^{\gamma} w^{\a-1/2}\, dw.
$$
Denoting by $\widetilde{I}_{\a,\gamma}(B)$ the last integral, we see that
$$
\widetilde{I}_{\a,\gamma}(B) \simeq 1 + \int_1^{B/(1-B)} w^{\a+\gamma-1/2}\, dw
	\simeq \begin{cases}
						1, & \quad \a+\gamma +1/2 < 0,\\
						1 + \log\frac{B}{1-B}, & \quad \a+\gamma + 1/2 = 0, 
					\end{cases}
$$
where in the log case $\frac{B}{1-B}$ may be replaced by $\frac{1}{1-B}$. The conclusion follows.
\end{proof}

\begin{lema} \label{est_J}
Let $\a \in \mathbb{R}$, $\b > 0$ and $\gamma > -1$ be fixed. Then
$$
J_{\a,\b,\gamma}(D) \simeq D^{\a-1/2}, \qquad D \ge 2,
$$
and
$$
J_{\a,\b,\gamma}(D) \simeq  \begin{cases}
															(D-1)^{\a+\b -1/2}, & \quad \a+\b < 1/2, \\
															1 + \log\frac{1}{D-1}, & \quad \a+\b = 1/2, \\
															1, & \quad \a+\b > 1/2,
														\end{cases}
$$
uniformly in $1 \le D < 2$.
\end{lema}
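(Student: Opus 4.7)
For the first bound, when $D\ge 2$, the factor $D-s$ is comparable to $D$ uniformly in $s\in[0,1]$, since $D/2\le D-1\le D-s\le D$. Hence
$$
J_{\a,\b,\gamma}(D)\simeq D^{\a-1/2}\int_0^1 (1-s)^{\b-1}s^\gamma\,ds,
$$
and the remaining integral is the beta function $B(\b,\gamma+1)$, which is a positive finite constant because $\b>0$ and $\gamma>-1$. This gives $J_{\a,\b,\gamma}(D)\simeq D^{\a-1/2}$.

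For $1\le D<2$ the strategy is to localize the essential mass of the integral near the endpoint $s=1$ and then rescale. First, the portion over $s\in[0,1/2]$ is bounded by a constant, since there $D-s\simeq 1$, $1-s\simeq 1$, and $\int_0^{1/2}s^\gamma\,ds<\infty$ because $\gamma>-1$. On $[1/2,1]$ the factor $s^\gamma$ is comparable to $1$, so the problem reduces to estimating
$$
\int_{1/2}^1 (D-s)^{\a-1/2}(1-s)^{\b-1}\,ds.
$$
I would substitute $u=1-s$ and then $u=\varepsilon w$ with $\varepsilon:=D-1\in[0,1)$, which turns this into
$$
\varepsilon^{\a+\b-1/2}\int_0^{1/(2\varepsilon)}(1+w)^{\a-1/2}w^{\b-1}\,dw.
$$

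Now split the inner integral at $w=1$. On $(0,1)$ it behaves like $\int_0^1 w^{\b-1}\,dw$, which is a positive constant since $\b>0$. On $(1,1/(2\varepsilon))$ the integrand is comparable to $w^{\a+\b-3/2}$, so the integral is bounded when $\a+\b<1/2$, equals $\log(1/(2\varepsilon))+O(1)$ when $\a+\b=1/2$, and is comparable to $\varepsilon^{-(\a+\b-1/2)}$ when $\a+\b>1/2$. Multiplying back by $\varepsilon^{\a+\b-1/2}$ yields exactly the three cases claimed, with $\varepsilon=D-1$.

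The main technical point is that the three regimes in $\a+\b$ must emerge from the correct interplay between the prefactor $\varepsilon^{\a+\b-1/2}$ and the growth of the truncated integral; this is already handled cleanly by the splitting at $w=1$ after rescaling. Boundary issues (the limit $D\to 1^+$ in the case $\a+\b<1/2$, and the possible blow-up at $s=1$ when $\b<1$) are absorbed by the substitution, since the resulting integrand in $w$ is integrable at $0$ precisely because $\b>0$. The argument at $D=2$ matches the two regimes, so the estimates patch consistently.
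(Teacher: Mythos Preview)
Your argument is correct and follows essentially the same route as the paper: split the integral at $s=1/2$, observe that the piece over $[0,1/2]$ is $\simeq 1$ (for $1\le D<2$) or $\simeq D^{\a-1/2}$ (for $D\ge 2$), and on $[1/2,1]$ perform the substitution $s=1-(D-1)w$ to obtain $(D-1)^{\a+\b-1/2}\int_0^{1/(2(D-1))}(1+w)^{\a-1/2}w^{\b-1}\,dw$, then read off the three regimes by splitting at $w=1$. The only place where the paper is slightly more careful is that it treats $D\ge 3/2$ separately before splitting at $w=1$: when $\varepsilon=D-1\ge 1/2$ the upper limit $1/(2\varepsilon)$ is at most $1$, so your ``split at $w=1$'' is vacuous there and the stated comparison $\int_1^{1/(2\varepsilon)} w^{\a+\b-3/2}\,dw\simeq \varepsilon^{-(\a+\b-1/2)}$ is not literally available; in that range one simply has the full integral $\simeq \varepsilon^{-\b}$ and hence $J^1\simeq \varepsilon^{\a-1/2}\simeq 1$, which matches all three claimed answers. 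This is presumably what your closing sentence about patching at $D=2$ is gesturing at, but it is worth making explicit.
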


The case $D=1$ in the statement of Lemma \ref{est_J} and in its proof is understood in the limiting sense.

\begin{proof}[Proof of Lemma \ref{est_J}]
Split the integral defining $J_{\a,\b,\gamma}(D)$ according to the intervals $(0,1/2)$ and $(1/2,1)$,
and denote the resulting integrals by $J^0_{\a,\b,\gamma}(D)$ and $J^1_{\a,\b,\gamma}(D)$, respectively.
Then we have
$$
J^0_{\a,\b,\gamma}(D) \simeq D^{\a-1/2} \int_0^{1/2} s^{\gamma}\, ds \simeq D^{\a-1/2}.
$$
For the complementary integral we write
$$
J^1_{\a,\b,\gamma}(D) \simeq \int_{1/2}^1 (D-s)^{\a-1/2} (1-s)^{\b-1}\, ds. 
$$
Changing now the variable of integration $s = 1- (D-1)w$ leads to
$$
J^1_{\a,\b,\gamma}(D) \simeq (D-1)^{\a+\b-1/2} \int_0^{\frac{1}{2(D-1)}} (1+w)^{\a-1/2} w^{\b-1}\, dw.
$$
For $D \ge 3/2$ it follows that
$$
J^1_{\a,\b,\gamma}(D) \simeq (D-1)^{\a+\b-1/2} \int_0^{\frac{1}{2(D-1)}} w^{\b-1}\, dw \simeq
	(D-1)^{\a-1/2} \simeq D^{\a-1/2}.
$$
On the other hand, if $1 \le D < 3/2$,
\begin{align*}
J^1_{\a,\b,\gamma}(D) & 
	\simeq (D-1)^{\a+\b-1/2} \Bigg[ 1 + \int_1^{\frac{1}{2(D-1)}} w^{\a+\b-3/2}\, dw \Bigg] \\
	& \simeq  \begin{cases}
						(D-1)^{\a+\b-1/2}, & \quad \a + \b  < 1/2, \\
						1 + \log\frac{1}{D-1}, & \quad \a + \b = 1/2, \\
						1, & \quad \a + \b  > 1/2.
					\end{cases}
\end{align*}
Combining the above estimates of $J^0_{\a,\b,\gamma}(D)$ and $J^1_{\a,\b,\gamma}(D)$ we arrive at the
desired conclusion.
\end{proof}

\subsection{Estimates of the kernel $K_t^{\a,\b}(x,z)$}
We are now ready to prove sharp estimates of $K_t^{\a,\b}(x,z)$.
Recall that the kernel vanishes in the region $\{t,x,z > 0 : t < |x-z|\}$. 

\begin{thm} \label{thm:est_ker}
Let $\a > -1/2$ and $\b > 0$ be fixed. Let $t,x,z > 0$. Then
\begin{align*}
K_t^{\a,\b}(x,z) & \simeq 
\frac{(xz)^{-\a-1/2}}{t^{2\a+2\b}} \big[ t^2-(x-z)^2\big]^{\a+\b-1/2}
		\begin{cases}
			\left( \frac{(x+z)^2-t^2}{xz}\right)^{\a+\b-1/2}, & \a + \b < 1/2, \\
			1 + \log\left(\frac{xz}{(x+z)^2-t^2}\right), & \a + \b = 1/2,\\
			1, & \a + \b > 1/2,		
		\end{cases}
\end{align*}
uniformly in $|x-z| < t < x+z$, and
$$
					K_t^{\a,\b}(x,z) \simeq
					\frac{1}{t^{2\a+2\b}} \big[ t^2-(x-z)^2\big]^{\b-1} 
						\begin{cases}
							\left( \frac{t^2-(x+z)^2}{t^2-(x-z)^2}\right)^{\a+\b-1/2}, & \a + \b < 1/2, \\
							1 + \log\left(\frac{t^2-(x-z)^2}{t^2-(x+z)^2}\right), & \a + \b = 1/2, \\
							1, & \a + \b > 1/2,
						\end{cases}						
$$
uniformly in $t > x+z$.
\end{thm}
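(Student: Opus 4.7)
The plan is to reduce the two asserted estimates to Lemmas \ref{est_I} and \ref{est_J} via the explicit representation \eqref{ifK}. In both regions I would begin with the substitution $s=\cos\theta$, under which $\sin^{2\a}\theta\,d\theta\mapsto(1-s^2)^{\a-1/2}ds$ and $t^2-x^2-z^2+2xz\cos\theta=2xz(s-c)$, where $c=(x^2+z^2-t^2)/(2xz)$. For the easier range $t>x+z$ one has $A=\pi$, and after a further reflection $s\mapsto -s$ the integral in \eqref{ifK} becomes $(t^2-x^2-z^2)^{\b-1}I_{\a,\b-1}(B)$ with $B=2xz/(t^2-x^2-z^2)\in(0,1)$ and $1-B=(t^2-(x+z)^2)/(t^2-x^2-z^2)$. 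Since $t^2-x^2-z^2\simeq t^2-(x-z)^2$ uniformly throughout this region, I can freely interchange these two quantities in the final formula, and Lemma \ref{est_I} with $\gamma=\b-1$ (so that $\a+\gamma+1/2=\a+\b-1/2$ governs the trichotomy) yields the three asserted cases directly; an inconsequential bounded additive shift in the argument of the log does not affect the equivalence.

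For the harder range $|x-z|<t<x+z$ one has $c\in(-1,1)$, and the integral equals $(2xz)^{\b-1}\int_c^1(s-c)^{\b-1}(1-s^2)^{\a-1/2}\,ds$. I would perform two successive affine changes, $s=c+(1-c)v$ followed by $w=1-v$, which together peel off a factor $(1-c)^{2\a+\b-1}$ and convert the remaining integral into $J_{\a,\b,\a-1/2}(D)$ with $D=2/(1-c)=4xz/(t^2-(x-z)^2)\ge 1$. Applying Lemma \ref{est_J} then forces a split on whether $D\ge 2$ or $1\le D<2$. In the first subregion $t^2\le x^2+z^2$ and $J\simeq D^{\a-1/2}$; substituting back gives $K_t^{\a,\b}(x,z)\simeq (xz)^{-\a-1/2}t^{-2(\a+\b)}[t^2-(x-z)^2]^{\a+\b-1/2}$, and since $(x+z)^2-t^2\simeq xz$ throughout this subregion, the auxiliary factor in the statement is $\simeq 1$ in all three sub-cases (and the relevant log is bounded). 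In the second subregion $t^2>x^2+z^2$ so $t^2-(x-z)^2\simeq xz$, while $D-1=((x+z)^2-t^2)/(t^2-(x-z)^2)\simeq((x+z)^2-t^2)/(xz)$, and the three sub-cases of Lemma \ref{est_J} translate precisely into the three cases of the statement.

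The hard part will be the bookkeeping: keeping track of the equivalences between the four basic quantities $t^2-x^2-z^2$, $t^2-(x-z)^2$, $(x+z)^2-t^2$, and $xz$ across the subregimes produced by the $D\lessgtr 2$ split, so that the two partial formulas obtained from Lemma \ref{est_J} amalgamate into the single formula asserted throughout $|x-z|<t<x+z$. The substitutions and trichotomies themselves are routine; once the dictionary between the auxiliary variables $B,D$ and the geometric quantities in the statement is in place, matching the cases $\a+\b\lessgtr 1/2$ on the two sides is automatic.
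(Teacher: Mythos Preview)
Your proposal is correct and follows essentially the same route as the paper: the paper also splits into the region $t>x+z$ (reduced to $I_{\a,\b-1}(B)$ with $B=2xz/(t^2-x^2-z^2)$ and handled by Lemma~\ref{est_I}) and the region $|x-z|<t<x+z$ (reduced to $J_{\a,\b,\a-1/2}(D)$ with $D=4xz/(t^2-(x-z)^2)$ and handled by Lemma~\ref{est_J}, splitting on $D\gtrless 2$, i.e.\ $t^2\lessgtr x^2+z^2$). The only cosmetic difference is that the paper performs each change of variable in a single step ($\cos\theta=-s$, resp.\ $\cos\theta=1-\frac{t^2-(x-z)^2}{2xz}s$) rather than composing the affine maps you describe; the resulting integrals, constants peeled off, and the bookkeeping with \eqref{st5} are identical.
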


Taking into account the relation
\begin{equation} \label{st5}
xz \simeq \big[t^2-(x-z)^2\big] \vee \big[ (x+z)^2-t^2\big], \qquad |x-z| < t < x+z,
\end{equation}
we see that the behavior of $K_t^{\a,\b}(x,z)$ depends on $x$ and $z$ only
through $|x-z|$ and $x+z$. Moreover, this behavior depends essentially on the distances from $t^2$ to
$(x-z)^2$ and $(x+z)^2$, and some singularities occur when any of them tends to zero.
It is perhaps interesting to observe that the bounds from Theorem \ref{thm:est_ker} can be written in
a more compact way as
\begin{align*}
K_t^{\a,\b}(x,z) & \simeq
t^{-2(\a+\b)} \Big[ \big(t^2-(x-z)^2\big) \vee \big| t^2-(x+z)^2\big| \Big]^{-\a-1/2} \\
& \qquad \times	
	\begin{cases}
		\big[ \big(t^2-(x-z)^2\big)\wedge \big|t^2-(x+z)^2\big|\big]^{\a+\b-1/2}, & \quad \a+\b < 1/2, \\
		1 + \log\frac{t^2-(x-z)^2}{[t^2-(x-z)^2]\wedge |t^2-(x+z)^2|}, & \quad \a+\b = 1/2, \\
		\big(t^2-(x-z)^2\big)^{\a+\b-1/2}, & \quad \a+\b > 1/2,
	\end{cases}
\end{align*}	
uniformly in $t > |x-z|$ such that $t \neq x+z$.

\begin{proof}[{Proof of Theorem \ref{thm:est_ker}}]
We distinguish three cases emerging from splitting the range of $t^2$ according to the points
$(x-z)^2$, $x^2+z^2$ and $(x+z)^2$. Observe that the middle point is the geometric center of the interval
defined by the other points as endpoints.

\noindent \textbf{Case 1. ${t^2 > (x+z)^2}$.} 
Changing the variable of integration $\cos\theta = -s$ in \eqref{ifK}, we get
\begin{align*}
K_t^{\a,\b}(x,z) & = c_{\a,\b}\, t^{-2(\a+\b)} \int_{-1}^1 \big( t^2-x^2-z^2-2xzs\big)^{\b-1} (1-s^2)^{\a-1/2}\, ds \\
	& = c_{\a,\b}\, t^{-2(\a+\b)} \big(t^2-x^2-z^2\big)^{\b-1}\, I_{\a,\b-1}\bigg(\frac{2xz}{t^2-x^2-z^2}\bigg).
\end{align*}
Now an application of Lemma \ref{est_I} gives
$$
K_t^{\a,\b}(x,z) \simeq t^{-2(\a+\b)} \big( t^2-x^2-z^2\big)^{\b-1}
		\begin{cases}
			\Big(\frac{t^2-(x+z)^2}{t^2-x^2-z^2}\Big)^{\a+\b-1/2}, & \quad \a+\b < 1/2,\\
			1+\log\frac{t^2-x^2-z^2}{t^2-(x+z)^2}, & \quad \a+\b = 1/2,\\
			1, & \quad \a+\b > 1/2.
		\end{cases}
$$
Since $t^2-x^2-z^2 \simeq t^2-(x-z)^2$, this is equivalent to the bounds of the theorem.

\noindent \textbf{Case 2. ${x^2+z^2 < t^2 < (x+z)^2}$.}
Changing the variable of integration $\cos\theta = 1 - \frac{t^2-(x-z)^2}{2xz}s$ in \eqref{ifK} and then
simplifying the resulting expression we see that
\begin{equation} \label{e2f}
	K_t^{\a,\b}(x,z) = c_{\a,\b}\, t^{-2(\a+\b)} (2xz)^{-2\a} \big[t^2-(x-z)^2\big]^{2\a+\b-1}
			J_{\a,\b,\a-1/2}\bigg( \frac{4xz}{t^2-(x-z)^2}\bigg).
\end{equation}
Since $t^2 > x^2+z^2$ is equivalent to $\frac{4xz}{t^2-(x-z)^2} < 2$, Lemma \ref{est_J} implies
$$
K_t^{\a,\b}(x,z) \simeq t^{-2(\a+\b)} (xz)^{-2\a} \big[ t^2 -(x-z)^2\big]^{2\a+\b-1}
		\begin{cases}
			\Big( \frac{(x+z)^2-t^2}{t^2-(x-z)^2}\Big)^{\a+\b-1/2}, & \quad \a+\b < 1/2, \\
			1 + \log\frac{t^2-(x-z)^2}{(x+z)^2-t^2}, & \quad \a+\b=1/2, \\
			1, & \quad \a+\b > 1/2,
		\end{cases}
$$
which with the aid of \eqref{st5} leads to the estimates of the theorem.

\noindent \textbf{Case 3. ${(x-z)^2 < t^2 \le x^2+z^2}$.}
In view of \eqref{st5}, the estimate in question can be stated simply as
$$
K_t^{\a,\b}(x,z) \simeq t^{-2(\a+\b)} (xz)^{-\a-1/2} \big[t^2-(x-z)^2\big]^{\a+\b-1/2}.
$$
But this is a straightforward consequence of \eqref{e2f} and Lemma \ref{est_J}, since
$t^2 \le x^2+z^2$ means that $\frac{4xz}{t^2-(x-z)^2} \ge 2$.

The proof of Theorem \ref{thm:est_ker} is complete. 
\end{proof}

\subsection{Some special elementary cases}
It is interesting to observe that for $\b=0$ the kernel $K_t^{\a,\b}(x,z)$ can be computed explicitly.
To this end let $\a > -1/2$ and $\b =0$. We have
$$
K_t^{\a,0}(x,z) = 2^{\a}\Gamma(\a+1)(txz)^{-\a} \int_0^{\infty} J_{\a}(ty) J_{\a}(xy) J_{\a}(zy)\, y^{1-\a}\, dy.
$$
In virtue of formula \cite[Chapter XII, \S13$\cdot$46(3)]{Wat},
see also \cite[Formula (14) on p.\,230]{Prudnikov2},
the kernel vanishes if either
$t < |x-z|$ or $t > x+z$, and for $|x-z|< t < x+z$ we have
\begin{equation} \label{kera0}
K_t^{\a,0}(x,z) = \frac{\Gamma(\a+1)}{\sqrt{\pi}2^{2\a-1}\Gamma(\a+1/2)} (txz)^{-2\a}
	\Big( \big[t^2-(x-z)^2\big] \big[(x+z)^2-t^2\big]\Big)^{\a-1/2}.
\end{equation}

In \cite{Leckband} and \cite{DuoMoyuaOrue} standard spherical means of radial functions 
in $\mathbb{R}^n$, $n \ge 2$, were considered. These means are represented via the one-dimensional kernel
\begin{equation} \label{kerL}
L_t^{n}(x,z) = \frac{4\Gamma(n/2)}{\Gamma((n-1)/2)\sqrt{\pi}} \bigg[ 
	\frac{2(b^2-t^2)^{1/2}(t^2-a^2)^{1/2}}{b^2-a^2}\bigg]^{n-3} \frac{t^{2-n}}{b^2-a^2},
\end{equation}
where $a = |x-z|$, $b=x+z$, $|x-z|< t < x+z$; the related measure of integration is $z^{n-1}dz$.
One can generalize \eqref{kerL} by letting $n=2\a+2$ and considering a continuous range $\a > -1/2$.
Then it is straightforward to check that
$$
L_t^{2\a+2}(x,z) = K_t^{\a,0}(x,z), \qquad |x-z| < t < x+z.
$$
Moreover, the measure $z^{n-1}dz$ becomes $d\mu_{\a}(z)$. Thus the kernel $K_t^{\a,\b}(x,z)$ generalizes
$L_t^{2\a+2}(x,z)$.

We now look closer at the more elementary case $\b=0$, $\a=1/2$, to see that one indeed has to be careful
when dealing with values of $K_t^{\a,\b}(x,z)$ in the singular cases $t=|x-z|$ and $t=x+z$ (anyway,
values of the kernel for those $t,x,z$ are irrelevant for our purposes). We have
$$
K_t^{1/2,0}(x,z) = \frac{2}{\pi} (txz)^{-1} \int_0^{\infty} \sin(ty) \sin(xy) \sin(zy) \,
	\frac{dy}y.
$$
Using \cite[Formula (3) on p.\,411]{Prudnikov1}, we find that
$$
K_t^{1/2,0}(x,z) = \frac{1}{2} (txz)^{-1}
	\begin{cases}
		0, & \quad |x-z| > t \;\; \textrm{or} \;\; t> x+z, \\
		1/2, & \quad t=|x-z| \;\; \textrm{or} \;\; t= x+z, \\
		1, & \quad |x-z| < t < x+z.
	\end{cases}
$$

Another example in this spirit is the case $\a=-1/2$, $\b=1$, computed by means of 
\cite[Formula (21) on pp.\,406--407]{Prudnikov1}:
$$
K_t^{-1/2,1}(x,z) = \frac{1}{t}
	\begin{cases}
		0, & \quad t < |x-z|,\\
		1/4, & \quad t=|x-z|,\\
		1/2, & \quad |x-z| < t < x+z,\\
		3/4, & \quad t= x+z,\\
		1, & \quad t > x+z.
	\end{cases}
$$

A careful reader probably have noticed that the explicit formulas just given are not consistent with the
estimates of Theorem \ref{thm:est_ker}. More precisely, a kind of phase shift occurs for $\a \ge 1/2$ and $\b=0$,
as well as for $\a=-1/2$ and $\b=1$. This interesting and perhaps a bit unexpected phenomenon will be fully
revealed in Theorem \ref{thm:kerest2}.

Further explicit formulas for the kernel are implicitly contained in Section \ref{ss:expl} below,
see Remark \ref{rem:ker_expl}.

%%%%%%%%%%%%%%%%%%%%%%%%%%%%%%%%%%%%%%%%%%%%%%%%%%%%%%%%%%%%%%
\section{Pointwise kernel estimates II: general case} \label{sec:estII}
%%%%%%%%%%%%%%%%%%%%%%%%%%%%%%%%%%%%%%%%%%%%%%%%%%%%%%%%%%%%%%

Recall that the kernel we are dealing with is
$$
K_t^{\a,\b}(x,z) = \frac{2^{\a+\b}\Gamma(\a+\b+1)}{t^{\a+\b}(xz)^{\a}} \int_0^{\infty} J_{\a+\b}(ty)
	J_{\a}(xy) J_{\a}(zy) y^{1-\a-\b}\, dy, \qquad t,x,z > 0. 
$$
In the previous section we found sharp estimates of this kernel when $\a > -1/2$ and $\b >0$.
Now we consider all $\a > -1$ and $\b > -\a -1/2$, that is all $(\a,\b)$ for which the kernel is defined.
Our aim is to find possibly precise estimates of $K_t^{\a,\b}(x,z)$.
Note that one cannot hope for sharpness for all $\a$ and $\b$ in question since in general the kernel
takes both positive and negative values.

Denote the integral entering the kernel by $\mathcal{I}_{\a,\b}$,
\begin{equation} \label{mIdef}
\mathcal{I}_{\a,\b} = \frac{1}{2^{\a+\b}\Gamma(\a+\b+1)}t^{\a+\b} (xz)^{\a} K_t^{\a,\b}(x,z).
\end{equation}
In order to estimate the kernel, we shall first study $\mathcal{I}_{\a,\b}$.

%%%%
\subsection{Computation of the triple Bessel function integral $\mathcal{I}_{\a,\b}$}
To compute $\mathcal{I}_{\alpha,\beta}$ we use the formulas \cite[2.12.42 (11)--(13)]{Prudnikov2}, see also
\cite[6.578 (8)]{GR}, expressing it in terms of the associated Legendre functions
(in the corresponding formulas
\cite[Chapter XIII, \S13$\cdot$46(4),(5)]{Wat} and \cite[10.22.72]{handbook} there seems to be an error,
wrong constant in the $Q$ part). What we get splits naturally into the three cases below.

\noindent \textbf{Case 1.} $t < |x-z|$. Then $\mathcal{I}_{\a,\b} \equiv 0$ 
(hence the whole kernel vanishes in this case).

\noindent \textbf{Case 2.} $|x-z| < t < x+z$. Then
$$
\mathcal{I}_{\a,\b} = \frac{1}{\sqrt{2\pi}} \frac{(xz)^{\a+\b-1}}{t^{\a+\b}}
	(\sin v)^{\a+\b-1/2} \mathsf{P}_{\a-1/2}^{1/2-\a-\b}(\cos v),
$$
where $v \in (0,\pi)$ is such that
\begin{equation} \label{def_v}
\cos v = \frac{x^2+z^2-t^2}{2xz} \in (-1,1)
\end{equation}
and $\mathsf{P}$ is the Ferrers function of the first kind 
(associated Legendre function of the first kind on the cut),
cf.\ \cite[Chapter 14]{handbook}.

\noindent \textbf{Case 3.} $t > x+z$. Then
$$
\mathcal{I}_{\a,\b} = \sqrt{\frac{2}{\pi^3}} \frac{(xz)^{\a+\b-1}}{t^{\a+\b}}
	(\sinh u)^{\a+\b-1/2} \sin(\pi \beta) e^{\pi i (\a+\b-1/2)} Q_{\a-1/2}^{1/2-\a-\b}(\cosh u),
$$
where $u > 0$ is such that
\begin{equation} \label{def_u}
\cosh u = \frac{t^2-x^2-z^2}{2xz} \in (1,\infty)
\end{equation}
and $Q$ is the associated Legendre function of the second kind, see \cite[14.3.7]{handbook}.
Since $Q_{\a-1/2}^{1/2-\a-\b}$ is not
defined for $\beta = 1,2,\ldots$, the formula above must be understood in a limiting sense.
To overcome this inconvenience, instead of $Q$ we rather use Olver's function (cf.\ \cite[14.3.10]{handbook})
$$
\mathbf{Q}_{\a-1/2}^{1/2-\a-\b}(y) = e^{\pi i (\a + \b - 1/2)} 
	\frac{Q_{\a-1/2}^{1/2-\a-\b}(y)}{\Gamma(1-\b)},
$$
which (unlike $Q_{\a-1/2}^{1/2-\a-\b}$) is in our situation always real-valued and defined for all $\a$ and $\b$.
This leads to
$$
\mathcal{I}_{\a,\b} = \mathcal{C}(\b) \sqrt{\frac{2}{\pi^3}} \frac{(xz)^{\a+\b-1}}{t^{\a+\b}}
	(\sinh u)^{\a+\b-1/2} \mathbf{Q}_{\a-1/2}^{1/2-\a-\b}(\cosh u),
$$
with $\mathcal{C}(\b) = \sin(\pi \b) \Gamma(1-\b)$ understood in the limiting sense when $\b=1,2,\ldots$. 
Observe that $\mathcal{C}(\b) = 0$ if and only if
$\b = 0,-1,-2,\ldots$ and for such $\b$ this part of the kernel vanishes. Further, $\mathcal{C}(\b)>0$
when $\b >0$ and for $\b < 0$ the sign of $\mathcal{C}(\b)$ is $(-1)^{\lfloor \b \rfloor}$
provided that $\beta \neq -1,-2,\ldots$.
Observe that the constant in question can be written, with a limiting understanding,
as $\mathcal{C}(\beta) = \pi/\Gamma(\beta)$, by Euler's reflection formula
\begin{equation} \label{euler}
\sin(\pi y) \Gamma(1-y) \Gamma(y) = \pi.
\end{equation}

Summing up Cases 1--3, one has
\begin{equation} \label{I_for}
\mathcal{I}_{\a,\b} = \frac{(xz)^{\a+\b-1}}{\sqrt{2\pi}t^{\a+\b}} 
										\begin{cases}
											0, & t < |x-z|, \\												
												(\sin v)^{\a+\b-1/2} \mathsf{P}_{\a-1/2}^{1/2-\a-\b}(\cos v), & |x-z| < t < x+z,\\
												\frac{2}{\Gamma(\b)}
												(\sinh u)^{\a+\b-1/2} \mathbf{Q}_{\a-1/2}^{1/2-\a-\b}(\cosh u), & x+z < t,
										\end{cases}
\end{equation}
where $v$ and $u$ are related to $t,x,z$ by \eqref{def_v} and \eqref{def_u}.

%%%%
\subsection{Explicit instances of the Legendre functions $\mathsf{P}_{\a-1/2}^{1/2-\a-\b}$
and $\mathbf{Q}_{\a-1/2}^{1/2-\a-\b}$} \label{ss:expl}
For some $\a$ and $\b$ the functions $\mathsf{P}_{\a-1/2}^{1/2-\a-\b}$ and
$\mathbf{Q}_{\a-1/2}^{1/2-\a-\b}$ can be expressed in a more explicit way.
We now derive some of these more elementary expressions. This is of importance for our further development,
since we need to cover certain values of $(\a,\b)$ for which asymptotics of
$\mathsf{P}_{\a-1/2}^{1/2-\a-\b}$ and $\mathbf{Q}_{\a-1/2}^{1/2-\a-\b}$ (see Section \ref{ss:asym} below)
are in a sense singular and seem to be inaccessible in a standard literature on special functions.
Independently, it is of course of interest to know
as explicit form of the kernel as possible, at least for some $\a$ and $\b$.

In what follows we always consider $\mathsf{P}_{\a-1/2}^{1/2-\a-\b}$ on $(-1,1)$ and
$\mathbf{Q}_{\a-1/2}^{1/2-\a-\b}$ on $(1,\infty)$. Also, we always assume $\a > -1$ and
$\a+\b > -1/2$, even though these assumptions can be weakened in some places below.
We will use the formulas (cf.\ \cite[14.3.1,\ 14.3.19,\ 14.3.20]{handbook})
\begin{align} \label{P_form}
\mathsf{P}_{\a-1/2}^{1/2-\a-\b}(y) & = \bigg( \frac{1+y}{1-y} \bigg)^{(1/2-\a-\b)/2}
	\mathbf{F}\Big(\a + \frac{1}{2}, -\a+\frac{1}2; \a+\b+1/2 ; \frac{1-y}2 \Big), \\ \label{Q_form1}
\mathbf{Q}_{\a-1/2}^{1/2-\a-\b}(y) & = 2^{\a-1/2}\Gamma\Big(\a+\frac{1}2\Big)
	\frac{(y+1)^{(1/2-\a-\b)/2}}{(y-1)^{(\a-\b+3/2)/2}}\, \mathbf{F}\Big( \a+\frac{1}2, 1-\b; 2\a+1;
		\frac{2}{1-y}\Big)\\ \label{Q_form2}
& = \frac{-\pi}{2\sin(\pi(\a+\b-1/2))} \\
&	\qquad \times \bigg[ \frac{1}{\Gamma(1-\b)} \bigg(\frac{y-1}{y+1}\bigg)^{(\a+\b-1/2)/2}
	\mathbf{F}\Big( \frac{1}2-\a, \a+\frac{1}2; \a+\b+\frac{1}2; \frac{1-y}2 \Big) \nonumber \\
& \qquad \quad - \frac{1}{\Gamma(2\a+\b)}\Big( \frac{y+1}{y-1} \Big)^{(\a+\b-1/2)/2}
	\mathbf{F}\Big( \frac{1}2-\a, \a+\frac{1}2; \frac{3}2-\a-\b; \frac{1-y}2 \Big) \bigg], \nonumber
\end{align}
where $\mathbf{F}$ stands for Olver's hypergeometric function, see \cite[Sections 15.1,\ 15.2]{handbook},
$$
\mathbf{F}(a,b;c;y) = \frac{1}{\Gamma(c)}\; {_2F_1}(a,b;c;y),
$$
with ${_2F_1}$ being the Gauss hypergeometric function; note that these functions are symmetric in the
first two parameters.
The formula \eqref{Q_form1} has to be understood
in a limiting sense for $\a = -1/2$. In \eqref{Q_form2} we assume that $\a+\b-1/2$ is not integer.
Then in cases when $\b$ is a positive integer or $2\a+\b$ is a non-positive integer, the formula has to
be understood in a limiting sense. 
Furthermore, the following connection with the classical Jacobi polynomials, here denoted by
$\mathbb{P}_m^{\gamma,\delta}$, will be used (cf.\ \cite[18.5.7]{handbook}, \cite[p.\,212]{MOS})
\begin{align} \label{F_jac}
\mathbb{P}_m^{\gamma,\delta}(y) & = \frac{\Gamma(m+\gamma+1)}{m!}\;
	\mathbf{F}\Big(-m,m+\gamma+\delta+1;\gamma+1; \frac{1-y}2\Big) \\
& = \frac{\Gamma(\gamma + \delta + 2m +1)\Gamma(-2m-\gamma-\delta)}{m! \Gamma(\gamma+\delta+m+1)} 
	\Big(\frac{y-1}2\Big)^m \label{F_jac2} \\
& \qquad \times \mathbf{F}\Big( -m, -m - \gamma; -2m-\gamma-\delta; \frac{2}{1-y}\Big), \qquad m=0,1,2,\ldots,
	\nonumber
\end{align}
with suitable limiting understanding of the cases when singularities occur in \eqref{F_jac2}.
For the sake of clarity, we restrict our attention to $-1<y<1$ in \eqref{F_jac} and to $y>1$ in \eqref{F_jac2}.

In the computations of $\mathsf{P}_{\a-1/2}^{1/2-\a-\b}$ and $\mathbf{Q}_{\a-1/2}^{1/2-\a-\b}$ below
we distinguish three main cases.

\noindent \textbf{Case 1.} $\b$ is a non-positive integer, say $\b = -n$, $n=0,1,2,\ldots$.
Observe that in this situation always $\a > -1/2$. We focus on $\mathsf{P}_{\a-1/2}^{1/2-\a-\b}$ only,
since for the considered parameters the form of $\mathbf{Q}_{\a-1/2}^{1/2-\a-\b}$ is irrelevant for
the kernel (the part of the kernel expressed by $\mathbf{Q}_{\a-1/2}^{1/2-\a-\b}$
vanishes for these parameters, see \eqref{I_for}).

Applying the linear transformation (cf.\ \cite[15.8.1]{handbook}) 
$$
\mathbf{F}(a,b;c;y) = (1-y)^{c-a-b}\mathbf{F}(c-a,c-b;c;y)
$$
to \eqref{P_form} and then using \eqref{F_jac} we arrive at an expression for 
$\mathsf{P}_{\a-1/2}^{1/2-\a-\b}$ in terms of Jacobi (actually ultraspherical) polynomials,
\begin{equation} \label{P_expl_1}
\mathsf{P}_{\a-1/2}^{1/2-\a-\b}(y) = \frac{2^{n-\a+1/2} n!}{\Gamma(\a+1/2)}
	\big[ (1+y)(1-y) \big]^{(\a-n-1/2)/2} \mathbb{P}_n^{\a-n-1/2,\a-n-1/2}(y).
\end{equation}
Note that in the special case of $n=0=\beta$ one has $\mathbb{P}_n^{\a-n-1/2,\a-n-1/2}(y) \equiv 1$.

\noindent \textbf{Case 2.} $2\a+\b=0$. Notice that in this case $-1< \a < 1/2$.
Using \eqref{P_form}, \eqref{Q_form1} and the identity (cf.\ \cite[15.4.6]{handbook})
$\mathbf{F}(a,b;b;y) = (1-y)^{-a}/\Gamma(b)$ we find that
\begin{align} \label{P_expl_2}
\mathsf{P}_{\a-1/2}^{1/2-\a-\b}(y) & = \frac{2^{\a+1/2}}{\Gamma(1/2-\a)}
	\big[ (1+y)(1-y) \big]^{-(\a+1/2)/2}, \\ \label{Q_expl_2}
\mathbf{Q}_{\a-1/2}^{1/2-\a-\b}(y) & = \frac{\sqrt{\pi}}{2^{\a+1/2}\Gamma(\a+1)}
	\big[ (y+1)(y-1) \big]^{-(\a+1/2)/2}.
\end{align}
Here, in the computation related to $\mathbf{Q}_{\a-1/2}^{1/2-\a-\b}$, we used the duplication
formula for the gamma function,
\begin{equation} \label{dupli}
\Gamma(y)\Gamma(y+1/2) = \sqrt{\pi}2^{-2y+1}\Gamma(2y).
\end{equation}
Furthermore,
in this computation we treated the value $\a=-1/2$ in a limiting sense.

\noindent \textbf{Case 3.} $\a$ is half of an odd integer, say $\a = n + 1/2$, $n=-1,0,1,2,\ldots$.
Note that in this situation $\b > -n-1$.

To begin with, we first consider the subcase $n=-1$ separately. To this end, $\a = -1/2$ and $\b > 0$.
The formulas we then get are
\begin{align} \label{P_expl_3s}
\mathsf{P}_{\a-1/2}^{1/2-\a-\b}(y) & = \frac{1}{\Gamma(\b)}\Big( \frac{1+y}{1-y} \Big)^{(1-\b)/2}, \\
\mathbf{Q}_{\a-1/2}^{1/2-\a-\b}(y) & = \frac{1}2\bigg[\Big(\frac{y+1}{y-1}\Big)^{(1-\b)/2}
	+ \Big(\frac{y-1}{y+1}\Big)^{(1-\b)/2}\bigg]. 
\label{Q_expl_3s}
\end{align}
To obtain \eqref{P_expl_3s} one observes that the first parameter of $\mathbf{F}$ in \eqref{P_form} vanishes
and, consequently, this function is constant and equal to $1/\Gamma(\b)$.
As for \eqref{Q_expl_3s}, we use \eqref{Q_form1} and, with a limiting understanding,
the duplication formula \eqref{dupli} with $y = \a + 1/2$, to see that
$$
\mathbf{Q}_{\a-1/2}^{1/2-\a-\b}(y) = \Big(\frac{y+1}{y-1}\Big)^{(1-\b)/2}\;
{_2F_1}\Big( \a+\frac{1}{2}, 1-\b; 2\a+1; \frac{2}{1-y} \Big) \bigg|_{\a=-1/2},
$$
where the hypergeometric function is understood in a limiting sense. To find its explicit form
we eliminate the singularity by means of the identity (cf.\ \cite[15.5.15]{handbook})
\begin{align*}
2\,{_2F_1}\Big( \a+\frac{1}{2}, 1-\b; 2\a+1; \frac{2}{1-y} \Big) & = 
	{_2F_1}\Big( \a+\frac{1}{2}, 1-\b; 2\a+2; \frac{2}{1-y} \Big) \\
& 	\qquad + {_2F_1}\Big( \a+\frac{3}{2}, 1-\b; 2\a+2; \frac{2}{1-y} \Big).
\end{align*}
Letting $\a = -1/2$ on the right-hand side here and using the formula ${_2F_1}(a,b;a;y) = (1-y)^{-b}$
(essentially stated in Case 2 in terms of $\mathbf{F}$) we see that
$$
{_2F_1}\Big( \a+\frac{1}{2}, 1-\b; 2\a+1; \frac{2}{1-y} \Big) \bigg|_{\a=-1/2} =
	\frac{1}2 \bigg[ 1 + \Big(\frac{y+1}{y-1}\Big)^{\b-1}\bigg].
$$
Now \eqref{Q_expl_3s} follows.

From now, to the end of Case 3, we assume $n \ge 0$.
By \eqref{P_form} and \eqref{F_jac} (recall that $\mathbf{F}$ is symmetric in the first two parameters)
one easily gets
\begin{equation} \label{P_expl_3}
\mathsf{P}_{\a-1/2}^{1/2-\a-\b}(y) = \frac{n!}{\Gamma(2n+\b+1)} \Big(\frac{1+y}{1-y}\Big)^{-(n+\b)/2}
	\mathbb{P}_n^{n+\b,-n-\b}(y).
\end{equation}
Finally, to compute $\mathbf{Q}_{\a-1/2}^{1/2-\a-\b}$ for $\a=n+1/2$, we first assume that
$\b$ is not integer and employ \eqref{Q_form2}. Then, with the aid of \eqref{F_jac},
\begin{align} \label{Q_expl_3}
\mathbf{Q}_{\a-1/2}^{1/2-\a-\b}(y) & = \frac{\pi n!}{2\sin(\pi(n+\b))\Gamma(1-\b)\Gamma(2n+\b+1)} \\
& \quad \times	\bigg[ \Big(\frac{y+1}{y-1}\Big)^{(n+\b)/2} \mathbb{P}_n^{-n-\b,n+\b}(y)
		- \Big(\frac{y-1}{y+1}\Big)^{(n+\b)/2} \mathbb{P}_n^{n+\b,-n-\b}(y) \bigg]. \nonumber
\end{align}
It remains to consider $\b = m$, $m=1,2,\ldots$ (the case when $\b$ is a non-positive integer is irrelevant for
our purposes). In this situation $\mathbf{Q}_{\a-1/2}^{1/2-\a-\b}$ can be computed in fact for any
$\a$ (recall that at the moment we are considering $\a \neq -1/2$). Indeed, using \eqref{Q_form1} and
\eqref{F_jac2} we arrive at
\begin{align} \label{Q_expl_3ss}
\mathbf{Q}_{\a-1/2}^{1/2-\a-\b}(y) & =
	\frac{\sqrt{\pi}(-1)^{m+1}(m-1)!\, \Gamma(2\a+1)}{2^{\a-m+3/2}\Gamma(\a+1)\Gamma(2\a+m)} \\
& \qquad \times \big[ (y+1)(y-1) \big]^{(1/2-\a-m)/2} \mathbb{P}_{m-1}^{1/2-\a-m,1/2-\a-m}(y). \nonumber
\end{align}
To be precise, here we also used the duplication and reflection formulas \eqref{dupli} and \eqref{euler}.
Note that for $\b=1$ and $\a=-1/2$ one has $\mathbf{Q}_{\a-1/2}^{1/2-\a-\b}(y) \equiv 1$.

\begin{remark} \label{rem:ker_expl}
From the above considerations it follows that the kernel $K_t^{\a,\b}(x,z)$ can be written explicitly, by
means of elementary functions and possibly Jacobi polynomials,
whenever $2\a+\b=0$ or $-\b \in \mathbb{N}$ or $\a+1/2 \in \mathbb{N}$; see Figure \ref{fig:Kexpl}
where bold straight lines represent the cases when the kernel has the explicit form.
\end{remark}

\begin{figure}
\centering
\begin{tikzpicture}[scale=5]
\draw[very thick,dashed] (2.1,0.5) -- (3.4,-0.8);
\draw[line width=2pt] (2.3,0.9) -- (2.9,-0.3);
\draw (2.0,0.3) node {$\alpha+\beta=-1/2$};
\draw (3.16,-0.87) node {$2\alpha+\beta=0$};
\draw[very thick, dashed] (2.3,1.1) -- (2.3,0.3);
\draw[very thick, dashed] (2.3,0.9) -- (3.15,-0.8);
\draw[line width=2pt] (2.5,0.1) -- (2.5,1.1);
\draw[line width=2pt] (2.5,0.1) -- (3.9,0.1);
\draw[line width=2pt] (2.9,-0.3) -- (3.9,-0.3);
\draw[line width=2pt] (3.3,-0.7) -- (3.9,-0.7);
\draw[line width=2pt] (2.9,1.1) -- (2.9,-0.3);
\draw[line width=2pt] (3.3,1.1) -- (3.3,-0.7);
\draw[line width=2pt] (3.7,1.1) -- (3.7,-0.8);
\filldraw[fill=white] (2.3,0.9) circle(0.6pt);
\filldraw[fill=white] (2.9,-0.3) circle(0.6pt);
\filldraw[fill=white] (2.5,0.1) circle(0.6pt);
\filldraw[fill=white] (3.3,-0.7) circle (0.6pt);
\draw[arrows=-angle 60] (2.7,-0.8) -- (2.7,1.15);
\draw[thin] (2,0.1) -- (2.48,0.1);
\draw[arrows=-angle 60] (2.52,0.1) -- (3.95,0.1);
\draw[very thin] (2.3,0.1) -- (2.3,0.13);
\draw[very thin] (3.1,0.1) -- (3.1,0.13);
\draw[very thin] (3.5,0.1) -- (3.5,0.13);
\draw[very thin] (2.7,0.9) -- (2.73,0.9);
\draw[very thin] (2.7,0.5) -- (2.73,0.5);
\draw[very thin] (2.7,-0.3) -- (2.73,-0.3);
\draw[very thin] (2.7,-0.7) -- (2.73,-0.7);
\node at (2.3,0) {$-1$};
\node at (3.1,0) {$1$};
\node at (3.5,0) {$2$};
\node at (2.8,-0.3) {$-1$};
\node at (2.8,-0.7) {$-2$};
\node at (2.8,0.5) {$1$};
\node at (2.8,0.9) {$2$};
\node at (4,0.1) {$\alpha$};
\node at (2.7,1.2) {$\beta$};
\end{tikzpicture}
\caption{Cases when the kernel has explicit form} \label{fig:Kexpl}
\end{figure}
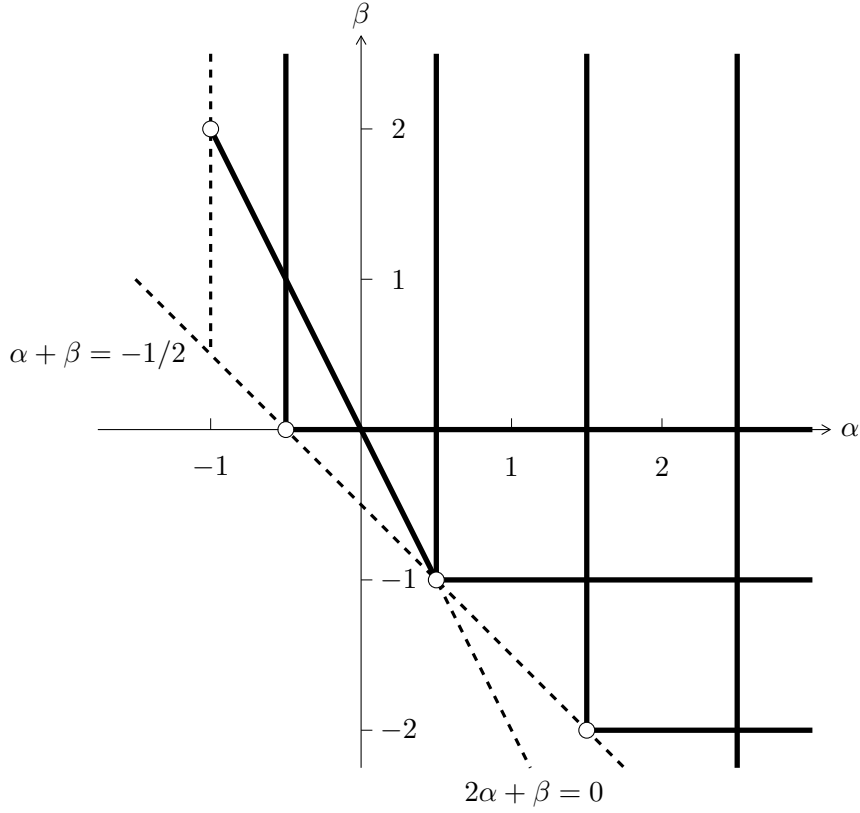

%%%%
\subsection{Asymptotics of $\mathsf{P}_{\a-1/2}^{1/2-\a-\b}$ and $\mathbf{Q}_{\a-1/2}^{1/2-\a-\b}$}
\label{ss:asym}
To estimate the kernel our strategy will be to employ known asymptotics of the Legendre functions
near singular points ($-1^+$ and $1^-$ in case of $\mathsf{P}_{\a-1/2}^{1/2-\a-\b}$,
and $1^+$ and $\infty$ in case of $\mathbf{Q}_{\a-1/2}^{1/2-\a-\b}$).
These asymptotics will be taken either from the literature, cf.\ \cite[Section 3.9.2]{EMOT},
\cite[Table 4.8.2]{MOS}, \cite[Section 14.8]{handbook},
or from the explicit formulas derived in the previous section in case $(\a,\b)$ belongs to
one of the exceptional sets $E^{\mathsf{P}}$, $E^{\mathbf{Q}}$ that will be defined in a moment.
In the asymptotic expressions below we always write multiplicative constants,
usually depending on $\a$ and $\b$, when they may decide about signs.

Recall that we are considering $\a > -1$ and $\b > -\a-1/2$.
The cases of singular points $1^-$ and $\infty$ are clear, we have
\begin{align} \label{as_P_reg}
\mathsf{P}_{\a-1/2}^{1/2-\a-\b}(y) & \simeq (1-y)^{-(1/2-\a-\b)/2}, \qquad y \to 1^-, \\
\mathbf{Q}_{\a-1/2}^{1/2-\a-\b}(y) & \simeq y^{-\a-1/2}, \qquad y \to \infty. \label{as_Q_reg}
\end{align}

In order to treat the remaining two singular points, we define the exceptional sets
\begin{align*}
E^{\mathsf{P}} & = \big\{ (\a,\b) : \a+\b \ge 1/2 \;\; \textrm{and} \;\; 
		[-\b \in \mathbb{N} \;\; \textrm{or} \;\; 2\a+\b = 0] \big\} \\ 
& \qquad \cup \big\{ (\a,\b) : \a+\b < 1/2 \;\; \textrm{and} \;\; \a+1/2 \in \mathbb{N} \big\},\\
E^{\mathbf{Q}} & = \big\{ (\a,\b) : \a + \b \ge 1/2 \;\; \textrm{and} \;\;
	2\a+\b = 0 \big\} \cup \big\{ (\a,\b) : \a+\b < 1/2 \;\; \textrm{and} \;\; \b=1 \big\}.
\end{align*}
These sets are visualized by Figures \ref{fig:EP} and \ref{fig:EQ} with bold straight lines and black dots.

\begin{figure}
\centering
\begin{tikzpicture}[scale=4]
\draw[very thick,dashed] (2.1,0.5) -- (3.4,-0.8);
\draw[very thick,dashed] (2.1,0.9) -- (3.8,-0.8);
\draw[line width=2pt] (2.3,0.9) -- (2.5,0.5);
\draw (1.8,0.4) node {$\alpha+\beta=-1/2$};
\draw (1.8,0.8) node {$\alpha+\beta=1/2$};
\draw (2.96,-0.87) node {$2\alpha+\beta=0$};
\draw[very thick, dashed] (2.3,1.1) -- (2.3,0.3);
\draw[very thick, dashed] (2.3,0.9) -- (3.15,-0.8);
\draw[line width=2pt] (2.9,0.1) -- (3.9,0.1);
\draw[line width=2pt] (3.3,-0.3) -- (3.9,-0.3);
\draw[line width=2pt] (3.7,-0.7) -- (3.9,-0.7);
\draw[line width=2pt] (2.5,0.5) -- (2.5,0.1);
\draw[line width=2pt] (2.9,0.1) -- (2.9,-0.3);
\draw[line width=2pt] (3.3,-0.3) -- (3.3,-0.7);
\draw[line width=2pt] (3.7,-0.7) -- (3.7,-0.8);
\filldraw[fill=black] (2.5,0.5) circle(0.6pt);
\filldraw[fill=black] (3.7,-0.7) circle (0.6pt);
\filldraw[fill=black] (3.3,-0.3) circle (0.6pt);
\filldraw[fill=black] (2.9,0.1) circle (0.6pt);
\filldraw[fill=white] (2.3,0.9) circle (0.6pt);
\filldraw[fill=white] (2.5,0.1) circle (0.6pt);
\filldraw[fill=white] (2.9,-0.3) circle (0.6pt);
\filldraw[fill=white] (3.3,-0.7) circle (0.6pt);
\draw[arrows=-angle 60] (2.7,-0.8) -- (2.7,1.15);
\draw[thin] (2,0.1) -- (2.48,0.1);
\draw[arrows=-angle 60] (2.52,0.1) -- (3.95,0.1);
\draw[very thin] (2.3,0.1) -- (2.3,0.13);
\draw[very thin] (3.1,0.1) -- (3.1,0.13);
\draw[very thin] (3.5,0.1) -- (3.5,0.13);
\draw[very thin] (2.7,0.9) -- (2.73,0.9);
\draw[very thin] (2.7,0.5) -- (2.73,0.5);
\draw[very thin] (2.7,-0.3) -- (2.73,-0.3);
\draw[very thin] (2.7,-0.7) -- (2.73,-0.7);
\node at (2.3,0) {$-1$};
\node at (3.1,0) {$1$};
\node at (3.5,0) {$2$};
\node at (2.8,-0.3) {$-1$};
\node at (2.8,-0.7) {$-2$};
\node at (2.8,0.5) {$1$};
\node at (2.8,0.9) {$2$};
\node at (4,0.1) {$\alpha$};
\node at (2.7,1.2) {$\beta$};
\end{tikzpicture}
\caption{The exceptional set $E^{\mathsf{P}}$} \label{fig:EP}
\end{figure}

\begin{figure}
\centering
\begin{tikzpicture}[scale=4]
\draw[very thick,dashed] (4.6,0.5) -- (5.4,-0.3);
\draw[very thick,dashed] (4.6,0.9) -- (5.8,-0.3);
\draw[line width=2pt] (4.8,0.9) -- (5,0.5);
\draw[line width=2pt] (4.8,0.5) -- (5,0.5);
\draw (4.3,0.4) node {$\alpha+\beta=-1/2$};
\draw (4.3,0.8) node {$\alpha+\beta=1/2$};
\draw (5.46,-0.57) node {$2\alpha+\beta=0$};
\draw[very thick, dashed] (4.8,1.1) -- (4.8,0.3);
\draw[very thick, dashed] (4.8,0.9) -- (5.5,-0.5);
\filldraw[fill=black] (5,0.5) circle(0.6pt);
\filldraw[fill=white] (4.8,0.9) circle(0.6pt);
\filldraw[fill=white] (4.8,0.5) circle(0.6pt);
\draw[arrows=-angle 60] (5.2,-0.5) -- (5.2,1.15);
\draw[arrows=-angle 60] (4.5,0.1) -- (5.85,0.1);
\draw[very thin] (4.8,0.1) -- (4.8,0.13);
\draw[very thin] (5.6,0.1) -- (5.6,0.13);
\draw[very thin] (5.2,0.9) -- (5.23,0.9);
\draw[very thin] (5.2,0.5) -- (5.23,0.5);
\draw[very thin] (5.2,-0.3) -- (5.23,-0.3); 
\node at (4.8,0) {$-1$};
\node at (5.6,0) {$1$};
\node at (5.3,-0.3) {$-1$};
\node at (5.3,0.5) {$1$};
\node at (5.3,0.9) {$2$};
\node at (5.9,0.1) {$\alpha$};
\node at (5.2,1.2) {$\beta$};
\end{tikzpicture}
\caption{The exceptional set $E^{\mathbf{Q}}$} \label{fig:EQ}
\end{figure}

At $-1^+$ it happens that
\begin{equation} \label{as_P}
\mathsf{P}_{\a-1/2}^{1/2-\a-\b}(y) \simeq
	\begin{cases}
		(1+y)^{(1/2-\a-\b)/2} \frac{1}{\Gamma(2\a+\b)\Gamma(\b)}, & \qquad \a+\b > 1/2,\\
		-\log(1+y) \sin[\pi(1/2-\a)], & \qquad \a+\b=1/2,\\
		(1+y)^{-(1/2-\a-\b)/2} \sin[\pi(1/2-\a)], & \qquad \a+\b< 1/2,
	\end{cases}
\end{equation}
as $y \to -1^+$, provided that $(\a,\b) \notin E^{\mathsf{P}}$.
On the other hand, for $(\a,\b) \in E^{\mathsf{P}}$ the asymptotic is different
(or rather inverse; moreover, there are no logarithms when $\a+\b=1/2$). More precisely,
for $(\a,\b) \in E^{\mathsf{P}}$ we have
$$
\mathsf{P}_{\a-1/2}^{1/2-\a-\b}(y) \simeq
	\begin{cases}
		(1+y)^{(\a+\b-1/2)/2} c_1(\a,\b), & \qquad \a+\b \ge 1/2,\\
		(1+y)^{(1/2-\a-\b)/2} c_2(\a,\b), & \qquad \a+\b< 1/2.
	\end{cases}
$$
Here the constants $c_1,c_2$ are to indicate signs, $c_1 = 1$ if $2\a+\b=0$,
$c_1 = (-1)^{\b}$ if $\b=0,-1,-2,\ldots$, $c_2 = 1$ if $\a=-1/2$ and $c_2=(-1)^{\a-1/2}$ for
$\a = 1/2,3/2,5/2,\ldots$.

Finally, as $y \to 1^+$ and $(\a,\b) \notin E^{\mathbf{Q}}$, we have
\begin{equation} \label{as_Q}
\mathbf{Q}_{\a-1/2}^{1/2-\a-\b}(y) \simeq
	\begin{cases}
		(y-1)^{(1/2-\a-\b)/2}\frac{1}{\Gamma(2\a+\b)}, & \qquad \a+\b >1/2, \\
		-\log(y-1)\frac{1}{\Gamma(\a+1/2)}, & \qquad \a+\b = 1/2, \\
		(y-1)^{(\a+\b-1/2)/2}\frac{1}{\Gamma(1-\b)}, & \qquad \a+\b < 1/2,
	\end{cases}
\end{equation}
whereas for $(\a,\b) \in E^{\mathbf{Q}}$ (see also \eqref{Q_expl_3ss} with $m=1=\b$)
$$
\mathbf{Q}_{\a-1/2}^{1/2-\a-\b}(y) \simeq
	\begin{cases}
		(y-1)^{(\a+\b-1/2)/2}, & \qquad \a+\b \ge 1/2, \\
		(y-1)^{(1/2-\a-\b)/2}, & \qquad \a+\b < 1/2.
	\end{cases}
$$

Neglecting signs, the asymptotics at $-1^+$ and $1^+$ can be written in a more compact way, respectively
$$
\big|\mathsf{P}_{\a-1/2}^{1/2-\a-\b}(y)\big| \simeq
	\begin{cases}
		(1+y)^{-|\a+\b-1/2|/2}, & (\a,\b) \notin E^{\mathsf{P}}, \; \a+\b \neq 1/2, \\
		- \log(1+y), & (\a,\b) \notin E^{\mathsf{P}}, \; \a+\b = 1/2, \\
		(1+y)^{|\a+\b-1/2|/2}, & (\a,\b) \in E^{\mathsf{P}},
	\end{cases}
$$
and
$$
\big| \mathbf{Q}_{\a-1/2}^{1/2-\a-\b}(y) \big| \simeq
	\begin{cases}
		(y-1)^{-|\a+\b-1/2|/2}, & (\a,\b) \notin E^{\mathbf{Q}}, \; \a+\b \neq 1/2, \\
		- \log(y-1), & (\a,\b) \notin E^{\mathbf{Q}}, \; \a+\b = 1/2, \\
		(y-1)^{|\a+\b-1/2|/2}, & (\a,\b) \in E^{\mathbf{Q}}.
	\end{cases}
$$

%%%%
\subsection{Zeros of $\mathsf{P}_{\a-1/2}^{1/2-\a-\b}$ and $\mathbf{Q}_{\a-1/2}^{1/2-\a-\b}$}
\label{ss:zero}
We keep considering these functions on $(-1,1)$ and $(1,\infty)$, respectively.
Since we are going to obtain possibly sharp estimates in terms of asymptotics of these functions,
it is important to know if, given $\a$ and $\b$, they have zeros. If this is not the case,
asymptotics invoked in Section \ref{ss:asym}, see \eqref{as_P_reg} and \eqref{as_Q_reg},
imply that the functions are strictly positive.

We will prove the following. 
\begin{propo} \label{prop:zeros}
Let $\a > -1$ and $\a + \b > -1/2$.
\begin{itemize}
\item[(a)]
The function $\mathsf{P}_{\a-1/2}^{1/2-\a-\b}$ has no zeros in $(-1,1)$ if $\a \ge -1/2$ and $\b \ge 0$
or $\a < -1/2$ and $\b > -2\a$ or $\a \le 1/2$ and $\b < 0$. Otherwise, $\mathsf{P}_{\a-1/2}^{1/2-\a-\b}$
has at least one zero in $(-1,1)$.
\item[(b)]
The function $\mathbf{Q}_{\a-1/2}^{1/2-\a-\b}$ has exactly one zero if $\a < -1/2$ and $1 < \b < -2\a$.
Otherwise, $\mathbf{Q}_{\a-1/2}^{1/2-\a-\b}$ has no zeros in $(1,\infty)$.
\end{itemize}
\end{propo}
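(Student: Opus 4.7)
The plan is to reduce both parts to sign-tracking for hypergeometric expressions via the representations \eqref{P_form}, \eqref{Q_form1}, and \eqref{Q_form2}, combined with the endpoint asymptotics of Section~\ref{ss:asym} and the explicit identities of Section~\ref{ss:expl}. For part~(a) the power factor in \eqref{P_form} is positive, so the sign of $\mathsf{P}_{\a-1/2}^{1/2-\a-\b}$ on $(-1,1)$ equals that of $\mathbf{F}(\a+1/2,-\a+1/2;\a+\b+1/2;w)$ at $w=(1-y)/2\in(0,1)$. I would establish absence of zeros in the three claimed regions by producing a manifestly positive representation of $\mathbf{F}$: for $\a>-1/2$ and $\b>0$ apply Euler's integral with pivot $b=\a+1/2$, $c-b=\b$; for $\a<1/2$ and $2\a+\b>0$ (covering $\a<-1/2$, $\b>-2\a$) apply it with pivot $b=-\a+1/2$, $c-b=2\a+\b$; and for $-1/2\le\a\le 1/2$ observe that all Pochhammer symbols in the defining series of ${_2F_1}$ are non-negative and the leading term equals $1$. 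The boundary situations ($\a\in\{-1/2,1/2\}$ or $\b=0$) are disposed of by the explicit identities \eqref{P_expl_1} and \eqref{P_expl_3s}.

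For the complementary zero regions, namely I ($\a>1/2$, $-\a-1/2<\b<0$) and II ($\a<-1/2$, $|\a|-1/2<\b<-2\a$), I would combine a sign-change argument with a propagation argument. By \eqref{as_P_reg} one has positivity near $y=1^-$, while \eqref{as_P} together with its $E^{\mathsf{P}}$-substitutes shows that the sign at $y=-1^+$ is negative on a dense subset of each zero region; the intermediate value theorem then yields at least one zero. The remaining sub-cases, where the endpoint asymptotics happen to have matching signs, would be propagated by continuity of zeros, taking as reference point the explicit ultraspherical case $\b=-n$, $n\ge 1$, $\a>n-1/2$, in which \eqref{P_expl_1} exhibits exactly $n$ zeros in $(-1,1)$.

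For part~(b), when $\a>-1/2$ the Euler integral applied to \eqref{Q_form1} with pivot $b=\a+1/2$ (so $c-b=\a+1/2>0$) yields a strictly positive integrand on $(1,\infty)$ (note $2/(1-y)<0$ so $(1-ut)^{\b-1}>0$ for any $\b$), hence $\mathbf{Q}_{\a-1/2}^{1/2-\a-\b}>0$ identically. For $\a<-1/2$ outside the strip $\{1<\b<-2\a\}$ I would use the two-term expansion \eqref{Q_form2}, applying Euler's integral to the second hypergeometric (its prerequisites $-\a+1/2>0$ and $-2\a-\b>0$ are satisfied whenever $\b<-2\a$, and for $\b<1$ the first summand admits a parallel representation), and then verify positivity of the signed combination from the relative size/signs in \eqref{as_Q}. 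Inside the strip, \eqref{as_Q_reg} gives a positive value at $\infty$, while \eqref{as_Q} yields a sign at $y\to 1^+$ coming from a gamma factor whose argument lies in $(-1,0)$ (for $\b\in(1,-2\a)$ with $\a>-1$), hence negative, so the intermediate value theorem gives at least one zero. Uniqueness would follow from monotonicity of $\mathbf{Q}_{\a-1/2}^{1/2-\a-\b}$ after its first sign change, which I would extract from a Wronskian/Sturm--Picone argument for the underlying Legendre equation.

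The main obstacle is the existence-and-uniqueness half: covering, in part~(a), the sub-regions of I and II where the endpoint asymptotics coincide in sign, which will force the homotopy argument from the explicit ultraspherical reference $\b=-n$ to be executed carefully; and, in part~(b), promoting ``at least one zero'' to ``exactly one zero'', which seems to require an ODE-based monotonicity tool rather than purely asymptotic information.
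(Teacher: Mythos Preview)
Your approach differs substantially from the paper's. The paper does not attempt to prove positivity or count zeros from scratch; instead it invokes the zero-counting criteria for associated Legendre functions recorded in the NIST Handbook, specifically \cite[Section~14.16(ii)]{handbook} for $\mathsf{P}$ and \cite[Section~14.16(iii)]{handbook} for $\mathbf{Q}$, together with the connection formulas \cite[14.9.5, 14.9.11, 14.9.16]{handbook} (the last being Whipple's formula, which converts the $\mathbf{Q}$-question for $\a<-1/2$ into a $P$-question). Only two small residual regions in part~(a) and the triangle in part~(b) are handled by the asymptotic sign-change argument you describe. Your route via Euler's integral for ${_2F_1}$ and the series with nonnegative Pochhammer symbols is a nice, self-contained alternative for the \emph{no-zero} half of~(a) and for $\a>-1/2$ in~(b); it avoids external references, at the cost of a longer argument.

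The genuine gaps are exactly the ones you flag, and they are not minor. In~(a), the endpoint signs from \eqref{as_P} do \emph{not} have opposite signs on the whole of the claimed zero regions (for instance, when $\a+\b<1/2$ the constant $\sin[\pi(1/2-\a)]$ is positive for $\a\in(3/2,5/2)$), so your IVT argument covers only patches. The ``propagation by continuity of zeros from $\b=-n$'' idea does not close this on its own: as $(\a,\b)$ vary, zeros can exit through $y=\pm 1$, and you would need a boundary barrier (e.g.\ a uniform sign of $\mathsf{P}$ near $\pm1$ along the homotopy) to prevent that. The paper sidesteps this entirely by quoting the explicit zero counts in \cite[Section~14.16(ii)]{handbook}. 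In~(b), for $\a<-1/2$ outside the strip your plan to show positivity of the signed two-term combination in \eqref{Q_form2} using only the endpoint asymptotics \eqref{as_Q} cannot work: those asymptotics constrain behavior near $y=1^+$ only, not on all of $(1,\infty)$. Likewise, a Sturm--Picone/Wronskian argument will not by itself yield uniqueness of the zero of a single solution (it compares zeros of \emph{two} solutions). The paper obtains both facts at once from \cite[Section~14.16(iii)]{handbook} (no zeros for $\a>-1/2$, at most one for $\a<-1/2$) and then uses Whipple's formula \cite[14.9.16]{handbook} together with \cite[14.9.11]{handbook} to rule out that single potential zero when $\b<1$ or $\b>-2\a$. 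If you want a handbook-free proof, you will need a genuinely different tool for these pieces---for example, an integral representation of $\mathbf{Q}_{\a-1/2}^{1/2-\a-\b}$ valid for $\a<-1/2$ that is manifestly sign-definite, or a monotonicity statement for the single solution (not a comparison theorem).
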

See Figures \ref{fig:zeroP} and \ref{fig:zeroQ} where gray regions together
with bold straight lines and black dots represent pairs $(\a,\b)$ for which $\mathsf{P}_{\a-1/2}^{1/2-\a-\b}$
has no zeros in $(-1,1)$ and $\mathbf{Q}_{\a-1/2}^{1/2-\a-\b}$ has no zeros in $(1,\infty)$, respectively.

\begin{figure}
\centering
\begin{tikzpicture}[scale=5]
\fill[black!10!white]
(2.3,1.1) -- (2.3,0.9) -- (2.5,0.5) -- (2.5,0.1) -- (2.9,-0.3)-- (2.9,0.1) --(3.7,0.1) -- (3.7,1.1)  -- cycle;
\draw[ultra thick] (2.5,0.48) -- (2.5,0.1);
\draw[ultra thick] (2.9,0.1) -- (3.7,0.1);
\draw (3.13,0.55) node {No zeros};
\draw[very thick,dashed] (2.1,0.5) -- (3.1,-0.5);
\draw (3.23,-0.35) node {$\alpha+\beta=-1/2$};
\draw[very thick, dashed] (2.3,0.9) -- (2.49,0.52);
\draw[very thick, dashed] (2.3,1.1) -- (2.3,0.3);
\draw[ultra thick] (2.9,0.1) -- (2.9,-0.3);
\filldraw[fill=black] (2.5,0.5) circle(0.6pt);
\filldraw[fill=white] (2.5,0.1) circle (0.6pt);
\filldraw[fill=white] (2.9,-0.3) circle (0.6pt);
\filldraw[fill=black] (2.9,0.1) circle(0.6pt);
\draw[arrows=-angle 60] (2.7,-0.5) -- (2.7,1.15);
\draw[very thin] (2,0.1) -- (2.48,0.1);
\draw[arrows=-angle 60] (2.52,0.1) -- (3.75,0.1);
\draw[very thin] (2.3,0.1) -- (2.3,0.13);
\draw[very thin] (3.1,0.1) -- (3.1,0.13);
\draw[very thin] (3.5,0.1) -- (3.5,0.13);
\draw[very thin] (2.7,0.9) -- (2.73,0.9);
\draw[very thin] (2.7,0.5) -- (2.73,0.5);
\draw[very thin] (2.7,-0.3) -- (2.73,-0.3);
\node at (2.3,0) {$-1$};
\node at (2.75,0.05) {$0$};
\node at (3.1,0) {$1$};
\node at (3.5,0) {$2$};
\node at (2.8,-0.3) {$-1$};
\node at (2.8,0.5) {$1$};
\node at (2.8,0.9) {$2$};
\node at (3.8,0.1) {$\alpha$};
\node at (2.7,1.2) {$\beta$};
\end{tikzpicture}
\caption{Zeros of $\mathrm{P}_{\alpha-1/2}^{1/2-\alpha-\beta}$ in $(-1,1)$} \label{fig:zeroP}
\end{figure}
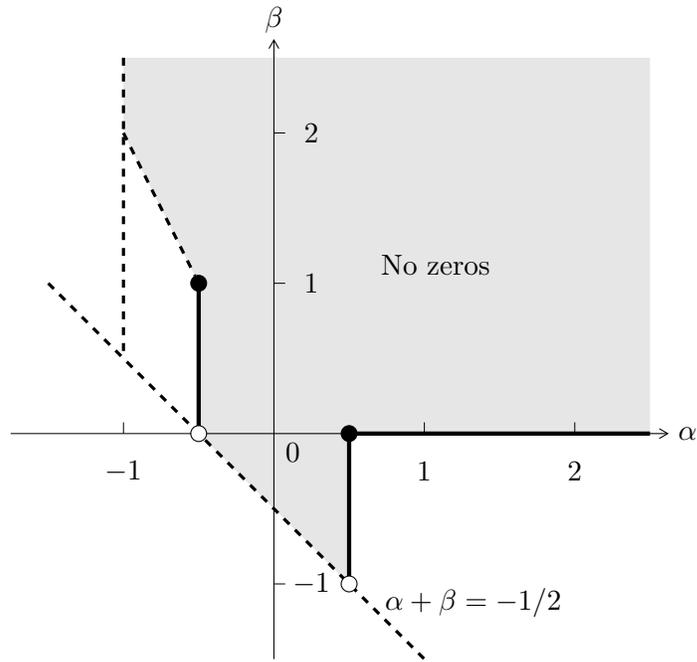

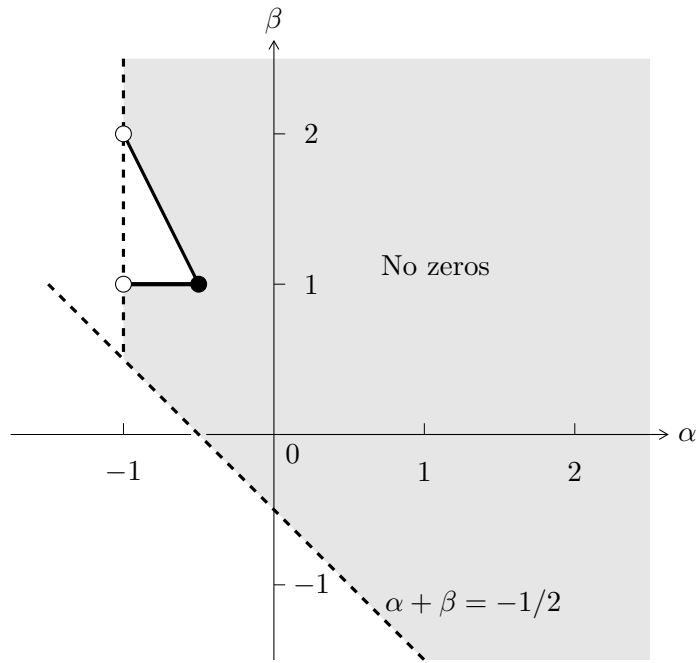
\begin{figure}
\centering
\begin{tikzpicture}[scale=5]
\fill[black!10!white]
(2.3,1.1) -- (2.3,0.9) -- (2.5,0.5) -- (2.3,0.5)  -- (2.3,0.3)--  
(3.1,-0.5)--  (3.7,-0.5)-- (3.7,1.1)  -- cycle;
\draw[ultra thick] (2.5,0.5) -- (2.3,0.5);
\draw (3.13,0.55) node {No zeros};
\draw[very thick,dashed] (2.1,0.5) -- (3.1,-0.5);
\draw (3.23,-0.35) node {$\alpha+\beta=-1/2$};
\draw[very thick] (2.3,0.9) -- (2.5,0.5);
\draw[very thick, dashed] (2.3,1.1) -- (2.3,0.3);
\draw[arrows=-angle 60] (2.7,-0.5) -- (2.7,1.15);
\draw[very thin] (2,0.1) -- (2.48,0.1);
\draw[arrows=-angle 60] (2.52,0.1) -- (3.75,0.1);
\draw[very thin] (2.3,0.1) -- (2.3,0.13);
\draw[very thin] (3.1,0.1) -- (3.1,0.13);
\draw[very thin] (3.5,0.1) -- (3.5,0.13);
\draw[very thin] (2.7,0.9) -- (2.73,0.9);
\draw[very thin] (2.7,0.5) -- (2.73,0.5);
\draw[very thin] (2.7,-0.3) -- (2.73,-0.3);
\filldraw[fill=black] (2.5,0.5) circle(0.6pt);
\filldraw[fill=white] (2.3,0.9) circle(0.6pt);
\filldraw[fill=white] (2.3,0.5) circle(0.6pt);
\node at (2.3,0) {$-1$};
\node at (2.75,0.05) {$0$};
\node at (3.1,0) {$1$};
\node at (3.5,0) {$2$};
\node at (2.8,-0.3) {$-1$};
\node at (2.8,0.5) {$1$};
\node at (2.8,0.9) {$2$};
\node at (3.8,0.1) {$\alpha$};
\node at (2.7,1.2) {$\beta$};
\end{tikzpicture}
\caption{Zeros of $\mathbf{Q}_{\alpha-1/2}^{1/2-\alpha-\beta}$ in $(1,\infty)$} \label{fig:zeroQ}
\end{figure}

\begin{proof}[{Proof of Proposition \ref{prop:zeros}}]
We first prove (a). From the explicit formulas \eqref{P_expl_1}, \eqref{P_expl_2}, \eqref{P_expl_3s}
and \eqref{P_expl_3} we know, respectively, that there is no zero when $\b =0$ or $\b = -2\a$
or $\a = -1/2$ or [$\a=1/2$ and $\b < 0$]. Further, by the first item (a) in
\cite[Section 14.16(ii)]{handbook},
it follows that there is no zero if $\a,\b > 0$ and $\a+\b \ge 1/2$. Combining the same condition
with \cite[14.9.5]{handbook} we infer that no zeros may occur if $\a < 0$ and $\b > -2\a$ and
$\a+\b \ge 1/2$. The lack of zeros in case $-1/2< \a < 1/2$ and $\a+\b < 1/2$ follows from
item (c) in \cite[Section 14.16(ii)]{handbook}, combined with \cite[14.9.5]{handbook} when $\a < 0$ comes
into play. Altogether, the above shows that there are no zeros for $(\a,\b)$ indicated in item (a)
of the proposition.

We now treat $(\a,\b)$ for which there is at least one zero. The explicit formula \eqref{P_expl_3}
shows that this is the case when $\a=1/2,3/2,5/2,\ldots$ and $\b < 1/2-\a$ (here we use the standard
fact that Jacobi polynomials $\mathbb{P}_m^{\gamma,\delta}$ have $m$ zeros in the interval $(-1,1)$
whenever $\gamma,\delta>-1$). Further, according to the first item (a) in \cite[Section 14.16(ii)]{handbook},
there is at least one zero provided that $\a+\b \ge 1/2$ and $\b < 0$. The regions defined by the conditions
$n+1/2 < \a < n+3/2$, $\a+\b < 1/2$, $2\a+\b> n+1$, $n \in \mathbb{N}$, are covered by the first item
(b) in \cite[Section 14.16(ii)]{handbook}, while the adjacent regions $n+1/2 < \a < n+3/2$, $2\a+\b \le n+1$,
$n \ge 1$, by the second item (a) in \cite[Section 14.16(ii)]{handbook}. The remaining two regions defined by
$\a < -1/2$, $\b < -2\a$ and $1/2 < \a < 3/2$, $2\a+\b \le 1$, respectively, can be dealt with the aid
of the asymptotics from Section \ref{ss:asym}. Indeed, taking into account \eqref{as_P_reg}, it is
enough, for continuity reason, to check that in the regions in question the asymptotic expressions at
$-1^+$ are negative. But this immediately follows from \eqref{as_P}.

Passing to (b), according to \cite[Section 14.16(iii)]{handbook}, $\mathbf{Q}_{\a-1/2}^{1/2-\a-\b}$
has no zeros in $(1,\infty)$ when $\a > -1/2$ and at most one zero when $\a < -1/2$.
Moreover, by the explicit formulas \eqref{Q_expl_3s}, \eqref{Q_expl_2} and \eqref{Q_expl_3ss}
we know, respectively, that no zeros occur when $\a = -1/2$ or $\a < -1/2$ and $\b = -2\a$ or
$\a < -1/2$ and $\b=1$. Further, the lack of zeros for $\a < -1/2$ and [$\b < 1$ or $\b > -2\a$]
follows by applying Whipple's formula \cite[14.9.16]{handbook} and then using the criterion from
\cite[Section 14.16(iii)]{handbook} for the associated Legendre function of the first kind.
More precisely, this covers $\b > -2\a$ and to treat $\b < 1$ one combines the criterion just
mentioned with the formula \cite[14.9.11]{handbook}.
Finally, there is one zero in the triangle $\a < -1/2$, $1 < \b < -2\a$,
in view of the asymptotic expressions \eqref{as_Q_reg} and \eqref{as_Q}. 
The conclusion follows.
\end{proof}

It is worth observing that the fact that $\mathsf{P}_{\a-1/2}^{1/2-\a-\b}$ and
$\mathbf{Q}_{\a-1/2}^{1/2-\a-\b}$ have no zeros in $(-1,1)$ and $(1,\infty)$, respectively,
when $\a > -1/2$ and $\b > 0$, is a straightforward consequence of \eqref{ifK} and \eqref{I_for}.

%%%
\subsection{Estimates of the kernel $K_t^{\a,\b}(x,z)$}
Using \eqref{mIdef}, \eqref{I_for} and
the asymptotics from Section \ref{ss:asym} and taking into account continuity of the functions
under consideration, we conclude the following estimates for the kernel.

\noindent \textbf{Region} $|x-z| < t < x+z$. If $(\a,\b) \notin E^{\mathsf{P}}$, then
\begin{align*}
& |K_t^{\a,\b}(x,z)| \\ 
& \quad \lesssim \frac{(xz)^{\b-1}}{t^{2\a+2\b}} \big[ (\sin v)^2 (1-\cos v)\big]^{(\a+\b-1/2)/2}
	\begin{cases}
		(1+\cos v)^{-(\a+\b-1/2)/2}, & \a + \b > 1/2, \\
		1-\log(1+\cos v), & \a + \b =1/2, \\
		(1+ \cos v)^{(\a+\b-1/2)/2}, & \a + \b < 1/2,
	\end{cases}
\end{align*}
whereas in case $(\a,\b) \in E^{\mathsf{P}}$ one has a different and simpler (no logarithmic case) bound
\begin{align*}
& |K_t^{\a,\b}(x,z)| \\
& \quad \lesssim \frac{(xz)^{\b-1}}{t^{2\a+2\b}} \big[ (\sin v)^2 (1-\cos v)\big]^{(\a+\b-1/2)/2}
	\begin{cases}
		(1+\cos v)^{(\a+\b-1/2)/2}, & \a + \b \ge 1/2, \\
		(1+ \cos v)^{-(\a+\b-1/2)/2}, & \a + \b < 1/2.
	\end{cases}
\end{align*}
Here $v$ is determined by \eqref{def_v}. For those $(\a,\b)$ for which $\mathsf{P}_{\a-1/2}^{1/2-\a-\b}$
has no zeros in $(-1,1)$ (see Proposition \ref{prop:zeros}) the estimates are sharp, one can replace
$\lesssim$ by $\simeq$ and, moreover, suppress the absolute value of the kernel.
Otherwise, the estimates are sharp provided that $\cos v$ is in a (sufficiently small) neighborhood
of $-1$ or $1$.

\noindent \textbf{Region} $x+z < t$. If $(\a,\b) \notin E^{\mathbf{Q}}$
and in addition $-\b \notin \mathbb{N}$, then
\begin{align*}
& |(-1)^{\lfloor \b \wedge 0 \rfloor}K_t^{\a,\b}(x,z)| \\ 
& \quad \lesssim \frac{(xz)^{\b-1}}{t^{2\a+2\b}} (\sinh u)^{\a+\b-1/2} (\cosh u +1)^{-\a-1/2}
	\begin{cases}
		\left({\frac{\cosh u -1}{\cosh u +1}}\right)^{-(\a+\b-1/2)/2}, & \a + \b > 1/2, \\
		1-\log\left({\frac{\cosh u -1}{\cosh u +1}}\right), & \a + \b =1/2, \\
		\left({\frac{\cosh u -1}{\cosh u +1}}\right)^{(\a+\b-1/2)/2}, & \a + \b < 1/2,
	\end{cases}
\end{align*}
and in case $(\a,\b) \in E^{\mathbf{Q}}$ we have
\begin{align*}
& |K_t^{\a,\b}(x,z)| \\ 
& \quad \lesssim \frac{(xz)^{\b-1}}{t^{2\a+2\b}} (\sinh u)^{\a+\b-1/2} (\cosh u +1)^{-\a-1/2}
	\begin{cases}
		\left({\frac{\cosh u -1}{\cosh u +1}}\right)^{(\a+\b-1/2)/2}, & \a + \b \ge 1/2, \\
		\left({\frac{\cosh u -1}{\cosh u +1}}\right)^{-(\a+\b-1/2)/2}, & \a + \b < 1/2.
	\end{cases}
\end{align*}
Here $u$ is determined by \eqref{def_u}. For those $(\a,\b)$ for which $\mathbf{Q}_{\a-1/2}^{1/2-\a-\b}$
has no zeros in $(1,\infty)$ (see Proposition \ref{prop:zeros}) the estimates are sharp, one can replace
$\lesssim$ by $\simeq$ and, moreover, suppress the absolute values.
Otherwise, the estimates are sharp provided that $\cosh v$ is sufficiently large, or sufficiently close
to $1$.

From the above bounds we can readily get estimates of $K_t^{\a,\b}(x,z)$ in terms of $t,x,z$.
To do that, we use the following identities that hold when $|x-z| < t$:
\begin{align*}
1+ \cos v  = \cosh u -1 & = \frac{1}2 \frac{|(x+z)^2-t^2|}{xz}, \\
1- \cos v  = \cosh u +1 & = \frac{1}2 \frac{t^2-(x-z)^2}{xz}, \\
\sin^2 v  = \sinh^2 u & = \frac{1}4 \frac{|(x+z)^2-t^2|}{xz} \frac{t^2-(x-z)^2}{xz}.
\end{align*}
As the outcome, taking also into account \eqref{st5}, we obtain the main result concerning
pointwise estimates of $K_t^{\a,\b}(x,z)$.

\begin{thm} \label{thm:kerest2}
Assume that $\a > -1$ and $\a+\b > -1/2$. Let $t,x,z > 0$.
\begin{itemize}
\item[(1)]
	The kernel $K_t^{\a,\b}(x,z)$ vanishes when $t < |x-z|$.
\item[(2)]
	The following estimates hold uniformly in $|x-z| < t < x+z$.
	\begin{itemize}
		\item[(2a)]
			If $-\b \in \mathbb{N}$ or $2\a+\b = 0$, then
			$$
				|K_t^{\a,\b}(x,z)| \lesssim 
				\frac{(xz)^{-\a-1/2}}{t^{2\a+2\b}} \big[ t^2-(x-z)^2\big]^{\a+\b-1/2}
				\bigg( \frac{(x+z)^2-t^2}{xz}\bigg)^{\a+\b-1/2}.
			$$
		\item[(2b)]
			If $\a+1/2 \in \mathbb{N}$, then
			$$
				|K_t^{\a,\b}(x,z)| \lesssim 
				\frac{(xz)^{-\a-1/2}}{t^{2\a+2\b}} \big[ t^2-(x-z)^2\big]^{\a+\b-1/2}.
			$$
		\item[(2c)]
			For all $(\a,\b)$ not covered by items (2a) and (2b),
			\begin{align*}
				|K_t^{\a,\b}(x,z)| & \lesssim
				\frac{(xz)^{-\a-1/2}}{t^{2\a+2\b}} \big[ t^2-(x-z)^2\big]^{\a+\b-1/2} \\ & \qquad \times
					\begin{cases}
						\left( \frac{(x+z)^2-t^2}{xz}\right)^{\a+\b-1/2}, & \a + \b < 1/2, \\
						1 + \log\left(\frac{4xz}{(x+z)^2-t^2}\right), & \a + \b = 1/2,\\
						1, & \a + \b > 1/2.		
					\end{cases}
			\end{align*}
	\end{itemize}
			The absolute values in (2a)--(2c) can be suppressed and $\lesssim$ can be replaced by $\simeq$
			if and only if $(\a,\b)$ satisfy neither $\a > 1/2$ and $\b < 0$ nor $\a < -1/2$ and $\b < -2\a$.
\item[(3)]
	The following bounds are uniform in $t > x+z$.
		\begin{itemize}
			\item[(3a)] If $-\b \in \mathbb{N}$, then this part of the kernel vanishes.
			\item[(3b)]
				If $2\a+\b = 0$ and $\b \neq 0$, then
				$$
					K_t^{\a,\b}(x,z) \simeq
					\frac{1}{t^{2\a+2\b}} \big[ t^2-(x-z)^2\big]^{\b-1}
					\bigg( \frac{t^2-(x+z)^2}{t^2-(x-z)^2}\bigg)^{\a+\b-1/2}.
				$$
			\item[(3c)]
				If $\b=1$, then
				$$
					K_t^{\a,\b}(x,z) \simeq
					\frac{1}{t^{2\a+2\b}} \big[ t^2-(x-z)^2\big]^{\b-1}.
				$$
			\item[(3d)]
				For all $(\a,\b)$ not covered by items (3a)--(3c),
					\begin{align*}
					|K_t^{\a,\b}(x,z)| & \lesssim
					\frac{1}{t^{2\a+2\b}} \big[ t^2-(x-z)^2\big]^{\b-1} \\ & \qquad \times
						\begin{cases}
							\left( \frac{t^2-(x+z)^2}{t^2-(x-z)^2}\right)^{\a+\b-1/2}, & \a + \b < 1/2, \\
							1 + \log\left(\frac{t^2-(x-z)^2}{t^2-(x+z)^2}\right), & \a + \b = 1/2, \\
							1, & \a + \b > 1/2.
						\end{cases}						
					\end{align*}
		\end{itemize}
		The relation $\lesssim$ in (3d) can be replaced by $\simeq$
				if and only if $(\a,\b)$ does not satisfy $1< \b < -2\a$. If this is the case, then also
				the absolute value can be suppressed, provided that the kernel is multiplied
				by $(-1)^{\lfloor \b \wedge 0\rfloor}$.
\end{itemize}
\end{thm}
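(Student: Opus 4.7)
The plan is to assemble the theorem from the explicit representation \eqref{I_for} combined with the Legendre function asymptotics worked out in Section \ref{ss:asym} and the zero analysis of Proposition \ref{prop:zeros}. Part (1) is immediate from the first case of \eqref{I_for}. For Parts (2) and (3), the starting point is to substitute \eqref{I_for} into \eqref{mIdef}, getting
$$
K_t^{\a,\b}(x,z) = c_{\a,\b}\, \frac{(xz)^{\b-1}}{t^{2\a+2\b}} \,(\sin v)^{\a+\b-1/2}\, \mathsf{P}_{\a-1/2}^{1/2-\a-\b}(\cos v)
$$
when $|x-z|<t<x+z$, and an analogous expression with $(\sinh u)^{\a+\b-1/2}\mathbf{Q}_{\a-1/2}^{1/2-\a-\b}(\cosh u)/\Gamma(\b)$ when $t>x+z$, where $v,u$ are defined by \eqref{def_v}--\eqref{def_u}.

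For Part (2), I would split along $(\a,\b)\notin E^{\mathsf{P}}$ versus $(\a,\b)\in E^{\mathsf{P}}$, and in each case combine the two endpoint asymptotics \eqref{as_P_reg} at $\cos v\to 1^-$ and \eqref{as_P} (or its $E^{\mathsf{P}}$-variant) at $\cos v\to -1^+$ with continuity of $\mathsf{P}$ on compact subsets of $(-1,1)$ to produce a single uniform bound in $\cos v$. The identities
$$
1-\cos v = \tfrac{1}{2}\tfrac{t^2-(x-z)^2}{xz},\quad 1+\cos v = \tfrac{1}{2}\tfrac{(x+z)^2-t^2}{xz},\quad \sin^2 v = (1-\cos v)(1+\cos v)
$$
then translate everything into $t,x,z$ variables. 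The three sub-cases arise naturally: items (2a) and (2b) absorb the two components of $E^{\mathsf{P}}$, where the asymptotic at $-1^+$ behaves like $(1+\cos v)^{+|\a+\b-1/2|/2}$ instead of blowing up, giving respectively the extra $((x+z)^2-t^2)/xz$ factor or the bare power; item (2c) is the generic case with its trichotomy inherited from the three regimes $\a+\b\gtreqless 1/2$ in \eqref{as_P}, and the logarithm coming from the middle line there. Sharpness (replacing $\lesssim$ by $\simeq$ and removing absolute values) follows from Proposition \ref{prop:zeros}(a): whenever $\mathsf{P}$ has no zeros in $(-1,1)$, the two one-sided asymptotics combine to a two-sided comparison with definite sign.

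Part (3) proceeds by the same template with $\mathbf{Q}$ in place of $\mathsf{P}$, using \eqref{as_Q_reg} at $\cosh u\to\infty$, \eqref{as_Q} (or its $E^{\mathbf{Q}}$-variant) at $\cosh u\to 1^+$, and the analogous identities for $\cosh u\pm 1$ and $\sinh^2 u$ valid on $t>x+z$. Item (3a) follows because $\mathcal{C}(\b)=\pi/\Gamma(\b)$ vanishes when $-\b\in\mathbb{N}$; items (3b) and (3c) correspond to the two components of $E^{\mathbf{Q}}$, where the explicit formulas \eqref{Q_expl_2} and \eqref{Q_expl_3ss} express $\mathbf{Q}$ as an elementary function of definite sign, so $\simeq$ holds unconditionally and the absolute value can be suppressed. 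Item (3d) is generic, and sharpness outside the triangle $\a<-1/2,\ 1<\b<-2\a$ follows from Proposition \ref{prop:zeros}(b); the sign factor $(-1)^{\lfloor \b\wedge 0\rfloor}$ is read off from $\mathcal{C}(\b)$ together with the positivity of $\mathbf{Q}$ at infinity given by \eqref{as_Q_reg}.

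The main obstacle will be the bookkeeping of cases: verifying that the exceptional sets $E^{\mathsf{P}}$ and $E^{\mathbf{Q}}$ together with the vanishing locus $\{-\b\in\mathbb{N}\}$ of $\mathcal{C}(\b)$ match exactly the sub-items of the theorem, and that when combining the two one-sided asymptotics into a global estimate on $(-1,1)$ or $(1,\infty)$ the interior contribution (away from both singular endpoints) is correctly controlled by continuity and by the zero-free criterion of Proposition \ref{prop:zeros}. A secondary subtlety is ensuring that the logarithmic enhancement at $\a+\b=1/2$ is compatible with the power-type bound on the other endpoint, so that the stated uniform estimates indeed hold up to the boundary $t=x+z$ from both sides.
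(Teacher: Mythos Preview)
Your proposal is correct and follows essentially the same route as the paper: it too derives the theorem by feeding the Legendre function representation \eqref{I_for} into \eqref{mIdef}, combining the endpoint asymptotics of Section \ref{ss:asym} with continuity on the interior, using the identities for $1\pm\cos v$ and $\cosh u\pm 1$ to pass to $t,x,z$ variables, and invoking Proposition \ref{prop:zeros} for the sharpness statements. One minor bookkeeping point: item (3c) covers all $\b=1$, not only the $E^{\mathbf{Q}}$ component $\{\a+\b<1/2,\ \b=1\}$, but since \eqref{Q_expl_3ss} with $m=1$ is valid for every $\a$ this causes no difficulty.
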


Note that some estimates in Theorem \ref{thm:kerest2} can be written in a simpler way, by plugging in
specific values of the parameters, nevertheless we keep the general formulas for the sake of better
comparison between the cases. Further, in all the cases the estimates are sharp
($\lesssim$ can be replaced by $\simeq$) provided that
$(t,x,z)$ are restricted to certain regions.
More precisely, this happens when $t$ is sufficiently close to $|x-z|$ or $x+z$ or $\infty$, that is
$$
\dist(t,|x-z|) < \varepsilon \sqrt{xz}\quad \textrm{or} \quad \dist(t,x+z) < \varepsilon \sqrt{xz} \quad
	\textrm{or} \quad t > \varepsilon^{-1} \sqrt{xz}
$$
with $\varepsilon > 0$ small enough.

%%%%%%%%%%%%%%%%%%%%%%%%%%%%%%%%%%%%%%%%%%%%%%%%%%%%%%%%%%%%%%%%
\section{$L^2$-boundedness of the integral operator $M_t^{\a,\b}$} \label{sec:L2}
%%%%%%%%%%%%%%%%%%%%%%%%%%%%%%%%%%%%%%%%%%%%%%%%%%%%%%%%%%%%%%%%

The estimates of Theorem \ref{thm:kerest2} allow us to verify directly the $L^2$-boundedness of the
integral operator $M_t^{\a,\b}$.
\begin{propo} \label{prop:L2}
Let $\a > -1$ and $\a+\b > -1/2$. Then, for each $t > 0$,
$L^2(d\mu_{\a}) \subset \domain M_t^{\a,\b}$ and $M_t^{\a,\b}$ is bounded on $L^2(d\mu_{\a})$.
\end{propo}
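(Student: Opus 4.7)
The strategy is to apply Schur's test directly to the integral operator with absolute kernel $|K_t^{\a,\b}(x,z)|$, using the pointwise bounds established in Theorem~\ref{thm:kerest2}. A side benefit of this approach is that the same argument yields both the domain inclusion and the $L^2$ operator bound simultaneously.

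First, I would use the homogeneity property \eqref{homo} to reduce to the case $t=1$. The dilation $D_sf(x):=s^{\a+1}f(sx)$ is a unitary on $L^2(d\mu_\a)$, and a direct calculation using \eqref{homo} shows that $M_t^{\a,\b}=D_{1/t}\,M_1^{\a,\b}\,D_t$; hence $M_t^{\a,\b}$ is bounded on $L^2(d\mu_\a)$ precisely when $M_1^{\a,\b}$ is, with identical operator norm and domain (up to dilation). Next, the kernel $K_1^{\a,\b}(x,z)$ is symmetric in $(x,z)$ by inspection of the defining integral, so Schur's test reduces to exhibiting a single positive weight $h(z)=z^\gamma$ and a constant $C$ with
$$
\int_0^\infty \bigl|K_1^{\a,\b}(x,z)\bigr|\, z^\gamma\, d\mu_\a(z) \leq C\, x^\gamma, \qquad x>0.
$$
Such a bound automatically forces the integral defining $M_1^{\a,\b}f$ to converge absolutely for a.e.\ $x$ whenever $f\in L^2(d\mu_\a)$, giving $L^2(d\mu_\a)\subset \domain M_1^{\a,\b}$, and simultaneously yields $\|M_1^{\a,\b}\|_{L^2\to L^2}\leq C$.

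To verify this Schur condition I would split the integral according to the support of $K_1^{\a,\b}(x,\cdot)$, which is contained in $(\max(0,x-1),\,x+1)$. For $x\geq 1$ only item (2) of Theorem~\ref{thm:kerest2} contributes, and the substitution $z=x+w$, $w\in(-1,1)$, reduces matters to a beta-type integral in the spirit of Lemmas~\ref{est_I}--\ref{est_J}. For $x<1$ one further splits into the subregions $z\in(1-x,\,1+x)$, again treated via item (2), and $z\in(0,\,1-x)$, treated via item (3) after setting $u=1-x-z$. The logarithmic case $\a+\b=1/2$ and the exceptional subcases (2a)--(2b), (3b)--(3c) are no worse than the generic subcases (2c), (3d) and are absorbed into the same framework.

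The main obstacle is the choice of $\gamma$. For $x$ large any $\gamma\in\RR$ works, since the dominant contribution comes from the diagonal $z\simeq x$ and self-scales as $x^\gamma$ by a scaling/homogeneity argument. For $x$ small, however, the contributions from the two subregions mix the singular exponents $\a+\b-1/2$ (at $t=x+z$, coming from item (2c)) and $\b-1$ (at $t=x+z$, from item (3d)), possibly with logarithmic corrections, and $\gamma$ must sit in a narrow $(\a,\b)$-dependent window for the inequality to close. A safe choice is $\gamma=\min(0,\b)$, which, under the constraints $\a>-1$ and $\a+\b>-1/2$, aligns all the power factors uniformly; the remaining verification is a routine but case-heavy application of Lemmas~\ref{est_I}--\ref{est_J}.
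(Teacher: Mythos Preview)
Your reduction to $t=1$ via the dilation group and the plan to apply a weighted Schur test are sound, and this is also how the paper begins. However, the claim that a single power weight $h(z)=z^\gamma$ suffices is incorrect: no such weight closes the Schur test when $-1<\a<-1/2$ and $-\a-\tfrac12<\b<-2\a-1$, and your specific choice $\gamma=\min(0,\b)$ already fails there.

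Take for instance $\a=-\tfrac{9}{10}$, $\b=\tfrac12$ (so $\a+\b=-\tfrac25>-\tfrac12$ and the sharpness clause in Theorem~\ref{thm:kerest2}(3d) applies). In the region $x+z<1$ one has
$|K_1^{\a,\b}(x,z)|\simeq[1-(x-z)^2]^{-\a-1/2}[1-(x+z)^2]^{\a+\b-1/2}$.
Letting $x\to1^-$ and substituting $z=(1-x)u$, $u\in(0,1)$, the Schur integral $\int_0^{1-x}|K_1^{\a,\b}(x,z)|\,z^{\gamma+2\a+1}\,dz$ scales like $(1-x)^{\gamma+2\a+\b+1}$, so boundedness by $x^\gamma\simeq1$ forces $\gamma\ge-2\a-\b-1=0.3$. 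On the other hand, letting $x\to0^+$ with $z$ ranging over a fixed compact subinterval of $(0,1)$ gives $|K_1^{\a,\b}(x,z)|\to(1-z^2)^{\b-1}$, so the Schur integral tends to a positive constant; the requirement that this be $\lesssim x^\gamma$ forces $\gamma\le0$. These two constraints are incompatible, and your choice $\gamma=\min(0,\b)=0$ is ruled out by the first.

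The paper's proof meets precisely this obstruction. After the same reduction to $t=1$ and a split into the regions $\{|x-z|<1,\ x+z>1\}$ and $\{x+z<1\}$, it notes that the (unweighted) Schur test on the second piece fails for $-1<\a<-1/2$ and therefore decomposes it further according to $|x-z|\le\tfrac12$ versus $|x-z|>\tfrac12$. The near-diagonal part still yields to Schur, but the far off-diagonal part is handled by a genuine $L^2$ argument based on Minkowski's integral inequality rather than pointwise domination. That extra step is what your sketch is missing.
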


This together with Proposition \ref{prop:coinc} implies the following.
\begin{coro} \label{cor:L2}
Let $\a > -1$ and $\a+\b > -1/2$. Then the operators $\mathcal{M}_t^{\a,\b}$ and $M_t^{\a,\b}$ coincide
on $L^2(d\mu_{\a})$.
\end{coro}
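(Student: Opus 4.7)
The plan is straightforward: combine $L^2$-boundedness of both operators with their agreement on a dense subspace. First I would note that $\mathcal{M}_t^{\a,\b}$ is bounded on $L^2(d\mu_{\a})$. This is immediate from its definition \eqref{Handef}: since $\mathcal{H}_{\a}$ is an isometry of $L^2(d\mu_{\a})$ with $\mathcal{H}_{\a}^{-1} = \mathcal{H}_{\a}$, the operator norm of $\mathcal{M}_t^{\a,\b}$ equals $\|m_{\a,\b}(t\cdot)\|_{\infty} = \|m_{\a,\b}\|_{\infty}$, and the latter is finite under the assumption $\a+\b\ge -1/2$ by the small-argument and large-argument asymptotics of $J_{\nu}$ quoted in the proof of Proposition \ref{prop:coinc}. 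On the other side, Proposition \ref{prop:L2} (stated just above) asserts that $L^2(d\mu_{\a}) \subset \domain M_t^{\a,\b}$ and $M_t^{\a,\b}$ is a bounded linear operator on $L^2(d\mu_{\a})$.

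Next, by Proposition \ref{prop:coinc} the two operators agree on $C_c^{\infty}(0,\infty)$. Since this class is dense in $L^2(\RR_+,d\mu_{\a})$, a standard approximation argument gives the desired coincidence. Explicitly, given $f \in L^2(d\mu_{\a})$, choose $f_n \in C_c^{\infty}(0,\infty)$ with $f_n \to f$ in $L^2(d\mu_{\a})$; then $\mathcal{M}_t^{\a,\b}f_n \to \mathcal{M}_t^{\a,\b}f$ and $M_t^{\a,\b}f_n \to M_t^{\a,\b}f$ in $L^2(d\mu_{\a})$ by boundedness, whereas $\mathcal{M}_t^{\a,\b}f_n = M_t^{\a,\b}f_n$ for every $n$. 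Passing to the limit yields $\mathcal{M}_t^{\a,\b}f = M_t^{\a,\b}f$ as elements of $L^2(d\mu_{\a})$.

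There is essentially no obstacle beyond what has already been packaged into the two preceding propositions; the argument is a routine density/continuity step. The only minor point deserving explicit mention is that both sides of the identity $\mathcal{M}_t^{\a,\b}f = M_t^{\a,\b}f$ are initially defined in different senses ($\mathcal{M}_t^{\a,\b}$ via Hankel multipliers, $M_t^{\a,\b}$ via an absolutely convergent integral on its natural domain), so one must use that $L^2(d\mu_{\a})$ is contained in $\domain M_t^{\a,\b}$ — this is precisely the first assertion of Proposition \ref{prop:L2} — to ensure that the identity makes sense pointwise almost everywhere, and not merely as equality of $L^2$ equivalence classes.
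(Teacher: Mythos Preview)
Your proof is correct and follows exactly the approach the paper indicates: the corollary is stated immediately after Proposition~\ref{prop:L2} with the remark that it follows from that proposition together with Proposition~\ref{prop:coinc}, and your density/continuity argument is precisely the intended one.
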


\begin{proof}[{Proof of Proposition \ref{prop:L2}}]
In view of the scaling property of the kernel \eqref{homo}, we may assume $t=1$.
Then the estimates of Theorem \ref{thm:kerest2} imply the following bound uniform in $x$ and $z$
$$
|K_1^{\a,\b}(x,z)| \lesssim
	\begin{cases}
		0, & 1 < |x-z|, \\
		(xz)^{-\a-1/2}[1-(x-z)^2]^{\gamma} \Big( \frac{(x+z)^2-1}{xz}\Big)^{\gamma}, & |x-z| < 1 < x+z, \\
		[1-(x-z)^2]^{-\a-1/2}[1-(x+z)^2]^{\gamma}, & x+z < 1,
	\end{cases}
$$
where $\gamma \in (-1,0)$ is a constant depending on $\a$ and $\b$; actually, taking
$\gamma = (\a+\b-1/2)\wedge (-\varepsilon)$ with a small $\varepsilon > 0$ will suffice ($\varepsilon$ to
take care of the logarithms, otherwise $\varepsilon = 0$ would be enough).
The right-hand side above can be simplified by taking into account the constraints on $x$ and $z$ and
the relations $4xz = (x+z)^2-(x-z)^2 \simeq (x+z) (x+z-|x-z|)$. We get
$$
|K_1^{\a,\b}(x,z)| \lesssim
	\begin{cases}
		0, & |x-z| > 1, \\
		(xz)^{-\a-1/2} \Big( \frac{(1-|x-z|)(x+z-1)}{x+z-|x-z|} \Big)^{\gamma}, & |x-z| < 1, \;\; x+z > 1, \\
		(1-|x-z|)^{-\a-1/2} [1-(x+z)]^{\gamma}, & x+z < 1.
	\end{cases}
$$
We will consider separately the two integral operators defined by the expressions on the right-hand side here.
It is enough to verify that each of them is bounded on $L^2(d\mu_{\a})$.

Let
\begin{align*}
L(x,z) & = \chi_{\{|x-z|<1,\; x+z > 1\}} (xz)^{-\a-1/2}
	\bigg( \frac{(1-|x-z|)(x+z-1)}{x+z-|x-z|} \bigg)^{\gamma}, \\
M(x,z) & = \chi_{\{x+z < 1\}}	(1-|x-z|)^{-\a-1/2} [1-(x+z)]^{\gamma},
\end{align*}
and denote the corresponding integral operators (integration with respect to $d\mu_{\a}$) by
$\mathbf{L}$ and $\mathbf{M}$, respectively. Our strategy to show $L^2(d\mu_{\a})$-boundedness of
$\mathbf{L}$ and $\mathbf{M}$ is mainly based on the Schur test, applied to integral operators
$\widetilde{\mathbf{L}}$ and $\widetilde{\mathbf{M}}$ defined by the kernels
$(xz)^{\a+1/2}L(x,z)$ and $(xz)^{\a+1/2}M(x,z)$, respectively, integration being with respect to
Lebesgue measure $dx$ in $(0,\infty)$. Observe that $\mathbf{L}$ is bounded in $L^2(d\mu_{\a})$ if and only if
$\widetilde{\mathbf{L}}$ is bounded in $L^2(dx)$; similarly for $\mathbf{M}$ and $\widetilde{\mathbf{M}}$.
Recall that, taking into account the positivity and symmetry of our kernels, the Schur test says that
the bound
$$
\int_0^{\infty} (xz)^{\a+1/2}L(x,z)\, dz \lesssim 1, \qquad x > 0,
$$
implies $L^p(dx)$-boundedness, $1 \le p \le \infty$, of $\widetilde{\mathbf{L}}$; analogous implication
holds for $\widetilde{\mathbf{M}}$.

To proceed, we first focus on $\mathbf{L}$, or rather $\widetilde{\mathbf{L}}$.
We have (recall that $-1 < \gamma < 0$)
\begin{align*}
(xz)^{\a+1/2}L(x,z) & = \chi_{\{|x-z|<1,\; x+z > 1\}} \bigg(
	\frac{(1-|x-z|)(x+z-1)}{(1-|x-z|) + (x+z-1)} \bigg)^{\gamma} \\
& \simeq \chi_{\{|x-z|<1,\; x+z > 1\}} \Big[ (1-|x-z|)^{\gamma} + (x+z-1)^{\gamma} \Big] \\
& \lesssim \chi_{\{|x-z|<1\}} (1-|x-z|)^{\gamma} + \chi_{\{|x-z|<1,\; x+z > 1\}} (x+z-1)^{\gamma}.
\end{align*}
In the last sum the first term is given by an integrable convolution kernel, so it defines
an operator bounded on all $L^p(dx)$, $1\le p \le \infty$ (clearly, the Schur test applies as well
with the same conclusion). To deal with the second term, we write
\begin{align*}
\int_0^{\infty} \chi_{\{|x-z|<1,\; x+z > 1\}} (x+z-1)^{\gamma}\, dz & = \int_{|x-1|}^{x+1}
	(x+z-1)^{\gamma}\, dz \\
& \simeq x^{\gamma+1} - [(x-1)\vee 0]^{\gamma+1} \lesssim 1, \qquad x > 0,
\end{align*}
and invoke the Schur test. The $L^p(dx)$-boundedness of $\widetilde{\mathbf{L}}$ follows, and this
implies $L^2(d\mu_{\a})$-boundedness of $\mathbf{L}$.

Next, we analyze $\mathbf{M}$. Here the Schur test gives the conclusion when applied to 
$\widetilde{\mathbf{M}}$, but that with $-1 < \a < -1/2$ excluded.
Therefore, to cover all $\a > -1$, we argue in a more subtle way. In the first
step we will show that the integral operator
$$
\mathbf{M}_1 f(x) = \int_0^{\infty} \chi_{\{|x-z| \le 1/2\}} M(x,z) f(z)\, d\mu_{\a}(z)
$$
is bounded on $L^2(d\mu_{\a})$. Indeed, this follows by using the Schur test, since
\begin{align*}
\int_0^{\infty} \chi_{\{|x-z|\le 1/2\}} M(x,z)\, d\mu_{\a}(z) & \simeq
	\int_0^{\infty} \chi_{\{|x-z| \le 1/2,\, x+z < 1\}} (1-x-z)^{\gamma}z^{2\a+1}\, dz \\
& \le \chi_{\{x < 3/4\}} (1-x)^{\gamma+2\a+2} \int_0^{1} (1-s)^{\gamma} s^{2\a+1}\, ds \\
& \lesssim 1, \qquad x > 0.
\end{align*}
It remains to verify that
$$
\mathbf{M}_2 f(x) = \int_0^{\infty} \chi_{\{x > z\}} \chi_{\{|x-z| > 1/2\}} M(x,z)f(z)\, d\mu_{\a}(z)
$$
is bounded in $L^2(d\mu_{\a})$, because then automatically the same is true for its dual
$\mathbf{M}_2^*$ and $\mathbf{M} = \mathbf{M}_1 + \mathbf{M}_2 + \mathbf{M}_2^*$.

Observe that the kernel of $\mathbf{M}_2$ can be estimated
$$
\chi_{\{x > z\}} \chi_{\{|x-z| > 1/2\}} M(x,z) \lesssim \chi_{\{x > z,\, 1/2 < x < 1,\, z < 1-x\}}
	(1-x)^{-\a-1/2} (1-x-z)^{\gamma},
$$
so it is enough to check the bound
\begin{align*}
& \bigg\| \chi_{\{1/2<x<1\}} (1-x)^{-\a-1/2} \int_0^{\infty} \chi_{\{z < 1-x\}}
	(1-x-z)^{\gamma}f(z)z^{2\a+1}\, dz \bigg\|_{L^2(\mathbb{R}_+,dx)} \\
& \qquad \lesssim \|x^{\a+1/2}f(x)\|_{L^2(\mathbb{R}_+,dx)}, \qquad f \in L^2(d\mu_{\a}).
\end{align*}
By changing the variable $1-x=y$ and letting $F(z) = z^{\a+1/2}f(z)$, we see that this task will be done
once we justify that
$$
\bigg\| \chi_{\{y<1\}}\int_0^{y} \Big(\frac{z}y\Big)^{\a+1/2} (y-z)^{\gamma} F(z)\, dz
 \bigg\|_{L^2(\mathbb{R}_+,dy)}	\lesssim \|F\|_{L^2(\mathbb{R}_+,dx)}, \qquad F \in L^2(\mathbb{R}_+,dx).
$$

Let $\mathcal{I}$ denote the left-hand side in the above estimate. Changing the variable $z=yr$ and then using
Minkowski's integral inequality we get
\begin{align*}
\mathcal{I} & = \bigg\| \chi_{\{y<1\}} y^{\gamma+1} \int_0^1 r^{\alpha+1/2} (1-r)^{\gamma} F(yr)\, dr
\bigg\|_{L^2(\mathbb{R}_+,dy)} \\
& \le \int_0^1 r^{\a+1/2} (1-r)^{\gamma} \big\| \chi_{\{y<1\}} y^{\gamma+1} F(yr) \big\|_{L^2(\mathbb{R}_+,dy)}\, dr.
\end{align*}
To estimate the norm expression under the last integral, we change back the variable $yr=z$ and obtain
$$
\big\| \chi_{\{y<1\}} y^{\gamma+1} F(yr) \big\|_{L^2(\mathbb{R}_+,dy)} = \frac{1}{\sqrt{r}} \bigg(
\int_0^{r} \Big(\frac{z}{r}\Big)^{2(\gamma+1)} |F(z)|^2\, dz \bigg)^{1/2}
\le \frac{1}{\sqrt{r}} \|F\|_{L^2(\mathbb{R}_+,dz)},
$$
since $2(\gamma+1) > 0$. Consequently,
$$
\mathcal{I} \le \|F\|_{L^2(\mathbb{R}_+,dz)} \int_0^1 r^{\a} (1-r)^{\gamma}\, dr \lesssim \|F\|_{L^2(\mathbb{R}_+,dz)}.
$$
This finishes proving $L^2(d\mu_{\a})$-boundedness of $\mathbf{M}_2$, thus also of $\mathbf{M}$.

The proof of Proposition \ref{prop:L2} is now complete.
\end{proof}

%%%%%%%%%%%%%%%%%%%%%%%%%%%%%%%%%%%%%%%%%%%%%%%%%%%%%%%%%%%%%%
\section{Time variable norm estimates of the kernel $K_t^{\a,\b}(x,z)$} \label{sec:test}
%%%%%%%%%%%%%%%%%%%%%%%%%%%%%%%%%%%%%%%%%%%%%%%%%%%%%%%%%%%%%%

In this section we find possibly sharp estimates (which in fact are sharp in some cases, and presumably
sharp in all the cases), of the
norm of $K_t^{\a,\b}(x,z)$ in $L^r(t^{\rho}dt)$. We consider here all power weights $t^{\rho}$,
$\rho \in \mathbb{R}$ and $1 \le r < \infty$. The case $r=\infty$ is not treated. It corresponds
to the maximal operator associated with radial spherical means, whose analysis requires more subtle
methods than those we apply for $r<\infty$; in particular, evaluating first the supremum in $t$ of the
kernel does not lead to satisfactory results.

\begin{thm} \label{thm:norm_est}
Assume that $\a > -1$ and $\a + \b > -1/2$. Let $1 \le r < \infty$ and assume further
\begin{equation} \label{conds}
 \a + \b > \frac{1}2 - \frac{1}{r} \qquad \textrm{and} \qquad
	\Big[ \frac{\rho+1}{r} < 2\a +2 \;\; \textrm{if} \;\; -\b \notin \mathbb{N} \Big].
\end{equation}
Then, uniformly in $x,z > 0$,
\begin{align*}
\big\| K_t^{\a,\b}(x,z) \big\|_{L^r(t^{\rho}dt)} & \lesssim
	(x+z)^{-2\a-1} 
 \left.
		\begin{cases}
			|x-z|^{(\rho+1)/r-1}, & \frac{\rho + 1}{r}< 1\\
			1 + \log^{1/r}\big(\frac{x+z}{|x-z|}\big), & \frac{\rho + 1}{r}= 1\\
			(x+z)^{(\rho+1)/r-1}, & \frac{\rho + 1}{r}> 1
		\end{cases} \right\} \\
& \qquad \times 
\left.
		\begin{cases}
			1, & \b + \frac{1}{r} > 1\\
			1 + \chi_{\{\b \neq 0\}}\log^{1/r}\big( \frac{x}z \vee \frac{z}x\big), & \b + \frac{1}r = 1  \\
			\big( \frac{x}z \wedge \frac{z}x\big)^{\b+ 1/r -1}, & \b + \frac{1}r < 1 
		\end{cases} \right\}.				
\end{align*}

Moreover, the relation $\lesssim$ can be replaced by $\simeq$ in the above estimate if either of the
following statements is true:
\begin{itemize}
\item[(a)]
$\a$ and $\b$ satisfy neither [$\a < -1/2$ and $\b < -2\a$] nor [$\a > 1/2$ and $\b < 0$],
\item[(b)]
$\a$ and $\b$ do not satisfy [$1 < \b < -2\a$ or $-\b \in \mathbb{N}$] and $x,z$ stay non-comparable,
\item[(c)]
$\a$ and $\b$ do not satisfy [$1 < \b < -2\a$ or $-\b \in \mathbb{N}$] and $(\rho+1)/r > 1$.
\end{itemize}

Furthermore, if either of the conditions in \eqref{conds} is not satisfied,
then the $L^r(t^{\rho}dt)$ norm of the kernel is infinite.
\end{thm}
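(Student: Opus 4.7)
The plan is to use the absolute pointwise bounds from Theorem \ref{thm:kerest2}, decompose $\|K_t^{\a,\b}(x,z)\|^r_{L^r(t^{\rho}dt)} = I_1 + I_2$, where $I_1 := \int_{|x-z|}^{x+z} |K_t^{\a,\b}(x,z)|^r t^{\rho}\, dt$ and $I_2 := \int_{x+z}^{\infty} |K_t^{\a,\b}(x,z)|^r t^{\rho}\, dt$, and reduce each piece to a one-dimensional beta-type integral through a suitable change of variables.

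In the middle region I would perform the substitution $t^2 = (x-z)^2 + 4xz\, s$, which sends $t \in (|x-z|,x+z)$ onto $s \in (0,1)$ and produces the identities $t^2-(x-z)^2 = 4xz s$, $(x+z)^2-t^2 = 4xz(1-s)$, $t\, dt = 2xz\, ds$. Inserting the estimates (2a)--(2c) converts $I_1$ into an explicit prefactor in $xz$ times $\int_0^1 s^{A}(1-s)^{B}[(x-z)^2+4xz s]^{C}\, ds$, where $A,B,C$ are linear combinations of $\a,\b,\rho,r$. Integrability at $s=0$ is equivalent to $\a+\b > 1/2-1/r$, and the three cases of $\a+\b$ versus $1/2$ in Theorem \ref{thm:kerest2} only determine whether $(1-s)^{B}$ is regular, logarithmic or of additional power type at $s=1$. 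Splitting this integral at $s \simeq (x-z)^2/(4xz)$ --- where the bracket transitions between $(x-z)^2$ and $4xz s$, equivalently $t$ transitions between $|x-z|$ and $\sqrt{4xz s}$ --- yields the trichotomy in $(\rho+1)/r$ versus $1$: the range $s \lesssim (x-z)^2/(4xz)$ contributes $|x-z|^{(\rho+1)/r-1}$ when $(\rho+1)/r<1$, the range $s \simeq 1$ contributes $(x+z)^{(\rho+1)/r-1}$ when $(\rho+1)/r>1$, with a logarithm at equality.

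For the far region I would use parts (3a)--(3d) together with the complementary substitution $t^2 = (x+z)^2 + 4xz\, v$, $v>0$, which gives $t^2-(x+z)^2 = 4xz v$, $t^2-(x-z)^2 = 4xz(v+1)$, $t\, dt = 2xz\, dv$. After inserting the bounds, $I_2$ is controlled by a prefactor in $xz,x+z$ times an integral whose integrand behaves like $(v+1)^{r(\b-1)}$ times a power of $t$; convergence at $v=\infty$ is exactly the condition $(\rho+1)/r<2\a+2$, coming from the leading decay $|K_t|\lesssim t^{-2\a-2}$, and $I_2 \equiv 0$ when $-\b\in\NN$ by (3a), which is why this condition may be dropped in that case. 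The transition between the regimes $t\simeq x+z$ and $t\gg x+z$ happens at $v\simeq (x+z)^2/(4xz) \simeq (x/z\wedge z/x)^{-1}$, and analyzing the integral near this point produces the trichotomy in $\b+1/r$ versus $1$: the factor $(x/z\wedge z/x)^{\b+1/r-1}$ for $\b+1/r<1$, a logarithm at equality, and $1$ for $\b+1/r>1$ (in the last case a power of the transition point coming from the integral exactly cancels the corresponding piece of the prefactor).

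The main technical obstacle is the bookkeeping of the logarithmic borderline cases ($\a+\b=1/2$, $(\rho+1)/r=1$, $\b+1/r=1$) together with the exceptional sets $E^{\mathsf{P}}$, $E^{\mathbf{Q}}$ from Section \ref{sec:estII}, so that the contributions of $I_1$ and $I_2$ can be glued without double-counting logarithms. For sharpness in parts (a)--(c), one relies on the fact that in those regimes Theorem \ref{thm:kerest2} furnishes a two-sided estimate with a fixed sign of the kernel, so that the same changes of variables produce matching lower bounds; cases (b) and (c) correspond to configurations in which the regions where only an upper bound is available are dominated by the sharply-estimated ones, so no sharpness is lost. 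Finally, the failure statement is proved by localizing the lower bound: if $\a+\b\le 1/2-1/r$ the pointwise lower bound for $|K_t|$ near $t=|x-z|$ already forces $I_1=+\infty$, and if $(\rho+1)/r\ge 2\a+2$ with $-\b\notin\NN$ the lower bound for large $t$ makes $I_2=+\infty$.
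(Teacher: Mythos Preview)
Your plan is correct and follows the same overall strategy as the paper: plug in the pointwise bounds from Theorem~\ref{thm:kerest2}, reduce to one-dimensional integrals by a change of variable in $t^2$, and read off the three trichotomies (in $\a+\b$, in $(\rho+1)/r$, in $\b+1/r$) from the resulting beta-type integrals. The differences are only in the bookkeeping. The paper splits the $t$-range into \emph{four} subintervals, with breakpoints at $\sqrt{x^2+z^2}$ and $\sqrt{2}(x+z)$ in addition to $x+z$, and uses a separate substitution on each piece (e.g.\ $t^2=(x-z)^2/(1-w)$ on the innermost one) so that every resulting integral is exactly of the form $\int_0^A w^{\gamma}(1-w)^{\delta}\,dw$ treated by an elementary lemma. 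Your two-region split with the linear substitutions $t^2=(x-z)^2+4xz\,s$ and $t^2=(x+z)^2+4xz\,v$ keeps the factor $t^{\rho-2(\a+\b)r-1}$ as an unresolved bracket, which you then handle by the further split at $s\simeq (x-z)^2/(4xz)$ and $v\simeq (x+z)^2/(4xz)$; note that the paper's extra breakpoint $\sqrt{2}(x+z)$ corresponds precisely to your transition $v=(x+z)^2/(4xz)$, so the two decompositions match up. Your substitutions are arguably cleaner (they make $t^2-(x-z)^2$ and $(x+z)^2-t^2$ simultaneously monomial), at the price of carrying the bracket; the paper's version isolates one singularity per piece, at the price of more substitutions. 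Either way the computation and the sharpness/failure arguments go through exactly as you describe.
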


Observe that there are some $\a,\b$
for which the unweighted norm ($\rho=0$) is infinite. This motivates introduction of power weights.

In the proof of Theorem \ref{thm:norm_est} we will use repeatedly the lemma below.
\begin{lema} \label{lem:A}
Let $\gamma > -1$, $\delta \in \mathbb{R}$ and $0 < C < 1$ be fixed.
\begin{itemize}
	\item[(a)] The following relation holds uniformly in $0 < A \le C$,
		$$
			\int_0^{A} w^{\gamma}(1-w)^{\delta}\, dw \simeq A^{\gamma + 1}.
		$$
	\item[(b)] The following bounds hold uniformly in $C \le A < 1$,
		$$
		 	\int_0^{A} w^{\gamma}(1-w)^{\delta}\, dw \simeq
		 		\begin{cases}
		 			1, & \quad \delta > -1, \\
		 			\log\frac{1}{1-A}, & \quad \delta = -1, \\
		 			(1-A)^{\delta+1}, & \quad \delta < -1.
		 		\end{cases}
		$$
\end{itemize}
For $\gamma \le -1$ the integral diverges to infinity for any $0 < A < 1$.
\end{lema}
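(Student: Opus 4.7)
The proof splits cleanly into the two regimes by exploiting that one of the two factors $w^{\gamma}$ or $(1-w)^{\delta}$ stays of order one away from its singular endpoint.

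For part (a) the interval of integration is $[0,A] \subset [0,C]$ with $C<1$ fixed, so $1-w \in [1-C,1]$ and hence $(1-w)^{\delta} \simeq 1$ uniformly in $w \in [0,A]$, with comparability constants depending only on $\delta$ and $C$. Therefore
\[
\int_0^A w^{\gamma}(1-w)^{\delta}\, dw \simeq \int_0^A w^{\gamma}\, dw = \frac{A^{\gamma+1}}{\gamma+1},
\]
which is finite precisely because $\gamma > -1$, and this gives exactly $A^{\gamma+1}$ up to constants.

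For part (b) I would write $\int_0^A = \int_0^C + \int_C^A$. The first piece is a positive finite constant, independent of $A$, since $\gamma > -1$ and $1-w \ge 1-C$ on $[0,C]$; thus $\int_0^C w^{\gamma}(1-w)^{\delta}\, dw \simeq 1$. For the second piece I would change variables $u = 1-w$ to obtain
\[
\int_C^A w^{\gamma}(1-w)^{\delta}\, dw = \int_{1-A}^{1-C} (1-u)^{\gamma} u^{\delta}\, du.
\]
On this range $u \in [1-A,1-C] \subset (0,1-C]$, so $1-u \ge C$ and $(1-u)^{\gamma} \simeq 1$ uniformly. The remaining integral $\int_{1-A}^{1-C} u^{\delta}\, du$ is evaluated explicitly and yields, uniformly in $C \le A < 1$, a quantity comparable to $1$ when $\delta > -1$ (the lower endpoint $(1-A)^{\delta+1}$ stays bounded and the upper endpoint is a fixed positive constant), to $\log\bigl((1-C)/(1-A)\bigr) \simeq \log\frac{1}{1-A}$ when $\delta=-1$, and to $(1-A)^{\delta+1}$ when $\delta < -1$ (the lower endpoint now dominates).

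Adding the contributions of the two pieces one absorbs the bounded constant from $\int_0^C$ into the respective expression: in the case $\delta>-1$ both pieces are $\simeq 1$; in the case $\delta=-1$ one uses that $\log\frac{1}{1-A} \ge \log\frac{1}{1-C} > 0$ for $A \ge C$, so the sum is $\simeq \log\frac{1}{1-A}$; in the case $\delta<-1$ one uses that $(1-A)^{\delta+1} \ge (1-C)^{\delta+1} > 0$ for $A \ge C$, so the sum is $\simeq (1-A)^{\delta+1}$. Finally, the divergence statement for $\gamma \le -1$ is immediate since $w^{\gamma}(1-w)^{\delta} \simeq w^{\gamma}$ as $w \to 0^+$ and $w^{\gamma}$ is not Lebesgue-integrable near the origin for such $\gamma$. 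There is no serious obstacle here; the only mild care needed is checking that the three asymptotic regimes in (b) match up properly at $A=C$ with the constant from $\int_0^C$, which is handled by the monotonicity remarks just given.
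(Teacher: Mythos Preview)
Your proof is correct, and the paper itself dismisses this lemma as a ``simple exercise'' without providing any details, so there is no specific approach to compare against; your argument is exactly the natural one. One minor imprecision: in part~(b) for $\delta>-1$, the second piece $\int_C^A$ is not itself $\simeq 1$ uniformly (it vanishes at $A=C$), only $\lesssim 1$; but since the first piece $\int_0^C$ is a fixed positive constant, the sum is still $\simeq 1$, so your conclusion stands.
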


\begin{proof}
Simple exercise.
\end{proof}

\begin{proof}[{Proof of Theorem \ref{thm:norm_est}}]
We will integrate against $t^{\rho}dt$ the right-hand sides of the bounds in
Theorem \ref{thm:kerest2} raised to power $r$.
We split this integration with respect to the four regions
$$
|x-z| < t < \sqrt{x^2+z^2} < t < x+z < t < \sqrt{2}(x+z) < t,
$$
and denote the resulting (non-negative) integrals by $I_1,I_2,I_3$ and $I_4$, respectively.
These integrals, in general, will be analyzed separately. Then the resulting estimates will be
merged via considering comparable and non-comparable values of $x$ and $z$.

We proceed by considering the most involved situation when items (2c) and (3d)
from Theorem \ref{thm:kerest2} are combined.
It is convenient to distinguish three main cases emerging naturally from the estimates in
Theorem \ref{thm:kerest2} (2c) and (3d).

\noindent \textbf{Case 1.} $\a + \b > 1/2$. We have
\begin{align} \nonumber
I_1 & = (xz)^{-(\a+1/2)r} \int_{|x-z|}^{\sqrt{x^2+z^2}} \big[ t^2 - (x-z)^2\big]^{(\a+\b-1/2)r}
	t^{\rho-2(\a+\b)r}\, dt \\
& = \frac{1}{2} (xz)^{-(\a+1/2)r} |x-z|^{\rho+1-r} \int_0^{\frac{2xz}{x^2+z^2}}
	w^{(\a+\b-1/2)r} (1-w)^{(r-\rho-3)/2}\, dw, \label{pp1}
\end{align}
where the second identity follows by the change of variable $t^2 = (x-z)^2/(1-w)$.
Next, observing that on the interval of integration in $I_2$ one has $t \simeq x+z$ and
$t^2 - (x-z)^2 \simeq xz$, and the length of that interval is
$x+z - \sqrt{x^2+z^2} \simeq xz/(x+z)$, we immediately get
$$
I_2 \simeq (x+z)^{-2(\a+\b)r + \rho-1} (xz)^{(\b-1)r+1}.
$$
Finally, changing the variable $t^2 = (x-z)^2/w$ we arrive at
\begin{align} \nonumber
I_3 + I_4 & = \int_{x+z}^{\infty} \big[t^2-(x-z)^2\big]^{(\b-1)r} t^{\rho-2(\a+\b)r}\, dt \\
& = \frac{1}2 |x-z|^{-2(\a+1)r + \rho +1} \int_0^{\big(\frac{|x-z|}{x+z}\big)^2}
	w^{(\a+1)r + (1-\rho)/2 -2} (1-w)^{(\b-1)r}\, dw. \label{pp2}
\end{align}

Consider now comparable $x$ and $z$. In this situation $xz \simeq (x+z)^2$. Further, since the upper
limit of integration in \eqref{pp1} is separated from $0$, by Lemma \ref{lem:A} (b) we conclude
\begin{equation} \label{I1sim}
I_1 \simeq (x+z)^{-(2\a+1)r} |x-z|^{\rho+1-r}
		\begin{cases}
			1, & \quad r > \rho +1, \\
			1+\log\frac{x+z}{|x-z|}, & \quad r = \rho +1,\\
			\big(\frac{x+z}{|x-z|}\big)^{\rho+1-r}, & \quad r < \rho+1.
		\end{cases}
\end{equation}
Since now
$$
I_2 \simeq (x+z)^{-(2\a+1)r + \rho +1 - r},
$$
it is straightforward to see that $I_2$ is controlled by $I_1$. 
As for $I_3+I_4$, we observe that the upper limit of integration in \eqref{pp2} is separated from $1$ and
apply Lemma \ref{lem:A} (a) to get
$$
I_3+I_4 \simeq (x+z)^{-(2\a+1)r + \rho +1 - r}.
$$
Here the right-hand side is the same as in case of $I_2$.
Summing up, $I_1$ dominates $I_2+I_3+I_4$ and the desired bound for $x \simeq z$ follows. 

Let now $x$ and $z$ be non-comparable. For symmetry reasons, we may assume that $x \gg z$.
Then $x+z \simeq |x-z| \simeq x$. Taking into account that the upper limit of integration in \eqref{pp1}
is separated from $1$ and applying Lemma \ref{lem:A} (a) we obtain
\begin{equation} \label{I1nonsim}
I_1 \simeq x^{-(2\a+1)r} x^{\rho+1-r} \Big(\frac{z}x\Big)^{(\b-1)r+1}.
\end{equation}
Moreover, it is straightforward to see that $I_2$ is comparable to the right-hand side here. 
Passing to $I_3+I_4$, we note that the upper limit of integration in \eqref{pp2} is now separated from
$0$, so Lemma \ref{lem:A} (b) can be applied. This leads to
$$
I_3 + I_4 \simeq x^{-(2\a+1)r} x^{\rho+1-r}
		\begin{cases}
			1, & \quad \frac{1}r > 1-\b, \\
			1+\log\frac{x}z, & \quad \frac{1}r = 1-\b,\\
			\big(\frac{z}x\big)^{(\b-1)r+1}, & \quad \frac{1}r < 1-\b.
		\end{cases}
$$
Since, as easily verified, $I_3+I_4$ controls $I_1+I_2$, we get the bound asserted in the theorem for
$x \gg z$. The conclusion follows.

\noindent \textbf{Case 2.} $\a + \b < 1/2$. Since on the interval of integration in $I_1$ we have
$(x+z)^2-t^2 \simeq xz$, we get
$$
I_1 \simeq (xz)^{-(\a+1/2)r} \int_{|x-z|}^{\sqrt{x^2+z^2}} \big[ t^2 - (x-z)^2\big]^{(\a+\b-1/2)r}
	t^{\rho-2(\a+\b)r}\, dt.
$$
The right-hand side here corresponds to $I_1$ from Case 1, so all the bounds for the present $I_1$ will
be as in Case 1. Considering $I_2$, we observe that on the interval of integration
$t^2-(x-z)^2 \simeq xz$ and $t \simeq x+z$. Consequently,
\begin{align} \nonumber
I_2 & \simeq (xz)^{-(\a+1/2)r}(x+z)^{\rho-2(\a+\b)r-1} \int_{\sqrt{x^2+z^2}}^{x+z} 
	\big[(x+z)^2-t^2\big]^{(\a+\b-1/2)r} t\, dt \\
& = \frac{1}{2} (xz)^{-(\a+1/2)r} (x+z)^{\rho+1-r} \int_0^{\frac{2xz}{(x+z)^2}}	w^{(\a+\b-1/2)r}\, dw,
\label{pp3}
\end{align}
where the last identity follows by the change of variable $t^2 = (x+z)^2(1-w)$.
For $I_3$ we notice that on the interval of integration $t \simeq x+z$ and write
\begin{align} \nonumber
I_3 & \simeq (x+z)^{-2(\a+\b)r+\rho-1} \int_{x+z}^{\sqrt{2}(x+z)} \big[t^2-(x-z)^2\big]^{-(\a+1/2)r}
	\big[ t^2-(x+z)^2\big]^{(\a+\b-1/2)r} t\, dt \\
& \simeq (x+z)^{-(2\a+1)r+\rho+1-r} \bigg( \frac{xz}{(x+z)^2}\bigg)^{(\b-1)r+1}
	\int_0^{\frac{(x+z)^2}{(x+z)^2+4xz}} w^{(\a+\b-1/2)r}(1-w)^{(1-\b)r-2}\, dw,	\label{pp4}
\end{align}
where the last relation is obtained by changing the variable 
\begin{equation} \label{chv3}
t^2 = (x+z)^2+4xz \frac{w}{1-w}.
\end{equation}
Finally, on the interval of integration of $I_4$ one has $t^2-(x-z)^2 \simeq t^2-(x+z)^2 \simeq t^2$ and
therefore
$$
I_4 \simeq \int_{\sqrt{2}(x+z)}^{\infty} t^{-(2\a+1)r + \rho -r} \, dt \simeq (x+z)^{-(2\a+1)r+\rho+1-r}.
$$

Let now $x$ and $z$ be comparable. Then $I_1$ satisfies \eqref{I1sim}. Further, the upper limit of
integration in \eqref{pp3} is separated from $0$, hence that integral has the size of a (positive and finite)
constant. Therefore, using also $xz \simeq (x+z)^2$,
$$
I_2 \simeq (x+z)^{-(2\a+1)r + \rho+1-r}
$$
which, as in Case 1, is controlled by $I_1$. Clearly, the latter is also true for $I_4$.
As for $I_3$, the upper limit of integration is separated both from $0$ and $1$, so we get
the same behavior as for $I_2$ and $I_4$. Altogether, this gives the relevant bound for $x \simeq z$.

When $x$ and $z$ are non-comparable, say $x \gg z$, we argue in a similar way. $I_1$ satisfies
\eqref{I1nonsim}. To estimate $I_2$ we just integrate in \eqref{pp3} getting the same behavior as for $I_1$,
so
$$
I_1+I_2 \simeq x^{-(2\a+1)r} x^{\rho+1-r} \Big(\frac{z}{x}\Big)^{(\b-1)r+1}.
$$
The upper limit of integration in \eqref{pp4} is separated from $0$, so applying Lemma \ref{lem:A} (b) we
arrive at
$$
I_3 \simeq x^{-(2\a+1)r} x^{\rho+1-r}
		\begin{cases}
			1, & \quad \frac{1}r > 1-\b, \\
			1+\log\frac{x}z, & \quad \frac{1}r = 1-\b,\\
			\big(\frac{z}x\big)^{(\b-1)r+1}, & \quad \frac{1}r < 1-\b.
		\end{cases}
$$
Since $I_4 \simeq x^{-(2\a+1)r} x^{\rho+1-r}$, we see that $I_4 \lesssim I_3$. Moreover, as easily
verified, $I_1+I_2 \lesssim I_3$. Thus the estimate of the theorem follows for $x \gg z$, and by
symmetry also for $x \ll z$.

\noindent \textbf{Case 3.} $\a+\b = 1/2$. To treat $I_1$, we notice that on the interval of integration
$(x+z)^2-t^2 \simeq xz$, hence the logarithm can be neglected and the estimates for $I_1$ are as in
Cases 1 and 2.
Dealing with $I_2$, we take into account that $t \simeq x+z$ on the interval of integration and get
\begin{align*}
I_2 & \simeq (xz)^{-(\a+1/2)r} (x+z)^{\rho-r-1} \int_{\sqrt{x^2+z^2}}^{x+z} 
	\log^r\bigg(\frac{8xz}{(x+z)^2-t^2}\bigg) t\, dt \\
& \simeq (xz)^{-(\a+1/2)r+1} (x+z)^{\rho-r-1} \int_0^{1/4} \log^r \frac{1}w \, dw.
\end{align*}
Here the second relation is obtained by changing the variable $[(x+z)^2-t^2]/(8xz) = w$, and the
last integral is a (positive and finite) constant depending only on $r$.
In $I_3$ we still have $t \simeq x+z$, thus
\begin{align*}
I_3 & \simeq (x+z)^{\rho-r-1} \int_{x+z}^{\sqrt{2}(x+z)} \big[t^2-(x-z)^2\big]^{(\b-1)r}
	\log^r\bigg( 2\frac{t^2-(x-z)^2}{t^2-(x+z)^2}\bigg) t\, dt \\
& = 2 (xz)^{-(\a+1/2)r+1}(x+z)^{\rho-r-1} \int_0^{\frac{(x+z)^2}{(x+z)^2+4xz}}
	(1-w)^{(1-\b)r-2} \log^r\frac{2}{w} \, dw,
\end{align*}
where the last identity follows by the change of variable \eqref{chv3} and the equality $\b-1=-(\a+1/2)$.
Observe that in the last integral
the logarithmic factor can be neglected for our purpose because it is integrable near $0$, and the upper limit
of integration is always separated from $0$; thus Lemma \ref{lem:A} (b) is applicable. Finally, in $I_4$,
$t^2-(x-z)^2 \simeq t^2-(x+z)^2 \simeq t^2$, so its behavior is the same as in Case 2.

From here we proceed similarly as in Cases 1 and 2 to see that $I_1$ is the dominating integral
when $x \simeq z$, whereas for non-comparable $x$ and $z$ the dominating one is $I_3$.
Combining then the behaviors of $I_1$ and $I_3$ we conclude the desired estimate in Case 3.

Proving the bound of Theorem \ref{thm:norm_est} is finished in the most involved situation when
estimates of items (2c) and (3d) in Theorem \ref{thm:kerest2} are combined.
Other combinations of items (2a)--(2c) and (3a)--(3d)
are implicitly contained in the analysis done so far, since the other bounds coincide with subcases occurring
in (2c) and (3d). Further details are straightforward and thus omitted.

Tracing this proof reveals that the conditions \eqref{conds} are indeed necessary to assure integrability
in various places. If either of them would not be satisfied, then we would have
$I_1+I_2+I_3+I_4 = \infty$. Moreover, since the bounds of Theorem \ref{thm:kerest2} are sharp
when $t \to |x-z|$ or $t \to x+z$ or $t \to \infty$, we infer that the $L^r(t^{\rho}dt)$ norm
of the kernel is infinite if \eqref{conds} does not hold.

Finally, conditions (a)--(c) allowing to replace $\lesssim$ by $\simeq$ are deduced from the corresponding
comments in Theorem \ref{thm:kerest2} and mutual relations between the integrals $I_1,\ldots,I_4$.
More precisely, for showing (b) and (c) the following fact is relevant:
assuming $-\b \notin \mathbb{N}$,  the sum $I_1+\ldots +I_4$ is controlled
by $I_3+I_4$ when $x$ and $z$ stay non-comparable or $r < \rho+1$.
\end{proof}

%%%%%%%%%%%%%%%%%%%%%%%%%%%%%%%%%%%%%%%%%%%%%%%%%%%%%%%%%%%%%%
\section{Mixed norm estimates for $M_t^{\a,\b}$} \label{sec:mixed}
%%%%%%%%%%%%%%%%%%%%%%%%%%%%%%%%%%%%%%%%%%%%%%%%%%%%%%%%%%%%%%

In this section our aim is to study boundedness of $M_t^{\a,\b}$ from $L^p(d\mu_{\a})$
to the mixed norm space $L^q(L^r_{t^{\rho}})(d\mu_{\a})$. More generally, we are interested
in two-weight mixed norm estimates of the form
\begin{equation} \label{mixnorm}
\Big\| \big\| M_t^{\a,\b}f(x)\big\|_{L^r(t^{\rho}dt)} x^{-B} \Big\|_{L^q(d\mu_{\a})} \lesssim
\big\| f(x) x^{A} \big\|_{L^p(d\mu_{\a})},
\end{equation}
which are uniform in $f$. Our objective is to find possibly wide ranges of the parameters
$\a,\b,A,B,r,\rho,p,q$ for which \eqref{mixnorm} holds. Here, in general, we consider
\begin{equation} \label{ranges}
\a > -1, \quad \b > -\a-1/2, \quad 1 \le p,q \le \infty, \quad 1 \le r < \infty,
\quad A,B,\rho \in \mathbb{R}.
\end{equation}

The result below is a simple consequence of homogeneity of the kernel $K_t^{\a,\b}(x,z)$, see
\eqref{homo}.
\begin{propo} \label{prop:nec}
Assume that the parameters satisfy \eqref{ranges}. Then the condition
\begin{equation} \label{nec}
\frac{1}{q} = \frac{1}p +\frac{1}{2\a+2}\Big( A+B-\frac{\rho+1}r\Big)
\end{equation}
is necessary for \eqref{mixnorm} to hold uniformly in, say, $f \in C_c^{\infty}(0,\infty)$.
\end{propo}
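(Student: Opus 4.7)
The plan is a standard scaling argument exploiting the homogeneity relation \eqref{homo}. Fix any nonzero $f\in C_c^{\infty}(0,\infty)$, and for $s>0$ consider the dilate $f_s(x):=f(x/s)$. The first step is to transfer the scaling from $f$ to the operator: substituting and changing variable $z=sw$ in the integral defining $M_t^{\a,\b}f_s(x)$ produces a factor $s^{2\a+2}$ from $d\mu_{\a}$, which is exactly cancelled by the factor $s^{-(2\a+2)}$ supplied by \eqref{homo} when $K_t^{\a,\b}(x,sw)$ is rewritten as $s^{-(2\a+2)}K_{t/s}^{\a,\b}(x/s,w)$. The outcome is the clean identity
\[
M_t^{\a,\b}f_s(x)=M_{t/s}^{\a,\b}f(x/s), \qquad t,x,s>0.
\]

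Next I would compute how each side of \eqref{mixnorm} scales in $s$ when $f$ is replaced by $f_s$. On the left, the change of variable $t=s\tau$ in the $L^r(t^{\rho}dt)$-norm yields a factor $s^{(\rho+1)/r}$; then the change of variable $x=sy$ in the outer $L^q(d\mu_{\a})$-norm, combined with the weight $x^{-B}$, yields a further factor $s^{-B+(2\a+2)/q}$. Thus the left-hand side of \eqref{mixnorm} evaluated at $f_s$ equals $s^{(\rho+1)/r-B+(2\a+2)/q}$ times the same quantity evaluated at $f$. In parallel, the change of variable $x=sy$ in the right-hand side, together with the weight $x^{A}$, produces a factor $s^{A+(2\a+2)/p}$.

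Assuming \eqref{mixnorm} holds uniformly in $f$, applying it to $f_s$ and dividing by the corresponding expression for $f$ (which is finite and nonzero by our choice of $f$) forces
\[
s^{(\rho+1)/r-B+(2\a+2)/q}\lesssim s^{A+(2\a+2)/p}, \qquad s>0.
\]
Letting $s\to 0^+$ and $s\to\infty$ then compels equality of the exponents, which after rearrangement is exactly \eqref{nec}. The only step that requires care is verifying that the rewriting $M_t^{\a,\b}f_s(x)=M_{t/s}^{\a,\b}f(x/s)$ is legitimate, i.e.\ that all integrals converge absolutely for $f\in C_c^{\infty}(0,\infty)$; but this is already guaranteed by $C_c^{\infty}(0,\infty)\subset \domain M_t^{\a,\b}$ established in Proposition~\ref{prop:coinc}, so no obstacle arises.
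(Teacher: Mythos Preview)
Your argument is correct and is precisely the scaling argument via the homogeneity \eqref{homo} that the paper alludes to; the paper does not spell out a proof beyond noting this. The only point one might add is that the left-hand side evaluated at a nonzero $f\in C_c^\infty(0,\infty)$ is indeed nonzero (since $\mathcal{M}_t^{\a,\b}f=\mathcal{H}_\a(m_{\a,\b}(t\cdot)\mathcal{H}_\a f)$ vanishes identically only when $f=0$), which justifies the division step.
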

Notice that condition \eqref{nec} is independent of $\b$.

To proceed, we shall consider a positive kernel $K^{\a,\b}_{r,\rho}(x,z)$ defined by the right-hand side of
the bound from Theorem \ref{thm:norm_est}, without assuming \eqref{conds}, and will analyze the corresponding
positive operator $K^{\a,\b}_{r,\rho}$.

%%%
\subsection{Analysis of the auxiliary operator $K^{\a,\b}_{r,\rho}$}
Let
\begin{align*}
& K_{r,\rho}^{\a,\b}(x,z) = \\ &(x+z)^{-2\a-1} 
 \left.
		\begin{cases}
			|x-z|^{(\rho+1)/r-1}, & \frac{\rho + 1}{r}< 1\\
			1 + \log^{1/r}\big(\frac{x+z}{|x-z|}\big), & \frac{\rho + 1}{r}= 1\\
			(x+z)^{(\rho+1)/r-1}, & \frac{\rho + 1}{r}> 1
		\end{cases} \right\}
\left.
		\begin{cases}
			1, & \b + \frac{1}{r} > 1\\
			1 + \chi_{\{\b \neq 0\}}\log^{1/r}\big( \frac{x}z \vee \frac{z}x\big), & \b + \frac{1}r = 1  \\
			\big( \frac{x}z \wedge \frac{z}x\big)^{\b+ 1/r -1}, & \b + \frac{1}r < 1 
		\end{cases} \right\},				
\end{align*}
and consider the associated integral operator
$$
K_{r,\rho}^{\a,\b}f(x) = \int_0^{\infty} K_{r,\rho}^{\a,\b}(x,z)f(z)\, d\mu_{\a}(z).
$$
We denote by $\domain K_{r,\rho}^{\a,\b}$ the natural domain of this operator, that is the set of all those
functions $f$ for which the defining integral converges for a.a.\ $x$.
Observe that a necessary condition for $\domain K_{r,\rho}^{\a,\b}$ to be non-trivial is $\rho > -1$,
since otherwise for each $x > 0$ the factor $|x-z|^{(\rho+1)/r-1}$ is not locally integrable around $x$.

We will show the following sharp result.
\begin{thm} \label{thm:K}
Assume that $\a,\b,A,B,r,\rho,p,q$ are as in \eqref{ranges}.
\begin{itemize}
\item[(i)]
The inclusion $L^p(x^{Ap}d\mu_{\a}) \subset \domain K^{\a,\b}_{r,\rho}$ holds if and only if $\rho > -1$ and
\begin{align} \label{Cincl}
& \frac{\rho+1}r - \frac{2\a+2}p - \Big(\b + \frac{1}r-1\Big)\wedge 0 < A 
	< \frac{2\a+2}{p'} + \Big(\b + \frac{1}r-1\Big)\wedge 0 \\
& \qquad \Big(\textrm{both}\; \le \; \textrm{when}\; p=1 \; \textrm{and}\; 
	\Big[ \b=0 \; \textrm{or}\; \b+\frac{1}r-1 \neq 0\Big] \Big).
	\nonumber
\end{align}
\item[(ii)]
The estimate
$$
\big\| x^{-B} K^{\a,\b}_{r,\rho}f \big\|_{L^q(d\mu_{\a})} \lesssim \big\| x^A f\big\|_{L^p(d\mu_{\a})}
$$
holds uniformly in $f \in L^p(x^{Ap}d\mu_{\a})$ if and only if the following conditions are satisfied:
\begin{itemize}
\item[(C1)] $p \le q$,
\item[(C2)] $\frac{1}q = \frac{1}p +\frac{1}{2\a+2}\big(A+B-\frac{\rho+1}r\big)$,
\item[(C3)] $\big( A - \frac{2\a+2}{p'} \big) \vee \big(B - \frac{2\a+2}{q} \big) < 
	\big(\b + \frac{1}r-1\big) \wedge 0$ \\
		\quad ($\le$ when $p=q'=1$ and [$\b=0$ or $\b+\frac{1}r\neq 1$]),
\item[(C4)] $\frac{1}q \ge \frac{1}p - \frac{\rho+1}r$ \quad ($>$ when $p=1$ or $q=\infty$).
\end{itemize}
\end{itemize}
\end{thm}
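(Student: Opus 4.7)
The plan is to handle (i) and (ii) separately, splitting (ii) into sufficiency and necessity. Starting with (i): since $K^{\a,\b}_{r,\rho}(x,z)$ is non-negative, the inclusion $L^p(x^{Ap}d\mu_\a)\subset\domain K^{\a,\b}_{r,\rho}$ is equivalent, by $L^p$--$L^{p'}$ duality in the $z$ variable, to having $K^{\a,\b}_{r,\rho}(x,\cdot)z^{-A}\in L^{p'}(d\mu_\a)$ for almost every $x>0$ (with the essential-supremum interpretation when $p=1$). The kernel has at most three critical loci in $z$: $z=0$, $z=x$ (only when $(\rho+1)/r<1$), and $z\to\infty$. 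Reading off the local exponents from the explicit form of $K^{\a,\b}_{r,\rho}$ in each of the three pieces of its definition, integrability at $z\to0$ produces the upper bound on $A$ in \eqref{Cincl}, integrability at $z\to\infty$ produces the lower bound, and local integrability at $z=x$ is equivalent to $\rho>-1$. The strict-versus-non-strict endpoint issues at $p=1$ reflect the switch from $L^{p'}$ to $L^\infty$ and from integrability to essential boundedness.

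For the sufficiency part of (ii), the strategy is to split the integration in $K^{\a,\b}_{r,\rho}f(x)$ according to $\{z<x/2\}\cup\{x/2\le z\le 2x\}\cup\{z>2x\}$. On the two off-diagonal pieces we have $x+z\simeq|x-z|\simeq x\vee z$, so the logarithmic factor $\log(\frac{x+z}{|x-z|})$ is bounded and the kernel reduces to a pure product of powers of $x\vee z$ and $(x\wedge z)/(x\vee z)$. The resulting operators are of one-sided Hardy type on $(\mathbb{R}_+,d\mu_\a)$; matching the sharp conditions in the Bradley--Muckenhoupt theorem for the weighted Hardy inequality (one inequality from the tail at $z\to0$, one from the tail at $z\to\infty$) yields exactly $p\le q$ together with the two upper bounds in (C3). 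The on-diagonal piece $\{x/2\le z\le 2x\}$ is handled via the change of variables $z=xs$, which turns it into a dilation-invariant operator on $(\mathbb{R}_+,dx/x)$ whose kernel in $s$ has a power singularity of order $(\rho+1)/r-1$ at $s=1$; by Young/Minkowski its $L^p(x^{Ap}d\mu_\a)\to L^q(x^{-Bq}d\mu_\a)$ boundedness is equivalent, under (C2), to condition (C4). Combining the three pieces via the triangle inequality gives the full estimate.

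For necessity, (C2) is precisely Proposition \ref{prop:nec}. The remaining three conditions are extracted from carefully chosen test functions: (C1) by plugging in a sum of well-separated equal-mass bumps and exploiting the $\ell^q$ versus $\ell^p$ discrepancy of the nearly disjoint outputs (this uses the scaling \eqref{homo} to slide and rescale the bumps freely); (C3) by testing with power functions $f(z)=z^{-\tau}\chi_{\{z<\delta\}}$ and $z^{-\tau}\chi_{\{z>1/\delta\}}$ with $\tau$ approaching the critical value, so that the two Hardy tails dictate the two upper bounds; and (C4) by testing with a bump concentrated on $\{|z-x_0|<\delta\}$ for a fixed $x_0>0$, which activates the local $|x-z|^{(\rho+1)/r-1}$ singularity and forces $1/q\ge 1/p-(\rho+1)/r$. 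The main technical obstacle is the endpoint analysis: in the logarithmic regimes $\b+1/r=1$ and $(\rho+1)/r=1$, and at the corners $p=1$, $q=\infty$, strict inequalities coalesce into non-strict ones (or vice versa) and the test functions must be refined by an extra logarithmic factor, with cross-checking against the nine combinations of cases in the definition of $K^{\a,\b}_{r,\rho}(x,z)$; this bookkeeping is where most of the work lies.
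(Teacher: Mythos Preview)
Your proposal is correct and follows essentially the same route as the paper: both decompose $K^{\a,\b}_{r,\rho}$ into off-diagonal Hardy-type pieces plus a diagonal piece carrying the local $|x-z|^{(\rho+1)/r-1}$ (or log) singularity, and both invoke the Bradley--Muckenhoupt characterization for the Hardy pieces. The only notable methodological difference is in the necessity argument: the paper gets (C1), (C3), (C4) ``for free'' by quoting the sharp if-and-only-if conditions in the Hardy lemma and in the analysis of the diagonal operators $T,S$ (imported from \cite{NoSt}), whereas you propose direct test-function constructions; your route is slightly more hands-on but equivalent, and has the advantage of being self-contained.
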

Note that in general none of conditions (C1)--(C4) follows from the others. However, (C4) is
superfluous when $\frac{\rho+1}r > 1$, and in case $\frac{\rho+1}r=1$ it is equivalent to
$(p,q) \neq (1,\infty)$. Moreover, in view of (C2), condition (C4) can be replaced by
\begin{itemize}
\item[(C4')]
$A+B \ge (2\a+1)\Big(\frac{1}q-\frac{1}p\Big)$ \quad ($>$ when $p=1$ or $q=\infty$).
\end{itemize}
Finally, notice that (C2) is exactly \eqref{nec} from Proposition \ref{prop:nec}.

In order to prove Theorem \ref{thm:K} we now define auxiliary positive operators into which
$K_{r,\rho}^{\a,\b}$ will be `decomposed'.
Observe that
\begin{align*}
K_{r,\rho}^{\a,\b}(x,z) & \simeq (x+z)^{-2\a-1} (x+z)^{(\rho+1)/r-1}
	\left.
		\begin{cases}
			1, & \b + \frac{1}{r} > 1\\
			1 + \chi_{\{\b \neq 0\}}\log^{1/r}\big( \frac{x}z \vee \frac{z}x\big), & \b + \frac{1}r = 1  \\
			\big( \frac{x}z \wedge \frac{z}x\big)^{\b+ 1/r -1}, & \b + \frac{1}r < 1 
		\end{cases} \right\} \\
& \qquad + \chi_{\{x/2 < z < 2x\}} z^{-2\a-1}\bigg[ \chi_{\{\frac{\rho+1}r< 1\}}	|x-z|^{(\rho+1)/r-1}
	+ \chi_{\{\frac{\rho+1}r = 1\}}	\log^{1/r}\frac{x+z}{|x-z|} \bigg],
\end{align*}
uniformly in $x,z>0$. Accordingly, for $\eta \in \mathbb{R}$, define
\begin{align*}
H_0^{\eta}f(x) & = x^{-2\a-2 + (\rho+1)/r - \eta} \int_0^x z^{2\a+1+\eta}f(z)\, dz, \\
H_{\infty}^{\eta}f(x) & = x^{\eta} \int_x^{\infty} z^{(\rho+1)/r-1-\eta}f(z)\, dz, \\
H_0^{\textrm{log}}f(x) & = x^{-2\a-2 + (\rho+1)/r} 
	\int_0^{x} \log^{1/r}\Big(\frac{2x}z\Big) z^{2\a+1}f(z)\, dz,\\
H_{\infty}^{\textrm{log}}f(x) & = \int_x^{\infty} \log^{1/r}\Big(\frac{2z}x\Big) z^{(\rho+1)/r-1}f(z)\, dz,\\
Tf(x) & = \int_{x/2}^{2x} |x-z|^{(\rho+1)/r-1}f(z)\, dz,\\
Sf(x) & = \int_{x/2}^{2x} \log^{1/r}\Big( \frac{x+z}{|x-z|}\Big) f(z)\, dz.
\end{align*}
Then the following relation is uniform both pointwise and in $f \ge 0$:
\begin{align*}
K_{r,\rho}^{\a,\b}f & \simeq H_0^{\b+1/r-1}f + H_{\infty}^{\b+1/r-1}f + 
	\chi_{\{\b+1/r>1\}}(H_0^0f + H_{\infty}^0f) \\
& \qquad + \chi_{\{\b+1/r = 1, \b \neq 0\}}
		(H_0^{\textrm{log}}f + H_{\infty}^{\textrm{log}}f) + \chi_{\{\frac{\rho+1}r < 1\}} Tf
			+ \chi_{\{\frac{\rho+1}r =1\}} Sf.
\end{align*}

It is clear that $K_{r,\rho}^{\a,\b}$ is bounded from $L^p(x^{Ap}d\mu_{\a})$ to $L^q(x^{-Bq}d\mu_{\a})$
(or well defined on $L^p(d\mu_{\a})$) if and only if all of the component operators appearing
on the right-hand side above have the property. Therefore we now analyze each of these operators.
We will argue similarly as in \cite[Section 4.1]{NoSt}. Our main tool will be the following characterization
of two power-weight $L^p-L^q$ inequalities for the Hardy operator and its dual; see e.g.\ \cite{B,SS} and
\cite[Lemma 4.1]{NoSt}.
\begin{lema} \label{lem:Har}
Let $a,b \in \mathbb{R}$ and let $1 \le p,q \le \infty$.
\begin{itemize}
\item[(a)]
The estimate
$$
\bigg\| x^b \int_0^x g(y)\, dy \bigg\|_{L^q(\mathbb{R}_+,dx)} 
	\lesssim \big\|x^a g\big\|_{L^p(\mathbb{R}_+,dx)}
$$
holds uniformly in 
$g \in L^p(\mathbb{R}_+,x^{ap}dx)$ if and only if $p \le q$ and
$a - \frac{1}{p'} = b+\frac{1}q$ and $a< \frac{1}{p'}$ ($\le$ in case $p=q'=1$). 
\item[(b)]
The estimate
$$
\bigg\| x^b \int_x^{\infty} g(y)\, dy \bigg\|_{L^q(\mathbb{R}_+,dx)} 
	\lesssim \big\|x^a g\big\|_{L^p(\mathbb{R}_+,dx)}
$$
holds uniformly in $g \in L^p(\mathbb{R}_+, x^{ap}dx)$ if and only if $p \le q$ and
$a - \frac{1}{p'} = b+\frac{1}q$ and $b>-\frac{1}q$ ($\ge$ in case $p=q'=1$). 
\end{itemize}
\end{lema}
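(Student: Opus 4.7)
\textbf{Plan for Lemma \ref{lem:Har}.} My plan is to conjugate the operators into genuine convolutions on $\RR$ via the multiplicative substitution $x = e^s$, and then invoke Young's inequality. Part (b) follows from (a) by the substitution $x \mapsto 1/x$ (which swaps $\int_0^x$ with $\int_x^\infty$ and exchanges the roles of the weight exponents), so I focus on (a).

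\textbf{Sufficiency via convolution.} Setting $\psi(s) = e^{(a+1/p)s} g(e^s)$ one gets $\|\psi\|_{L^p(ds)} = \|x^a g\|_{L^p(dx)}$. A change of variable $y = e^t$, $u = s-t$, in the inner integral yields
\[
\int_0^{e^s} g(y)\, dy = e^{(1-a-1/p)s} (\psi * K)(s), \qquad K(u) = e^{(a-1/p')u}\chi_{(0,\infty)}(u),
\]
and after multiplying by $e^{bs}$ and using the scaling identity $a - 1/p' = b + 1/q$ (which makes the remaining exponential factor exactly $e^{-s/q}$) one obtains
\[
\Big\|x^b\int_0^x g(y)\, dy \Big\|_{L^q(dx)} = \|\psi * K\|_{L^q(ds)}.
\]
Young's inequality $\|\psi * K\|_q \le \|K\|_\sigma \|\psi\|_p$ with $1 + 1/q = 1/p + 1/\sigma$ is available precisely when $p \le q$ (so $\sigma \in [1, \infty]$). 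Moreover $K \in L^\sigma(\RR)$ iff $a < 1/p'$ when $\sigma < \infty$, whereas $\|K\|_\infty \le 1$ for any $a \le 1/p'$, which covers the endpoint $p = q' = 1$ (where $\sigma = \infty$) and explains the relaxed inequality there.

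\textbf{Necessity.} The scaling identity $a - 1/p' = b + 1/q$ is forced by dilation invariance: applying the inequality to $g_\lambda(y) = g(y/\lambda)$ and comparing the powers of $\lambda$ on both sides leaves no choice. The condition $p \le q$ is forced by testing on a lacunary sum $g = \sum_k c_k g_k$, where $g_k$ is a fixed bump rescaled onto widely separated dyadic annuli; both sides then decouple into $\ell^p(\{c_k\})$ and $\ell^q(\{c_k\})$ norms, so the uniform estimate forces $\ell^p \hookrightarrow \ell^q$. The bound on $a$ is obtained by testing against $g(y) = y^{-a-1/p}\chi_{[1,N]}(y)$ and letting $N \to \infty$: the right-hand side grows like a power of $\log N$, while for $x > N$ the inner integral $\int_0^x g$ is of order $x^{-a-1/p+1}$ (or $\log N$ at the borderline), forcing the left side to diverge unless $a < 1/p'$.

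The main obstacle is sharp bookkeeping at the endpoints. The $p = q' = 1$ relaxation in (a) is captured by the $\sigma = \infty$ branch of Young on the sufficiency side, and requires a slightly more delicate choice of test function on the necessity side; outside these endpoints, everything is clean, and the same analysis applied to the dual operator (or via $x \mapsto 1/x$) yields (b) with the mirrored condition $b > -1/q$.
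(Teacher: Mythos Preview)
The paper does not prove this lemma; it is stated with references to Bradley, Sinnamon--Stepanov, and \cite[Lemma~4.1]{NoSt}. Your argument is a correct self-contained proof. The exponential substitution turning the weighted Hardy operator into a convolution on $\RR$ and then invoking Young's inequality is a classical and clean route to sufficiency, and it captures the endpoint $p=q'=1$ exactly through the $\sigma=\infty$ branch. The reduction of (b) to (a) via $x\mapsto 1/x$ is also fine (one checks that the scaling relation is preserved and that the condition $a'<1/p'$ for the transformed operator becomes $b>-1/q$).

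One small slip in your necessity sketch for the bound on $a$: for $x>N$ the inner integral $\int_0^x g=\int_1^N y^{-a-1/p}\,dy$ is a constant in $x$ (depending on $N$), not ``of order $x^{-a-1/p+1}$''. What actually drives the contradiction is that this constant is positive while $\|x^{b}\|_{L^q((N,\infty))}=\infty$ as soon as $b\ge -1/q$ and $q<\infty$; the borderline $a=1/p'$, $q=\infty$, $p>1$ then needs the logarithmic comparison you allude to. In fact for $a>1/p'$ the single bump $g=\chi_{[1,2]}$ already suffices; your $N$-dependent test function is only genuinely needed at $a=1/p'$. With that wording corrected, the necessity argument goes through.
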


For further reference, we state the following conditions:
\begin{itemize}
\item[(C5)] $A < \frac{2\a+2}{p'} +\Big(\b + \frac{1}r-1\Big)\wedge 0 \quad 
	(\le \;\textrm{when}\; p=q'=1 \; \textrm{and}\; [\b=0\; \textrm{or}\; \b+\frac{1}r\neq 1])$,
\item[(C5a)] $A < \frac{2\a+2}{p'} +\b + \frac{1}r-1 \quad (\le \;\textrm{when}\; p=q'=1)$,
\item[(C5b)] $A < \frac{2\a+2}{p'} \quad (\le \;\textrm{when}\; p=q'=1)$,
\item[(C5c)] $A < \frac{2\a+2}{p'}$,
\item[(C6)] $B < \frac{2\a+2}{q} + \Big(\b + \frac{1}r-1\Big) \wedge 0 \quad 
	(\le \;\textrm{when}\; p=q'=1 \; \textrm{and}\; [\b=0\; \textrm{or}\; \b+\frac{1}r\neq 1])$,
\item[(C6a)] $B < \frac{2\a+2}{q} + \b + \frac{1}r-1 \quad (\le \;\textrm{when}\; p=q'=1)$,
\item[(C6b)] $B < \frac{2\a+2}{q} \quad (\le \;\textrm{when}\; p=q'=1)$,
\item[(C6c)] $B < \frac{2\a+2}{q}$.
\end{itemize}
Notice that (C5) and (C6) together are equivalent to (C3) from Theorem \ref{thm:K}.

\noindent \textbf{Analysis of} $\boldsymbol{H_0^{\b+1/r-1}}$.
Substituting $f(z) = z^{-2\a-\b-1/r}g(z)$ we see that the estimate
$$
\big\|x^{-B} H_0^{\b+1/r-1}f\big\|_{L^q(d\mu_{\a})} \lesssim \|x^{A}f\|_{L^p(d\mu_{\a})}
$$
is equivalent to
$$
\bigg\| x^{-2\a-\b+\rho/r-1-B+(2\a+1)/q}\int_0^x g(z)\, dz \bigg\|_{L^q(\mathbb{R}_+,dx)} \lesssim
\big\| x^{-2\a-\b-1/r+A+(2\a+1)/p}g\big\|_{L^p(\mathbb{R}_+,dx)}.
$$
By Lemma \ref{lem:Har} (a), this holds if and only if (C1), (C2) and (C5a) hold simultaneously.

\noindent \textbf{Analysis of} $\boldsymbol{H_{\infty}^{\b+1/r-1}}$.
Substituting $f(z) = z^{\b-\rho/r}g(z)$ we can write the estimate
$$
\big\|x^{-B} H_{\infty}^{\b+1/r-1}f\big\|_{L^q(d\mu_{\a})} \lesssim \|x^{A}f\|_{L^p(d\mu_{\a})}
$$
in the equivalent form
$$
\bigg\| x^{\b+1/r-1-B+(2\a+1)/q}\int_x^{\infty} g(z)\, dz \bigg\|_{L^q(\mathbb{R}_+,dx)} \lesssim
\big\| x^{\b-\rho/r+A+(2\a+1)/p}g\big\|_{L^p(\mathbb{R}_+,dx)}.
$$
Applying Lemma \ref{lem:Har} (b), we see that this holds if and only if (C1), (C2) and (C6a)
hold simultaneously.

Clearly, this analysis of $H_0^{\b+1/r-1}$ and $H_{\infty}^{\b+1/r-1}$ is valid for any real
$\eta = \b + 1/r -1$.

\noindent \textbf{Analysis of} $\boldsymbol{H_0^{0}}$ \textbf{and} $\boldsymbol{H_{\infty}^{0}}$.
This is a special case of the above, so we infer that
$$
\big\|x^{-B} H_0^{0}f\big\|_{L^q(d\mu_{\a})} \lesssim \|x^{A}f\|_{L^p(d\mu_{\a})}
$$
holds if and only if (C1), (C2) and (C5b) are satisfied. Further,
$$
\big\|x^{-B} H_{\infty}^{0}f\big\|_{L^q(d\mu_{\a})} \lesssim \|x^{A}f\|_{L^p(d\mu_{\a})}
$$
holds if and only if (C1), (C2) and (C6b) are satisfied.

\noindent \textbf{Analysis of} $\boldsymbol{H_0^{\log}}$ \textbf{and} $\boldsymbol{H_{\infty}^{\log}}$.
We observe that, given any $\eta < 0$,
\begin{equation} \label{log_c}
H_0^{0}f \lesssim H_0^{\log}f \lesssim H_0^{\eta}f \quad \textrm{and} \quad
H^0_{\infty}f \lesssim H_{\infty}^{\log}f \lesssim H_{\infty}^{\eta}f, \qquad f \ge 0.
\end{equation}
This implies that conditions for boundedness of the logarithmic operators are (C1), (C2) and either (C5a)
or (C6a), with $\eta = \b+1/r-1 =0$, but excluding the case $p=q'=1$ in which weak inequality appears
in (C5a) and (C6a).
Thus, assuming for a moment that $(p,q) \neq (1,\infty)$, we see that
\begin{equation} \label{log1}
\big\|x^{-B} H_0^{\log}f\big\|_{L^q(d\mu_{\a})} \lesssim \|x^{A}f\|_{L^p(d\mu_{\a})}
\end{equation}
holds if and only if (C1), (C2) and (C5c) are satisfied, and similarly
\begin{equation} \label{log2}
\big\|x^{-B} H_{\infty}^{\log}f\big\|_{L^q(d\mu_{\a})} \lesssim \|x^{A}f\|_{L^p(d\mu_{\a})}
\end{equation}
holds if and only if (C1), (C2) and (C6c) are satisfied.

The remaining case requires further treatment.
Assuming (C1), (C2) and that $(p,q) = (1,\infty)$, it is easily seen directly that \eqref{log1} does not
hold when $A = \frac{2\a+2}{p'}=0$ and, similarly, \eqref{log2} is not true if $B = \frac{2\a+2}q=0$.
So, in general, here the boundedness conditions are (C1), (C2), and either (C5c) or (C6c), respectively.

\noindent \textbf{Analysis of} $\boldsymbol{T}$ \textbf{in case} $\boldsymbol{\frac{\rho+1}r < 1}$
\textbf{and} $\boldsymbol{S}$ \textbf{in case} $\boldsymbol{\frac{\rho+1}r = 1}$.
Here we may assume that (C1) and (C2) are satisfied, and $\rho > -1$.
We can then invoke the analysis of $T$ and $S$ performed
in \cite[Section 4.1]{NoSt}, with the quantity $\frac{\rho+1}{2r}$
playing the role of $\sigma$ from \cite{NoSt}.
To be precise, here the bound $\frac{\rho+1}{2r} < \a +1$ may not be satisfied, but
that does not affect the arguments in question. Thus the conclusion is that
(C4) from Theorem \ref{thm:K} is necessary and sufficient
(under the assumptions made) both for $T$ and $S$, separately,
to be bounded from $L^p(x^{Ap}d\mu_{\a})$ to $L^q(x^{-Bq}d\mu_{\a})$.

We can finally prove the theorem.
\begin{proof}[{Proof of Theorem \ref{thm:K}}]
To prove (i), we proceed as in the proof \cite[Theorem 2.5(i)]{NoSt}. From there, we know that
$T$ is well defined on $L^p(x^{Ap}d\mu_{\a})$ whenever $\sigma = \frac{\rho+1}{2r}>0$,
and $S$ is always well defined on $L^p(x^{Ap}d\mu_{\a})$. So one has to look at
the Hardy type operators $H_0^{\eta}, H_{\infty}^{\eta}, H_0^{\log}, H_{\infty}^{\log}$.

Arguing as in \cite{NoSt} we find that the condition
$$
\frac{\rho+1}r - \frac{2\a+2}{p} - \eta < A < \frac{2\a+2}{p'} + \eta
\quad (\textrm{both}\; \le \; \textrm{if}\; p=1)
$$
is necessary and sufficient for the sum $H_0^{\eta}+ H_{\infty}^{\eta}$ to be well defined on
$L^p(x^{Ap}d\mu_{\a})$. Further, using \eqref{log_c}, we infer that the same condition with $\eta = 0$
is necessary
and sufficient for the sum $H_0^{\log}+H_{\infty}^{\log}$ to be well defined on $L^p(x^{Ap}d\mu_{\a})$,
but now without weakening the inequalities in case $p=1$. The latter is easily verified directly,
by means of suitable counterexamples. Summing up,
$$
H_0^{\b+1/r-1}+H_{\infty}^{\b+1/r-1} + \chi_{\{\b+1/r>1\}} (H_0^{0}+H_{\infty}^0)
	+ \chi_{\{\b+1/r=1,\b\neq 0\}}(H_0^{\log}+H_{\infty}^{\log})
$$
is well defined on $L^p(x^{Ap}d\mu_{\a})$ if and only if \eqref{Cincl} holds. The conclusion follows.

The proof of (ii) is essentially contained in the analysis of
$H_0^{\b+1/r-1}$, $H_{\infty}^{\b+1/r-1}$, $H_0^0$, $H_{\infty}^0$,
$H_0^{\log}$, $H_{\infty}^{\log}$, $T$, $S$
done above. Observe that (C5a), (C5b) when $\b+1/r > 1$ and
(C5c) when $\b+1/r=1$ and $\b \neq 0$ altogether are equivalent to (C5).
Analogous observation pertains to (C6a), (C6b), (C6c) and (C6).
\end{proof}

\begin{remark}
Perhaps a bit surprisingly,
the operator $K^{\a,\b}_{r,\rho}$ resembles much the potential operator related
to the modified Hankel transform,
see \cite[Section 2.1]{NoSt}. Even more, if $\sigma = \frac{\rho+1}{2r} < \a + 1$ and
[$\b + 1/r -1 > 0$ or $(\b,r)=(0,1)$],
then the kernel
$K_{r,\rho}^{\a,\b}(x,z)$ behaves exactly like the potential kernel $K^{\a,\sigma}(x,z)$ and one gets the
same boundedness results for the associated operators.
\end{remark}

We now comment on a shape of the set of all $1\le p,q \le \infty$ for which the estimate of
Theorem \ref{thm:K} holds. Actually, we are going to look at the corresponding set $D$ of pairs
$(\frac{1}p,\frac{1}q)$ in the closed unit square $[0,1]^2$. Then, depending on the parameters involved,
the following situations occur (and no others).
\begin{itemize}
\item[(S1)] $D$ is a segment in the lower triangle $\frac{1}p > \frac{1}q$, parallel to the diagonal
and with endpoints included and located on the boundary of the square.
\item[(S2)] $D$ is a subsegment of that from (S1)
	having excluded any endpoint not lying on the boundary of the square.
\item[(S3)] $D$ is a subsegment of the diagonal of $[0,1]^2$, of length strictly smaller than the diagonal,
	having excluded any endpoint not lying on the boundary of the square.
\item[(S4)] $D$ is just one point, the lower-right vertex of the square.
\item[(S5)] $D$ is empty.
\end{itemize}
All segments of types (S1)--(S3) indeed occur with suitable choices of the parameters.

\subsection{Main results}
By homogeneity of the kernel \eqref{homo} it follows that a power-weighted space $L^p(x^{Ap}d\mu_{\a})$
is included in $\domain M_t^{\a,\b}$ for a given $t>0$ if and only if
$L^p(x^{Ap}d\mu_{\a}) \subset \domain M_t^{\a,\b}$ for all $t>0$. Thus from Theorems \ref{thm:norm_est}
and \ref{thm:K} (i) we conclude that all the weighted $L^p(d\mu_{\a})$ spaces admitted in Theorem
\ref{thm:K} (i) are contained, under all the relevant assumptions on the parameters, in
$\domain M_t^{\a,\b}$, all $t>0$.

The main result of this paper is a straightforward consequence of Theorems \ref{thm:norm_est}
and \ref{thm:K}. It reads as follows.
\begin{thm} \label{thm:main}
Let $\a > -1$, $\b > -\a-1/2$, $1\le p,q \le \infty$, $1\le r < \infty$, $A,B,\rho \in \mathbb{R}$ and
assume that
$$
 \a + \b > \frac{1}2 - \frac{1}{r} \qquad \textrm{and} \qquad
	\Big[ \frac{\rho+1}{r} < 2\a +2 \;\; \textrm{if} \;\; -\b \notin \mathbb{N} \Big].
$$
Then, under the conditions (C1)--(C4), $L^p(x^{Ap}d\mu_{\a}) \subset \domain M_t^{\a,\b}$ for each $t>0$ and the estimate
$$
\Big\| \big\| M_t^{\a,\b}f(x)\big\|_{L^r(t^{\rho}dt)} x^{-B} \Big\|_{L^q(d\mu_{\a})} \lesssim
\big\| f(x) x^{A} \big\|_{L^p(d\mu_{\a})}
$$
holds uniformly in $f \in L^p(x^{Ap}d\mu_{\a})$.
\end{thm}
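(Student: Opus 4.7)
The plan is to chain together the two main ingredients already proved: the pointwise $L^r_t$-norm bound on the kernel (Theorem \ref{thm:norm_est}) and the two-weight estimate for the positive auxiliary operator $K^{\a,\b}_{r,\rho}$ (Theorem \ref{thm:K}(ii)), connecting them via Minkowski's integral inequality. The first pair of hypotheses in the statement are precisely the conditions \eqref{conds} required to invoke Theorem \ref{thm:norm_est}, while (C1)--(C4) are precisely those of Theorem \ref{thm:K}(ii), so both auxiliary theorems apply directly and no further verification of applicability is needed.

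Before writing the main estimate, I would verify $L^p(x^{Ap}d\mu_\a) \subset \domain M_t^{\a,\b}$ for every $t>0$. By the scaling relation \eqref{homo} and the discussion preceding the theorem, it suffices to check this for one value of $t$. The key observation is that (C2) together with (C3) implies the domain condition \eqref{Cincl} of Theorem \ref{thm:K}(i): the upper bound on $A$ in \eqref{Cincl} is exactly the $A$-part of (C3), while the lower bound on $A$, after substituting the exponent identity (C2), becomes the $B$-part of (C3); one also checks that the endpoint conventions are compatible, since the condition $p=q'=1$ forcing weak inequality in (C3) is strictly more restrictive than the condition $p=1$ forcing it in \eqref{Cincl}. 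Thus Theorem \ref{thm:K}(i) yields $L^p(x^{Ap}d\mu_\a) \subset \domain K_{r,\rho}^{\a,\b}$, which when combined with the pointwise majorization from Theorem \ref{thm:norm_est} and Minkowski's inequality forces $\int_0^\infty |K_t^{\a,\b}(x,z)|\,|f(z)|\,d\mu_\a(z) < \infty$ for a.e.\ $(x,t)$, hence for every $t>0$ by homogeneity.

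For the main estimate, I would write, for $f \in L^p(x^{Ap}d\mu_\a)$,
\begin{align*}
\bigl\|M_t^{\a,\b}f(x)\bigr\|_{L^r(t^\rho dt)}
&\leq \biggl\|\int_0^\infty \bigl|K_t^{\a,\b}(x,z)\bigr|\,|f(z)|\,d\mu_\a(z)\biggr\|_{L^r(t^\rho dt)} \\
&\leq \int_0^\infty \bigl\|K_t^{\a,\b}(x,z)\bigr\|_{L^r(t^\rho dt)}\, |f(z)|\,d\mu_\a(z) \\
&\lesssim \int_0^\infty K^{\a,\b}_{r,\rho}(x,z)\,|f(z)|\,d\mu_\a(z) = K^{\a,\b}_{r,\rho}|f|(x),
\end{align*}
where the middle step is Minkowski's integral inequality (legitimate since $r \geq 1$ and the integrand is nonnegative) and the last is Theorem \ref{thm:norm_est}. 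Taking $L^q(x^{-Bq}d\mu_\a)$ norms on both sides and invoking Theorem \ref{thm:K}(ii) under (C1)--(C4) yields the asserted bound. There is no real obstacle here; the entire argument is an assembly of the previously proved component results, and the only nontrivial point is the routine algebraic check linking (C2) and (C3) to \eqref{Cincl}, needed so that Theorem \ref{thm:K}(i) can legitimately supply the domain inclusion before the integral defining $M_t^{\a,\b}f$ can be manipulated.
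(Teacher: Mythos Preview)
Your proposal is correct and follows essentially the same approach as the paper, which states the result as a ``straightforward consequence of Theorems \ref{thm:norm_est} and \ref{thm:K}'' and handles the domain inclusion via the homogeneity remark immediately preceding the theorem. Your write-up is somewhat more detailed than the paper's: in particular, you spell out the Minkowski step and the algebraic check that (C2) and (C3) together imply the domain condition \eqref{Cincl} of Theorem \ref{thm:K}(i), which the paper leaves implicit.
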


\begin{remark} \label{rem:min}
The order of taking the norms in the mixed norm expression in Theorem \ref{thm:main} can be exchanged.
Indeed, in view of Minkowski's integral inequality, when $q \le r$,
$$
	\Big\| \big\| M_t^{\a,\b}f(x) x^{-B} \big\|_{L^q(d\mu_{\a})} \Big\|_{L^r(t^{\rho}dt)} \le
  \Big\| \big\| M_t^{\a,\b}f(x)\big\|_{L^r(t^{\rho}dt)} x^{-B} \Big\|_{L^q(d\mu_{\a})}.
$$
\end{remark}

From Theorem \ref{thm:main} we conclude immediately the following two-weight mixed norm estimate for the generalized
spherical mean Radon transform $M^{\b}$ in $\mathbb{R}^n$, $n \ge 1$.
\begin{coro} \label{cor:Rn}
Let $n \ge 1$. Then, under the assumptions and conditions of Theorem \ref{thm:main} on the parameters,
with $\a = n/2-1$, one has the estimate
$$
\Big\| \big\| M^{\b}f(x,t)\big\|_{L^r(t^{\rho}dt)} |x|^{-B} \Big\|_{L^q(\mathbb{R}^n,dx)}
\lesssim \big\| f(x) |x|^{A} \big\|_{L^p(\mathbb{R}^n,dx)}, 
	\qquad f \in L^p_{\textrm{rad}}(\mathbb{R}^n,|x|^{Ap}dx).
$$
Here $M^{\b}f(\cdot,t)$ is understood as the extension given by $M_t^{\a,\b}$ of this operator defined
initially on $L^2(\mathbb{R}^n,dx) \cap L^p_{\textrm{rad}}(\mathbb{R}^n,|x|^{Ap}dx)$ by means of the Fourier
transform.
\end{coro}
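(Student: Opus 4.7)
The plan is to reduce the $\mathbb{R}^n$ inequality to the one-dimensional Theorem \ref{thm:main} via the standard correspondence between radial functions on $\mathbb{R}^n$ and functions on $\mathbb{R}_+$ equipped with the measure $d\mu_{\a}$, with $\a = n/2 - 1$ (so that $2\a+1 = n-1$ and $d\mu_{\a}(r) = r^{n-1} dr$). The backbone is the polar-coordinates identity
$$
\int_{\mathbb{R}^n} F(|x|)\, dx = \omega_{n-1} \int_0^{\infty} F(r)\, d\mu_{\a}(r),
$$
valid for non-negative measurable $F$, where $\omega_{n-1}$ denotes the surface area of $S^{n-1}$. Applied to the right-hand side of the asserted inequality this yields $\|f(x)|x|^A\|_{L^p(\mathbb{R}^n,dx)} = \omega_{n-1}^{1/p} \|r^A \tilde f\|_{L^p(d\mu_{\a})}$, where $\tilde f$ is the radial profile of $f$; applied to the left-hand side, once radiality of $M^{\b}f(\cdot,t)$ in $x$ is established, it will similarly identify the $\mathbb{R}^n$ mixed norm with a constant multiple of the $d\mu_{\a}$ mixed norm of $M_t^{\a,\b}\tilde f$.

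The more substantial step is to verify that $M^{\b}$ acting on a radial $f$ coincides, after radialization, with $M_t^{\a,\b}$ acting on the profile $\tilde f$. For $f$ in the intersection $L^2(\mathbb{R}^n) \cap L^p_{\textrm{rad}}(\mathbb{R}^n,|x|^{Ap}dx)$ I would invoke the classical fact that, for $\a = n/2-1$, the Fourier transform on $\mathbb{R}^n$ restricted to radial $L^2$-functions corresponds, at the level of profiles, to the Hankel transform $\mathcal{H}_{\a}$. A direct inspection shows that with $\a = n/2-1$ the two multiplier functions match, $m_{\b}(s) = m_{\a,\b}(s)$, so the Fourier-multiplier definition of $M^{\b}f(\cdot,t)$ translates exactly into the Hankel-multiplier formula \eqref{Handef} for $\mathcal{M}_t^{\a,\b}\tilde f$. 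Corollary \ref{cor:L2} then identifies $\mathcal{M}_t^{\a,\b}$ with $M_t^{\a,\b}$ on $L^2(d\mu_{\a})$, which closes the identification on the dense intersection.

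With these two steps in place, Theorem \ref{thm:main} applied to $\tilde f$ produces the required mixed-norm bound for the profiles, which the polar-coordinates identity transports back to the desired estimate on $\mathbb{R}^n$, first for $f$ in the dense subset $L^2 \cap L^p_{\textrm{rad}}(\mathbb{R}^n,|x|^{Ap}dx)$. For general $f \in L^p_{\textrm{rad}}(\mathbb{R}^n,|x|^{Ap}dx)$ outside $L^2$, the corollary's own convention \emph{defines} $M^{\b}f(\cdot,t)$ through the extension dictated by $M_t^{\a,\b}$, so the bound is immediate from the profile estimate combined with the polar-coordinates formula. I do not anticipate any serious analytic obstacle; the only care required is bookkeeping of constants and verifying the identity $m_{\b} = m_{\a,\b}$ at $\a = n/2-1$, which is where the one-dimensional machinery connects to the genuinely higher-dimensional object $M^{\b}$.
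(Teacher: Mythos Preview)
Your proposal is correct and matches the paper's approach: the paper states that Corollary~\ref{cor:Rn} follows ``immediately'' from Theorem~\ref{thm:main}, and what you have written is precisely the standard unpacking of that claim via polar coordinates and the Fourier--Hankel correspondence for radial functions. There is nothing to add.
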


Theorem \ref{thm:main} and Corollary \ref{cor:Rn} are new results, and that even when specified to an unweighted
setting ($A=B=0$) or to non-generalized radial spherical means ($\b=0$). Moreover, given our strategy of proof, these
results are pretty precise (if not sharp, at least in some cases), in view of the sharpness statements in
Theorem \ref{thm:kerest2} (see also the relevant comment succeeding this theorem) and Theorem \ref{thm:norm_est},
and the optimal result contained in Theorem \ref{thm:K}.

%%%%%%%%%%%%%%%%%%%%%%%%%%%%%%%%%%%%%%%%%%%%%%%%%%%%%%%%%%%%%%
\section{Applications to some PDE problems: weighted Strichartz estimates} \label{sec:appl}
%%%%%%%%%%%%%%%%%%%%%%%%%%%%%%%%%%%%%%%%%%%%%%%%%%%%%%%%%%%%%%

Let $\a > -1$, $\b > -\a-1/2$ and assume that $u(x,t) = c M_t^{\a,\b}f(x)$,
$(x,t) \in \mathbb{R}_+\times \mathbb{R}_+$, is a (weak) solution to a PDE problem with $f$ entering an
initial value condition. Then Theorem \ref{thm:main} and Remark \ref{rem:min} imply a weighted
Strichartz type estimate
$$
\Big\| \big\| u(x,t) x^{-B} \big\|_{L^q(d\mu_{\a})} \Big\|_{L^r(t^{\rho}dt)} \lesssim
	\big\| f(x) x^A \big\|_{L^p(d\mu_{\a})},
$$
under the assumptions and conditions of Theorem \ref{thm:main} and provided that $q \le r$.
Moreover, if $u(x,t)=ct M^{\a,\b}f(x)$ is
the solution, then a similar Strichartz type estimate holds with the parameter $\rho$ and the
corresponding assumptions and constraints adjusted suitably.

When $\a=n/2-1$, $\b > -n/2+1/2$, $n=1,2,\ldots$, and $v(x,t) = c M_t^{\a,\b}f_0(|x|)$,
$(x,t) \in \mathbb{R}^n \times \mathbb{R}_+$ is a spatially radial (weak) solution to a PDE problem with
a radial $f=f_0(|\cdot|)$ entering an initial value condition, then Theorem \ref{thm:main} together with
Remark \ref{rem:min} establish the weighted Strichartz type estimate
$$
\Big\| \big\| v(x,t) |x|^{-B} \big\|_{L^q(\mathbb{R}^n,dx)} \Big\|_{L^r(t^{\rho}dt)} \lesssim
	\big\| f(x) |x|^A \big\|_{L^p(\mathbb{R}^n,dx)},
$$
provided that the parameters satisfy all the restrictions imposed by Theorem \ref{thm:main} and $q \le r$.
If $v(x,t) = ct M_t^{\a,\b}f_0(|x|)$ happens to be such a solution, then again one infers a Strichartz type
estimate by taking $\tilde{\rho} = {\rho} + r$ instead of $\rho$.

We now give examples of Cauchy initial value problems for several classical PDEs, where solutions
$u(x,t)$ or $v(x,t)$ of the above form indeed occur.
\begin{itemize}
\item[(I)] \textbf{Euler-Poisson-Darboux equation}.
Let $n \ge 1$ and $\Box_{\b}$ be the EPD operator related to $\mathbb{R}^n$,
$$
\Box_{\b}v = \Delta_x v - v_{tt} - \frac{n+2\b-1}{t} v_t.
$$
Consider the Cauchy problem in $\mathbb{R}^n\times \mathbb{R}_+$
\begin{equation} \label{EPD}
\Box_{\b} v =0, \qquad v(x,0) = f(x), \qquad v_t(x,0) = 0,
\end{equation}
with a radial initial position $f=f_0(|\cdot|)$. Then, assuming that $\b > -n/2+1/2$,
$$
v(x,t) = M_t^{n/2-1,\b}f_0(|x|)
$$
is a solution to the singular Cauchy problem \eqref{EPD}; see \cite{We,Brest,Rubin}.
Note that here the special case $\b = 2/3-n/2$ corresponds to the Tricomi equation.

\item[(II)] \textbf{Wave equation}.
Let $n \ge 1$ and observe that $\Box_{(1-n)/2}$ is the wave operator related to $\mathbb{R}^n$.
Consider the Cauchy problem in $\mathbb{R}^n\times \mathbb{R}_+$
\begin{equation} \label{wave}
\Box_{(1-n)/2} v = 0, \qquad v(x,0) = 0, \qquad v_t(x,0) = f(x),
\end{equation}
with radial initial speed $f=f_0(|\cdot|)$. Then, see \cite{Stein},
$$
v(x,t) = tM_t^{n/2-1,(3-n)/2}f_0(|x|)
$$
is a solution to \eqref{wave}.

\item[(III)] \textbf{Bessel EPD and wave equations}.
For $\a > -1$, let $L_{\a}$ be the one-dimensional Bessel operator
$$
L_{\a} = \frac{d^2}{dx^2} + \frac{2\a+1}{x}\frac{d}{dx}.
$$
When $\a = n/2-1$, $n=1,2,\ldots$, $L_{\a}$ is the radial part of the standard Laplacian in $\mathbb{R}^n$.
Let us consider the following differential problems in $\mathbb{R}_+\times \mathbb{R}_+$:
\begin{align} \label{BEPD}
L_{\a}u - u_{tt} - \frac{2\a+2\b+1}{t} u_t = 0, \qquad & u(x,0) = f(x), \qquad u_t(x,0)=0, \\
L_{\a}u - u_{tt} = 0, \qquad & u(x,0)=0, \qquad u_t(x,0) = f(x). \label{Bwave}
\end{align}
Then, for $\b > -\a-1/2$,
$$
u(x,t) = M_t^{\a,\b}f(x)
$$
is a solution to \eqref{BEPD}, while
$$
u(x,t) = tM_t^{\a,-\a+1/2}f(x)
$$
is a solution to \eqref{Bwave}. Observe that the Bessel EPD operator appearing in \eqref{BEPD} is in fact
a difference of two Bessel operators, one of them acting on the spatial variable, the other one on the time variable.
\end{itemize}

It is worth pointing out that solutions to \eqref{wave} and \eqref{Bwave} with the initial conditions
reversed, i.e.\ when the initial speed is zero and the initial position is prescribed, express as
\begin{align*}
v(x,t) & = \mathcal{M}_t^{n/2-1,(1-n)/2}f(x),\\
u(x,t) & = t\mathcal{M}_t^{\a,-\a-1/2}f(x).
\end{align*}
The parameters here, however, correspond to the critical line $\a+\b=-1/2$ where $\mathcal{M}_t^{\a,\b}$
becomes a singular integral, the more subtle case that is not treated in this paper.

Another comment concerns connections of the above mentioned solutions with initial positions/speeds.
More precisely, the question is in what sense the solutions converge to initial conditions as
time decreases to $0$, and for what ranges of the parameters the convergence takes place.
In general, without requiring much regularity of the initial data, this is a difficult question that
requires studying time-maximal operators. We refer
to \cite{Stein,MuSe,CoCoSt,DuoMoyuaOrue2} for some partial results.

%%%%%%%%%%%%%%%%%%%%%%   REFERENCES   %%%%%%%%%%%%%%%%%%%%%%%%

%%%%%%%%%%%%%%%%%%%%%%%%%%%%%%%%%%%%%%%%%%%%%%%%%%%%%%%%%%%%%%

\end{document}